\newcommand*{\zalmeng}[1]{\mathbb{#1}}
\newcommand*{\NZ}[0]{\zalmeng{N}}
\newcommand*{\RZ}[0]{\zalmeng{R}}
\newcommand*{\CZ}[0]{\zalmeng{C}}
\newcommand*{\Kug}[2]{\mathbb{B}_{#1}^{#2}}
\newcommand*{\nKug}[2]{\mathbb{D}_{#1}^{#2}}
\newcommand*{\Sph}[2]{\mathbb{S}_{#1}^{#2}}
\newcommand*{\ez}[0]{\mathsf{e}}
\newcommand*{\dif}[0]{\mathrm{d}}
\newcommand*{\kompl}[1]{#1^{\mathsf{c}}}
\newcommand*{\Konv}[1]{\xrightarrow[#1]{}}
\newcommand*{\Kfs}[1]{\xrightarrow[#1]{\text{\upshape a.s.}}}
\newcommand*{\KiWsk}[1]{\xrightarrow[#1]{\mathbb{P}}}
\newcommand*{\KiVert}[1]{\xrightarrow[#1]{\text{\upshape d}}}
\newcommand*{\GlVert}[0]{\stackrel{\text{\upshape d}}{=}}
\newcommand*{\ellpe}[2]{\ell_{#1}^{#2}}
\newcommand*{\KugVol}[2]{\omega_{#1}^{#2}}
\newcommand*{\KegM}[2]{\kappa_{#1}^{#2}}
\newcommand*{\Eta}[0]{\mathrm{H}}
\newcommand*{\trapo}[1]{#1^{\mathsf{T}}}
\newcommand*{\inv}[1]{#1^{-1}}
\newcommand*{\Colon}[0]{\,:\,} 
\newcommand*{\vek}[1]{\mathbf{#1}}
\newlength{\absatz}
\newcommand*{\Absatz}[0]{\hspace*{\absatz}}
\DeclareMathOperator{\chF}{\mathbbm{1}}
\DeclareMathOperator{\vol}{\mathit{v}}
\DeclareMathOperator{\Unif}{Unif}
\DeclareMathOperator{\Nvert}{\mathcal{N}}
\DeclareMathOperator{\Gaussp}{\gamma}
\DeclareMathOperator{\Exp}{\mathcal{E}}
\DeclareMathOperator{\Wsk}{\mathbb{P}}
\DeclareMathOperator{\Erw}{\mathbb{E}}
\DeclareMathOperator{\Var}{Var}
\DeclareMathOperator{\Cov}{Cov}
\DeclareMathOperator{\CVar}{Cov}
\DeclareMathOperator{\diag}{diag}
\DeclareMathOperator{\Lip}{Lip}
\DeclareMathOperator{\LipB}{Lip_{\text{\upshape b}}}
\DeclareMathOperator{\Masz}{\mathcal{M}}
\DeclareMathOperator{\Vertl}{\mathcal{L}}
\DeclareMathOperator{\Borel}{\mathcal{B}}
\DeclareMathOperator{\BigO}{\mathcal{O}}
\DeclareMathOperator{\smallO}{\mathit{o}}
\DeclareMathOperator{\BigTheta}{\Theta}
\DeclareMathOperator{\dLP}{\mathit{d}_{\text{\upshape LP}}}
\DeclareMathOperator{\dTV}{\mathit{d}_{\text{\upshape TV}}}
\theoremstyle{plain}
\newtheorem{Sa}{Proposition}
\newtheorem{Kor}[Sa]{Corollary}
\newtheorem{Lem}[Sa]{Lemma}
\newtheorem{thmalpha}{Theorem}
\theoremstyle{definition}
\theoremstyle{remark}
\newtheorem{Bem}[Sa]{Remark}
\renewcommand{\@upn}{} 
\begin{document}

\title{\textbf{Limit theorems for mixed\-/norm sequence spaces\protect\\with applications to volume distribution}}

\medskip

\author{Michael L.\ Juhos, Zakhar Kabluchko, and Joscha Prochno}

\date{}

\maketitle

\begin{abstract}
\small
Let \(p, q \in (0, \infty]\) and \(\ell_p^m(\ell_q^n)\) be the mixed-norm sequence space of real matrices \(x = (x_{i, j})_{i \leq m, j \leq n}\) endowed with the (quasi-)norm \(\lVert x \rVert_{p, q} := \bigl\lVert \bigl( \lVert (x_{i, j})_{j \leq n} \rVert_q \bigr)_{i \leq m} \bigr\rVert_p\). We shall prove a Poincar\'e--Maxwell--Borel lemma for suitably scaled matrices chosen uniformly at random in the \(\ell_p^m(\ell_q^n)\)-unit balls \(\mathbb{B}_{p, q}^{m, n}\), and obtain both central and non-central limit theorems for their \(\ell_p(\ell_q)\)-norms. We use those limit theorems to study the asymptotic volume distribution in the intersection of two mixed-norm sequence balls. Our approach is based on a new probabilistic representation of the uniform distribution on \(\mathbb{B}_{p, q}^{m, n}\).

\vspace{0.5\baselineskip}
\noindent\textbf{Keywords}. {Central limit theorem, law of large numbers, Poincar\'e\--Maxwell\--Borel lemma, threshold phenomenon}\\
\textbf{MSC}. Primary~52A23, 60F05; Secondary~46B06, 60D05
\end{abstract}

\tableofcontents

\section{Introduction and main results}

The asymptotic theory of convex bodies is intimately linked to probability theory whose methods and ideas have been key elements in obtaining numerous deep results of both analytic and geometric flavour. It has led to the development of a quite powerful quantitative methodology in geometric functional analysis and allowed to form a qualitatively new picture of high-dimensional spaces and structures. The role of convexity in high\-/dimensional spaces is similar to the role of independence in probability and guarantees a certain regularity of the otherwise complex structure of a high-dimensional space. One of the most classical results of stochastic-geometric and high\-/dimensional flavor is probably the Poincar\'e\--Maxwell\--Borel lemma, which asserts that any fixed number of coordinates of a vector chosen uniform at random from the boundary of the unit Euclidean ball \(\Kug{2}{n}\) is approximately Gaussian (see, e.g., \cite{DF1987}), and in the more modern spirit there is the pioneering work of V.\,D.\ Milman on the concentration\-/of\-/measure phenomenon, which has led to several major breakthroughs (see, e.g., \cite{AGM2015, MS1986}). The arguably most prominent example of the last two decades is Klartag's central limit theorem for convex bodies, showing that the marginals of a high\-/dimensional isotropic and log\-/concave random vector are approximately Gaussian distributed \cite{K2007}.
Besides Klartag's central limit theorem for convex bodies, a number of other (weak) limit theorems have been obtained for various geometric quantities in the last decades, demonstrating their regularity and universality; we refer to the survey \cite{PTT2020} for references. Several of those results have led to a deeper understanding of the volume distribution in high\-/dimensional convex bodies.

The motivation of the present paper is essentially twofold and will be elaborated upon in view of classical and preceding works before presenting our main results.

\textit{Motivation 1: Poincar\'e\--Maxwell\--Borel type results.} Having its roots in kinetic gas theory, and going back to Maxwell and later Poincaré and Borel, it is observed that the first \(k\) coordinates of a random point on the \((n - 1)\)\=/dimensional Euclidean sphere \(\Sph{2}{n - 1}\) are asymptotically independent and Gaussian as \(n\) tends to infinity; to be precise,
\begin{equation*}
\lim_{n \to \infty} \dTV\bigl( \Vertl(\sqrt{n} (X_i^n)_{i \leq k}), \Vertl((Z_i)_{i \leq k}) \bigr) = 0,
\end{equation*}
where \(\dTV\) denotes the total variation distance,  \((X_i^n)_{i \leq n}\) is sampled uniformly from \(\Sph{2}{n - 1}\) and \(Z_1, \dotsc, Z_k\) are independent standard Gaussian variables, and \(k \in \NZ\) is fixed. We refer to Diaconis and Freedman~\cite[Section~6]{DF1987} for a more detailed account and give a more detailed statement in Proposition~\ref{prop:mpb} below. In~\cite{DF1987}, Diaconis and Freedman prove an analogous result for the simplex and exponential distribution. Generalizations to the \(\ellpe{p}{}\)\=/sphere were obtained by Mogul\('\)ski\u{\i}~\cite{Mog1991}, where the point was distributed according to the normalized Hausdorff measure, and by Rachev and R\"uschendorf~\cite{RR1991} for the cone probability measure. The latter authors exploited a probabilistic representation relating a \(p\)\=/generalized Gaussian distribution to the \(\ellpe{p}{}\)\=/balls, allowing one to make a transition from a random vector with dependent coordinates to one with independent ones. Naor and Romik~\cite{NR2003} showed that the normalized Hausdorff measure and the cone probability measure are asymptotically equal (their equality for \(p \in \{1, 2, \infty\}\) irrespective of dimension being long known prior), thereby unifying the previous results. A further generalization to Orlicz balls (and even beyond) was undertaken recently by Johnston and Prochno~\cite{JP2020}. We stress that all results cited have been proved for the total variation distance of probability measures. In the present article we only consider the weak topology on probability measures (equivalently: convergence in distribution of random variables).

\textit{Motivation 2: Schechtman\--Schmuckenschl\"ager type results.} Instigated by a question of V.\,D.\ Milman, Schechtman and Zinn~\cite{SchechtmanZinn} found an upper bound on the volume left over from an \(\ellpe{p}{}\)\=/ball after cutting out a dilated \(\ellpe{q}{}\)\=/ball; incidentally the authors utilized the same stochastic representation as did Rachev and R\"uschendorf (see above). A few years after, Schechtman and Schmuckenschl\"ager~\cite{SS1991} used that probabilistic representation in order to investigate the limit of the volume of the cut\-/out portion in the very same setting as before, revealing the following threshold behaviour: below a certain critical dilation factor depending only on \(p\) and \(q\) the limit is zero, and above that it is one, provided the \(\ellpe{p}{}\)\=/ball has unit volume. More formally, writing \(\nKug{p}{n}\) for the \(n\)\=/dimensional unit\-/volume \(\ellpe{p}{}\)\=/ball,
\begin{equation*}
\lim_{n \to \infty}\vol_n(\nKug{p}{n} \cap t \nKug{q}{n}) =%
\begin{cases}
0 & \text{if } t A_{p, q} < 1,\\
1 & \text{if } t A_{p, q} > 1.
\end{cases}
\end{equation*}
About a decade later, Schmuckenschl\"ager~\cite{Schmu1998, Schmu2001} determined the asymptotics at the threshold itself and found the limit to be \(1/2\) by proving a central limit theorem that revealed this behaviour. We refer to Proposition~\ref{prop:ss} below for the precise statement. More recently, Kabluchko, Prochno, and Th\"ale~\cite{KPT2019_I, KPT2019_II} revisited the results of Schechtman and Schmuckenschl\"ager, providing a unified framework and also generalizing the previous works in various directions, yet still treating \(\ellpe{p}{}\)\=/balls and using the probabilistic representation. A further step was taken by Kabluchko and Prochno~\cite{KP2021}, studying the intersections of Orlicz balls and observing a similar thresholding  behaviour; here much finer tools from large deviations theory and statistical mechanics where required, and it is not even known whether the limit at the threshold itself exists. Another generalization from \(\ellpe{p}{}\)\=/balls to \(\ellpe{p}{}\)\=/ellipsoids, i.e., axis\-/parallel\-/scaled balls (a case not covered by Orlicz balls), was recently obtained by Juhos and Prochno in~\cite{JuP2022}; the phenomenon of the threshold emerges again. The case of intersections of unit balls from classical random matrix ensembles has been treated by Kabluchko, Prochno, and Th\"ale in \cite{KPT2020}. Let us point out that understanding the asymptotic volume of intersections of scaled unit balls naturally appears, for instance, when studying the curse of dimensionality for high\-/dimensional numerical integration problems \cite{HPU2019}.

Suspecting a universal behaviour among symmetric convex bodies, we tackle another generalization, namely finite\-/dimensional sequence spaces with mixed \(\ellpe{p}{}\)\=/norms, and consider the asymptotic volume of the intersection of two balls: the thresholding behaviour is found to be valid also in this case, and for a wide range of parameters the limit in the critical case is determined; little surprisingly, owing to the larger set of parameters as compared to the \(\ellpe{p}{}\)\=/balls, this limit's value is much more varied and the overall analysis is considerably more delicate.

Let us point out that the study of mixed\-/norm spaces is a classical one in approximation theory and geometric functional analysis and we refer, for instance, to the work of Sch\"utt regarding the symmetric basis constant of these spaces \cite{Sch1981}, the characterization of mixed\-/norm subspaces of $L_1$ by Prochno and Sch\"utt \cite{PSch2012} and Schechtman \cite{Sche2013}, the work on non\-/existence of greedy bases for the mixed\-/norm spaces by Schechtman \cite{Sche2014} and the study of volumetric properties of these spaces by Kempka and Vyb\'iral \cite{KV2017} as well as the recent work of Mayer and Ullrich on the order of entropy numbers of mixed\-/norm unit balls \cite{MU2021}.

We would like to add that, naturally, it would be interesting to consider even more general norms. The main hindrance, though, is that each of the results referenced in the motivation above, and others more, has required tools tailored to the specific problems; to our best knowledge there is no unified theory yet that would allow us to assess such questions ``in one fell swoop.'' Current research is conducted, e.g., for Schatten norms of not necessarily square matrices.

\subsubsection*{The mathematical setup}

In order to be able to present our main results, we shall briefly introduce the most essential setup; more details can be found in Section~\ref{sec:notation} on notation and preliminaries.

For \(p, q \in (0, \infty]\) and \(m, n \in \NZ\) define the finite\-/dimensional mixed\-/norm sequence space \(\ellpe{p}{m}(\ellpe{q}{n})\) to be the space \(\RZ^{m \times n}\) endowed with the \((m \cdot n)\)\-/dimensional Lebesgue measure \(\vol_{m n}\) and the quasinorm
\begin{equation*}
\lVert x \rVert_{p, q} := \bigl\lVert \bigl( \lVert (x_{i, j})_{j \leq n} \rVert_q \bigr)_{i \leq m} \bigr\rVert_p,
\end{equation*}
where \(x = (x_{i, j})_{i \leq m, j \leq n} \in \RZ^{m \times n}\), and \(\lVert \cdot \rVert_p\) is the usual \(\ellpe{p}{}\)\=/norm, that is,
\begin{equation*}
\lVert (x_i)_{i \leq n} \rVert_p :=%
\begin{cases}
\bigl( \sum\limits_{i = 1}^n \lvert x_i \rvert^p \bigr)^{1/p} & \text{if } p < \infty,\\
\max\limits_{i \leq n} \lvert x_i \rvert & \text{if } p = \infty.
\end{cases}
\end{equation*}
In particular, we consider the unit balls
\begin{equation*}
\Kug{p, q}{m, n} := \big\{x \in \RZ^{m \times n} \Colon \lVert x \rVert_{p, q} \leq 1\big\};
\end{equation*}
the \(\ellpe{p}{}\)\-/unit ball and sphere in \(\RZ^n\) are written \(\Kug{p}{n}\) and \(\Sph{p}{n - 1}\), respectively; \(\KugVol{p}{n}\) denotes the volume of \(\Kug{p}{n}\).

We seek to characterize \(\Unif(\Kug{p, q}{m, n})\), the uniform distribution on \(\Kug{p, q}{m, n}\). Given a random matrix \(X = (X_{i, j})_{i \leq m, j \leq n} \sim \Unif(\Kug{p, q}{m, n})\), define
\begin{equation}\label{eq:r_theta}
R_i := \lVert (X_{i, j})_{j \leq n} \rVert_q \quad \text{and} \quad \Theta_i := (\Theta_{i, j})_{j \leq n} := \Bigl( \frac{X_{i, j}}{R_i} \Bigr)_{j \leq n} \quad \text{for } i \in [1, m];
\end{equation}
then clearly \((R_i)_{i \leq m} \in \Kug{p}{m} \cap [0, \infty)^m\),  \(\Theta_i\) is almost surely well-defined and \(\Theta_i \in \Sph{q}{n - 1}\).

The notations \(R_i\) and \(\Theta_i\), \(\Theta_{i, j}\) are used throughout this article with the meaning given in~\eqref{eq:r_theta}; note that they actually depend on the parameters \(p, q, m, n\), but we suppress this in our notation.

For \(p \in (0, \infty]\) the \emph{\(p\)\=/generalized Gaussian distribution,} or \emph{\(p\)\=/Gaussian distribution} for short, is defined to be the probability measure on \(\RZ\) with Lebesgue\-/density
\begin{equation*}
x \mapsto \begin{cases}
\frac{1}{2 p^{1/p} \, \Gamma(\frac{1}{p} + 1)} \, \ez^{-\lvert x \rvert^p/p} & \text{if } p < \infty,\\
\frac{1}{2} \chF_{[-1, 1]}(x) & \text{if } p = \infty.
\end{cases}
\end{equation*}

\subsection{Main results\---a Schechtman\--Zinn probabilistic representation}

The first main result, which facilitates all computations and is essential to our proofs, is a probabilistic representation of the uniform distribution on \(\Kug{p, q}{m, n}\), generalizing the classical result of Schechtman and Zinn~\cite{SchechtmanZinn} and Rachev and R\"uschendorf~\cite{RR1991}. Given the numerous applications of the classical probabilistic representation, the following result clearly is of independent interest.

\begin{Sa}\label{sa:stoch_darst}
Let \(p, q \in (0, \infty]\) and \(m, n \in \NZ\), and let \(X \sim \Unif(\Kug{p, q}{m, n})\).
\begin{asparaenum}[(a)]
\item The distribution of \((R_i)_{i \leq m}\) has Lebesgue-density
\begin{equation*}
f_{R_1, \dotsc, R_m}(r_1, \dotsc, r_m) = \frac{(2 n)^m}{\KugVol{p/n}{m}}\prod_{i = 1}^m r_i^{n - 1} \cdot \chF_{\Kug{p}{m} \cap [0, \infty)^m}(r_1, \dotsc, r_m).
\end{equation*}
Therefore, \((R_i)_{i \leq m}\) can be represented as
\begin{equation*}
(R_i)_{i \leq m} \GlVert
\begin{cases} U^{1/(m n)} \, \bigl( \frac{\lvert \xi_i \rvert^{1/n}}{(\sum_{k = 1}^m \lvert \xi_k \rvert^{p/n})^{1/p}} \bigr)_{i \leq m} & \text{if } p < \infty,\\
(\lvert \xi_i \rvert^{1/n})_{i \leq m} & \text{if } p = \infty,
\end{cases}
\end{equation*}
where \(U, \xi_1, \dotsc, \xi_m\) are independent random variables with \(U\) distributed uniformly on \([0, 1]\), and \(\xi_1, \dotsc, \xi_m\) are \(\frac{p}{n}\)\=/Gaussian.
\item The random vectors \((R_i)_{i \leq m}, \Theta_1, \dotsc, \Theta_m\) are all independent, each \(\Theta_i\) is distributed according to the cone measure on \(\Sph{q}{n - 1}\), for \(i \in [1, m]\), and therefore can be represented as
\begin{equation*}
\Theta_i \GlVert \Bigl( \frac{\eta_{i, j}}{\lVert (\eta_{i, l})_{l \leq n} \rVert_q} \Bigr)_{j \leq n},
\end{equation*}
where \((\eta_{i, j})_{i \leq m, j \leq n}\) is an array of independent \(q\)\=/Gaussian random variables.
\item The components \(X_{i, j}\) of \(X\) have the representation
\begin{equation}\label{eq:stoch_darst}
\begin{split}
X_{i, j} &= R_i \Theta_{i, j}\\
&\GlVert \begin{cases}
U^{1/(m n)} \frac{\lvert \xi_i \rvert^{1/n}}{(\sum_{k = 1}^m \lvert \xi_k \rvert^{p/n})^{1/p}} \, \frac{\eta_{i, j}}{\lVert (\eta_{i, l})_{l \leq n} \rVert_q} & \text{if } p < \infty,\\
\lvert \xi_i \rvert^{1/n} \, \frac{\eta_{i, j}}{\lVert (\eta_{i, l})_{l \leq n} \rVert_q} & \text{if } p = \infty,
\end{cases}
\end{split}
\end{equation}
where \(U, \xi_1, \dotsc, \xi_m, \eta_{1, 1}, \dotsc, \eta_{m, n}\) are as before.
\end{asparaenum}
\end{Sa}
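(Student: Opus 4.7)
The plan is to perform, row by row, the standard polar decomposition with respect to the \(\ell_q\)\=/norm, rewrite the uniform distribution on \(\Kug{p, q}{m, n}\) in those coordinates, and then recognise the radial marginal as a uniform distribution on a lower\-/dimensional quasi\-/ball to which the classical Schechtman\--Zinn representation applies. Concretely, I would start from the \(\ell_q\)\=/polar integration formula
\[
\int_{\RZ^n} f(y)\,\dif y \;=\; n\,\KugVol{q}{n}\int_0^\infty r^{n - 1}\int_{\Sph{q}{n - 1}} f(r\theta)\,\dif\mu^q(\theta)\,\dif r,
\]
where \(\mu^q\) denotes the cone probability measure on \(\Sph{q}{n - 1}\). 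Applying this to each of the \(m\) rows of \(X\), and using that the mixed\-/norm constraint \(\lVert X \rVert_{p, q} \le 1\) depends only on \((R_i)_{i \le m}\) and reads \(\lVert (R_i)_{i \le m} \rVert_p \le 1\), I obtain that, in the coordinates \(\bigl((R_i)_{i \le m}, (\Theta_i)_{i \le m}\bigr)\), Lebesgue measure on \(\Kug{p, q}{m, n}\) factors as
\[
\bigl(n\,\KugVol{q}{n}\bigr)^m\prod_{i = 1}^m r_i^{n - 1}\,\chF_{\Kug{p}{m} \cap [0, \infty)^m}(r)\,\dif r_1 \dotsm \dif r_m \, \bigotimes_{i = 1}^m \dif\mu^q(\theta_i).
\]
Since the \(\theta_i\) do not appear in the Jacobian factor, this factorisation immediately yields part~(b): the radii are independent of the angles, the \(\Theta_i\) are mutually independent, and each \(\Theta_i\) is cone\-/measure distributed on \(\Sph{q}{n - 1}\). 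The stochastic representation of \(\Theta_i\) via \(q\)\=/Gaussians \((\eta_{i, j})_{j \le n}\) is then the well\-/known Schechtman\--Zinn/Rachev\--R\"uschendorf representation of the cone measure.

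To handle part~(a), I marginalise out the angles and compute the normalising constant \(\vol_{m n}(\Kug{p, q}{m, n})\) from the same factorisation. The substitution \(s_i = r_i^n\) transforms the radial integral \(\int_{\Kug{p}{m} \cap [0, \infty)^m} \prod r_i^{n - 1}\,\dif r\) into \(n^{-m}\,\vol_m(\Kug{p/n}{m} \cap [0, \infty)^m) = \KugVol{p/n}{m}/(2 n)^m\), so the full volume equals \(\KugVol{q}{n}^m \KugVol{p/n}{m}/2^m\); as a sanity check, for \(p = q\) this collapses to \(\KugVol{p}{m n}\) via the Dirichlet/Gamma\-/function identity \(\Gamma(m n /p + 1) = \Gamma(n / p + 1)^m \Gamma(m n / p + 1)/\Gamma(n / p + 1)^m\). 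Substituting back produces the claimed density \((2 n)^m / \KugVol{p/n}{m} \cdot \prod r_i^{n - 1} \chF_{\Kug{p}{m} \cap [0, \infty)^m}(r)\).

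For the stochastic representation of \((R_i)_{i \le m}\), the key observation is that, under the very same substitution, \((R_i^n)_{i \le m}\) is uniformly distributed on the positive orthant of \(\Kug{p/n}{m}\). If \(p < \infty\), applying the classical Schechtman\--Zinn representation to \(\Unif(\Kug{p/n}{m})\) with \((p/n)\)\=/Gaussian variables \(\xi_1, \dotsc, \xi_m\) and an independent \(U \sim \Unif([0, 1])\), and then extracting the \(n\)\=/th root, yields the displayed formula. The case \(p = \infty\) degenerates: \(\Kug{\infty}{m} \cap [0, \infty)^m = [0, 1]^m\), so \((R_i^n)_{i \le m}\) consists of independent \(\Unif([0, 1])\) variables, which coincide in law with \(\lvert \xi_i \rvert\) for independent \(\infty\)\=/Gaussians, and taking \(n\)\=/th roots gives the claim. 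Part~(c) is then immediate from \(X_{i, j} = R_i \Theta_{i, j}\) together with the independence established in~(b).

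The main obstacle will not be conceptual\---the strategy is the direct mixed\-/norm analogue of the original Schechtman\--Zinn construction\---but bookkeeping: I have to track the shift of the Gaussian parameter from \(p\) to \(p/n\) induced by the \(n\)\=/th power substitution on the radii, and make sure that the normalising factors \(\KugVol{p/n}{m}\), \((2 n)^m\), and \(\KugVol{q}{n}^m\) cancel correctly in both the density and the volume computation, uniformly in the two boundary cases \(p = \infty\) and \(q = \infty\).
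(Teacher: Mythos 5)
Your proposal follows essentially the same route as the paper's proof: row\-/wise \(\ellpe{q}{}\)\-/polar decomposition giving the factorisation of Lebesgue measure and hence the independence and cone\-/measure claims in~(b), then the substitution \(s_i = r_i^n\) identifying \((R_i^n)_{i \leq m}\) with \(\Unif(\Kug{p/n}{m} \cap [0,\infty)^m)\), followed by the classical Schechtman\--Zinn representation (degenerate for \(p = \infty\)), with~(c) immediate. The only cosmetic difference is that you re\-/derive the normalising constant \(\KugVol{q}{n}^m \KugVol{p/n}{m}/2^m\) from the factorisation, whereas the paper simply cites the Kempka\--Vyb\'iral formula~\eqref{eq:kugvol}; also note that the displayed ``Dirichlet/Gamma identity'' in your \(p = q\) sanity check is written as a tautology \(\Gamma(\frac{mn}{p}+1) = \Gamma(\frac{n}{p}+1)^m \Gamma(\frac{mn}{p}+1)/\Gamma(\frac{n}{p}+1)^m\) --- the intended cancellation \((2\Gamma(\frac1p+1))^{mn}\Gamma(\frac{n}{p}+1)^{-m}\cdot\Gamma(\frac{n}{p}+1)^{m}\Gamma(\frac{mn}{p}+1)^{-1} = \KugVol{p}{mn}\) is correct but you should rewrite that line.
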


\subsection{Main results\---Poincar\'e\--Maxwell\--Borel principles}
\label{subsec:pmb}

One type of limit theorem which we are considering is a Poincar\'e\--Maxwell\--Borel principle, that is, a statement about the limiting distribution of the first few coordinates of a random vector. In the following two theorems, we shall always assume \((X_{i, j})_{i \leq m, j \leq n} \sim \Unif(\Kug{p, q}{m, n})\).

Owing to the nature of the space \(\ellpe{p}{m}(\ellpe{q}{n})\), having two parameters for dimension, in the sequel limit theorems will usually be considered for three different regimes: firstly, letting \(m \to \infty\) while keeping \(n\) fixed; secondly, vice versa, keeping \(m\) fixed while letting \(n \to \infty\); and thirdly, letting \(n \to \infty\) while treating \(m\) as dependent on \(n\) and going to infinity as well.

In order to keep the amount of case distinctions at a minimum, for the case of the parameter value \(p = \infty\) we agree on these conventions:
\begin{equation*}
\frac{c}{p} := 0 \text{ for any } c \in \RZ, \quad \frac{p}{c} := \infty \text{ for any } c \in (0, \infty), \quad p^{1/p} := 1.
\end{equation*}
For the formulation of our results we introduce the following quantities (whose superscripts denote indices, not powers):
\begin{equation}\label{eq:gp_momente}
\begin{aligned}
M_p^\alpha := \frac{p^{\alpha/p}}{\alpha + 1} \, \frac{\Gamma(\frac{\alpha + 1}{p} + 1)}{\Gamma(\frac{1}{p} + 1)} &\quad \text{for } p \in (0, \infty] \text{ and } \alpha \in (0, \infty),\\
M_p^{p} := 1 &\quad \text{for } p = \infty,
\end{aligned}
\end{equation}
and
\begin{gather*}
C_p^{\alpha, \beta} := M_p^{\alpha + \beta} - M_p^\alpha \, M_p^\beta, \quad V_p^\alpha := C_p^{\alpha, \alpha} \quad \text{for } p \in (0, \infty] \text{ and } \alpha, \beta \in (0, \infty),\\
C_p^{p, \beta} := V_p^{p} := 0 \quad \text{for \(p = \infty\) and \(\beta \in (0, \infty)\).}
\end{gather*}

We can now formulate the first Poincar\'e\--Maxwell\--Borel principle for the case where \(m\to\infty\) while \(n\) is fixed. By \(\Vertl(X)\) we denote the distribution, or law, of a random variable~\(X\).

\begin{thmalpha}[\(m \to \infty\), \(n\) constant]\label{sa:mpb_nkonst}
Let \(p, q \in (0, \infty]\), let \(k, n \in \NZ\) be fixed, and let \(\xi_1, \dotsc, \xi_k\) be independent \(\frac{p}{n}\)\=/Gaussian random variables.
\begin{asparaenum}[(a)]
\item The following weak convergence holds true,
\begin{equation*}
(m^{1/p} \, X_{i, j})_{i \leq k, j \leq n} \KiVert{m \to \infty} (\lvert \xi_i \rvert^{1/n} \, \Theta_i)_{i \leq k}.
\end{equation*}
\item The empirical measures satisfy
\begin{align*}
\frac{1}{m} \sum_{i = 1}^m \delta_{m^{1/p} \, R_i} &\KiWsk{m \to \infty} \Vertl\bigl( \lvert \xi_1 \rvert^{1/n} \bigr),\\
\intertext{and}
\frac{1}{m} \sum_{i = 1}^m \delta_{m^{1/p} \, (X_{i, j})_{j \leq n}} &\KiWsk{m \to \infty} \Vertl\bigl( \lvert \xi_1 \rvert^{1/n} \, \Theta_1 \bigr).
\end{align*}
The convergence is to be understood as convergence in probability in the space of probability measures on \(\RZ\) and \(\RZ^n\), respectively, endowed with the Lévy\--Prokhorov metric; cf.\ Lemma~\ref{lem:kgz_wsk}.
\end{asparaenum}
\end{thmalpha}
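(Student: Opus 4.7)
The plan is to leverage the Schechtman\--Zinn type probabilistic representation of Proposition~\ref{sa:stoch_darst} and reduce both parts to the strong law of large numbers together with Slutsky\=/type arguments. The case \(p = \infty\) is immediate: by the conventions \(m^{1/p} = 1\) and the representation~\eqref{eq:stoch_darst} already has the limiting form \(X_{i, j} \GlVert \lvert \xi_i \rvert^{1/n} \Theta_{i, j}\), so part~(a) is trivial and part~(b) reduces directly to Varadarajan's strong law for empirical measures of i.i.d.\ samples. We therefore focus on \(p < \infty\). Work on a common probability space carrying i.i.d.\ \(\frac{p}{n}\)\=/Gaussian random variables \((\xi_l)_{l \geq 1}\), an independent i.i.d.\ array \((\eta_{i, j})_{i, j \geq 1}\) of \(q\)\=/Gaussian random variables, and an independent \(U \sim \Unif([0, 1])\); set \(\Theta_{i, j} := \eta_{i, j} / \lVert (\eta_{i, l})_{l \leq n} \rVert_q\), so that the rows \(\Theta_i\) are i.i.d.\ cone\=/measure distributed on \(\Sph{q}{n - 1}\), and independent of the \(\xi_l\) and~\(U\). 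Proposition~\ref{sa:stoch_darst}(c) then furnishes, for every~\(m\), the distributional identity
\[
m^{1/p} X_{i, j} \GlVert A_m \, \lvert \xi_i \rvert^{1/n} \, \Theta_{i, j}, \qquad A_m := U^{1/(m n)} \Bigl( \frac{1}{m} \sum_{l = 1}^m \lvert \xi_l \rvert^{p/n} \Bigr)^{-1/p}.
\]

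The key calculation is the moment identity \(\Erw \lvert \xi_1 \rvert^{p/n} = M_{p/n}^{p/n}\); a direct evaluation of~\eqref{eq:gp_momente} via \(\Gamma(z + 1) = z \Gamma(z)\) shows \(M_{p/n}^{p/n} = 1\). The strong law of large numbers therefore gives \(\frac{1}{m} \sum_{l = 1}^m \lvert \xi_l \rvert^{p/n} \to 1\) a.s., and since also \(U^{1/(m n)} \to 1\) a.s., we obtain \(A_m \to 1\) a.s. For part~(a), since the limit variables \(\xi_i, \Theta_i\) (\(i \leq k\)) are mutually independent and in our coupling do not depend on~\(m\), the displayed identity combined with \(A_m \to 1\) yields almost sure, hence distributional, convergence of \((m^{1/p} X_{i, j})_{i \leq k, j \leq n}\) to \((\lvert \xi_i \rvert^{1/n} \Theta_i)_{i \leq k}\).

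For part~(b), the representations \(m^{1/p} R_i \GlVert A_m \lvert \xi_i \rvert^{1/n}\) and \(m^{1/p} (X_{i, j})_{j \leq n} \GlVert A_m \lvert \xi_i \rvert^{1/n} \Theta_i\) reduce each claim, via Varadarajan's strong law applied to the i.i.d.\ samples \((\lvert \xi_i \rvert^{1/n})_{i \geq 1}\) and \((\lvert \xi_i \rvert^{1/n} \Theta_i)_{i \geq 1}\), to verifying that the common scalar dilation \(y \mapsto A_m y\) leaves the Lévy\--Prokhorov limit undisturbed. This is a standard tightness estimate: for any bounded Lipschitz \(f\) and any \(R > 0\),
\[
\biggl\lvert \int \bigl( f(A_m y) - f(y) \bigr) \, \dif \mu_m(y) \biggr\rvert \leq \lVert f \rVert_{\Lip} R \, \lvert A_m - 1 \rvert + 2 \lVert f \rVert_\infty \, \mu_m\bigl( \{ \lvert y \rvert > R \} \bigr),
\]
so that choosing \(R\) large relative to the tightness radius of the limit and letting \(m \to \infty\) kills both terms. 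The principal ingredient is the normalization identity \(\Erw \lvert \xi_1 \rvert^{p/n} = 1\) which pinpoints \(m^{1/p}\) as the correct scaling; the remaining steps are routine applications of the SLLN and standard weak\=/convergence bookkeeping.
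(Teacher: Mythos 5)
Your proof is correct and follows essentially the same strategy as the paper: the Schechtman\--Zinn representation, the SLLN-based observation that \(A_m = U^{1/(mn)}\bigl(\frac{1}{m}\sum_{l=1}^m \lvert\xi_l\rvert^{p/n}\bigr)^{-1/p} \to 1\) almost surely, a coupling argument for part~(a), and for part~(b) a Lipschitz perturbation estimate combined with the empirical\-/measure strong law. The only cosmetic difference is that the paper applies the SLLN to each fixed \(f \in \LipB\) and invokes Lemma~\ref{lem:kgz_wsk} rather than citing Varadarajan's theorem, and bounds \(\lvert f(A_m y) - f(y)\rvert\) via \(\lvert f\rvert_{\Lip}\,\lvert A_m - 1\rvert\,\frac{1}{m}\sum_{i=1}^m \lvert\xi_i\rvert^{1/n}\) (whose average converges a.s.\ by the SLLN) instead of your truncation\-/at\-/\(R\) tightness estimate.
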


We now formulate the second Poincar\'e\--Maxwell\--Borel principle for \(n \to \infty\) while \(m\) is either fixed or tends to infinity with \(n\).

\begin{thmalpha}[\(n \to \infty\)]\label{sa:mpb_unendl}
Let \(p, q \in (0, \infty]\), let \(m \in \NZ\) be fixed or let \(m = m(n) \to \infty\) as \(n \to \infty\), let \(k, l \in \NZ\) (\(k \leq m\) if necessary), and let \((\eta_{i, j})_{i \leq k, j \leq l}\) be an array of independent \(q\)\=/Gaussian random variables.
\begin{asparaenum}[(a)]
\item The following weak convergence holds true,
\begin{equation*}
(m^{1/p} \, n^{1/q} \, X_{i, j})_{i \leq k, j \leq l} \KiVert{n \to \infty} (\eta_{i, j})_{i \leq k, j \leq l}.
\end{equation*}
\item The empirical measure satisfies
\begin{equation*}
\frac{1}{m n} \sum_{i = 1}^m \sum_{j = 1}^n \delta_{m^{1/p} \, n^{1/q} \, X_{i, j}} \KiWsk{n \to \infty} \Vertl(\eta_{1, 1}).
\end{equation*}
\end{asparaenum}
The convergence is to be understood as convergence in probability in the space of probability measures on \(\RZ\) endowed with the Lévy\--Prokhorov metric; cf.\ Lemma~\ref{lem:kgz_wsk}.
\end{thmalpha}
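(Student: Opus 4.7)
The plan is to apply the Schechtman--Zinn type representation of Proposition~\ref{sa:stoch_darst} and reduce everything to a law-of-large-numbers argument. For $p, q < \infty$, \eqref{eq:stoch_darst} rewrites as
\begin{equation*}
m^{1/p} n^{1/q} X_{i, j} \GlVert U^{1/(mn)} \cdot \frac{\lvert \xi_i \rvert^{1/n}}{\bigl( \frac{1}{m} \sum_{k = 1}^m \lvert \xi_k \rvert^{p/n} \bigr)^{1/p}} \cdot \frac{\eta_{i, j}}{\bigl( \frac{1}{n} \sum_{l = 1}^n \lvert \eta_{i, l} \rvert^q \bigr)^{1/q}},
\end{equation*}
with the obvious simplifications when $p = \infty$ or $q = \infty$. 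The strategy is to show that the scalar prefactor $U^{1/(mn)}$ and the $\xi$-factor both tend to $1$, while the $\eta$-factor tends jointly in $(i, j)$ to $\eta_{i, j}$; part~(a) will then follow from Slutsky.

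The preparatory estimates are straightforward. A direct computation from~\eqref{eq:gp_momente} gives $\Erw \lvert \xi_i \rvert^{p/n} = M_{p/n}^{p/n} = 1$ and $\Var(\lvert \xi_i \rvert^{p/n}) = M_{p/n}^{2p/n} - 1 = p/n \to 0$, so Chebyshev yields $\frac{1}{m} \sum_{k = 1}^m \lvert \xi_k \rvert^{p/n} \to 1$ in probability (whether $m$ is fixed or tends to infinity), and likewise $\lvert \xi_i \rvert^{p/n} \to 1$ in probability for each fixed $i$, hence $\lvert \xi_i \rvert^{1/n} \to 1$ by continuous mapping. Since $\Erw \lvert \eta_{i, l} \rvert^q = M_q^q = 1$, the classical LLN gives $\frac{1}{n} \sum_l \lvert \eta_{i, l} \rvert^q \to 1$ in probability, and trivially $U^{1/(mn)} \to 1$ almost surely. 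The edge cases $p = \infty$ and $q = \infty$ are handled by $\lvert \xi_i \rvert^{1/n} \to 1$ a.s.\ (as $\lvert \xi_i \rvert \in (0, 1]$ a.s.) and $\max_l \lvert \eta_{i, l} \rvert \to 1$ in probability for $\Unif([-1, 1])$ variates, respectively. Since $(\xi_i)_i$ and $(\eta_{i, j})_{i, j}$ are independent, and the finitely many $\eta_{i, j}$ with $j \le l$ contribute negligibly to $\sum_{l' = 1}^n \lvert \eta_{i, l'} \rvert^q$, Slutsky delivers the joint convergence claimed in part~(a).

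For part~(b), compare $\mu_n := \frac{1}{mn} \sum_{i, j} \delta_{m^{1/p} n^{1/q} X_{i, j}}$ with the empirical measure $G_n := \frac{1}{mn} \sum_{i, j} \delta_{\eta_{i, j}}$ of the $mn$ i.i.d.\ samples $\eta_{i, j}$. Since $mn \to \infty$, the LLN for empirical measures gives $G_n \to \Vertl(\eta_{1, 1})$ almost surely in the L\'evy--Prokhorov metric, so it suffices to prove $\dLP(\mu_n, G_n) \to 0$ in probability. Using the bounded-Lipschitz characterization, for $f \in \LipB(\RZ)$ one has
\begin{equation*}
\Bigl\lvert \int f \, \dif \mu_n - \int f \, \dif G_n \Bigr\rvert \le \frac{1}{mn} \sum_{i, j} \min\bigl( 2 \lVert f \rVert_\infty, \; \lVert f \rVert_{\Lip} \lvert F_{i, n} - 1 \rvert \cdot \lvert \eta_{i, j} \rvert \bigr),
\end{equation*}
where $F_{i, n}$ denotes the row-dependent prefactor. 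Splitting the sum according to whether $\lvert F_{i, n} - 1 \rvert \le \delta$ or $> \delta$ and using that $\frac{1}{mn} \sum_{i, j} \lvert \eta_{i, j} \rvert$ is bounded in probability (LLN), the problem reduces to showing $\frac{1}{m} \#\{ i : \lvert F_{i, n} - 1 \rvert > \delta \} \to 0$ in probability; by Fubini its expectation equals $\Wsk(\lvert F_{1, n} - 1 \rvert > \delta) \to 0$, which follows from the preparatory estimates. The only genuinely delicate point is the case $m = m(n) \to \infty$ with no growth restriction, where a direct union bound over rows would fail; the Fubini reduction, exploiting the fact that the $F_{i, n}$ are identically distributed in $i$, sidesteps that obstacle cleanly.
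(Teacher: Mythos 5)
Your proposal is correct and follows essentially the same strategy as the paper: the Schechtman--Zinn representation reduces $m^{1/p}n^{1/q}X_{i,j}$ to a row-dependent prefactor times $\eta_{i,j}$, the prefactors converge to $1$ in probability (Lemma~\ref{lem:st_ggz}, Lemma~\ref{lem:stggz_norm}), and part~(b) is handled by counting bad rows and using the fact that these counts are normalized by $m$. The only cosmetic differences are that the paper splits the scalar factor $C_{m,n}$ off from the row-dependent $D_{i,n}$ (so that the bad-row count is binomial and is controlled via Chebyshev), whereas you lump everything into a single identically-distributed $F_{i,n}$ and use Markov on the expectation of the count, which works just as well; also, when $m = m(n) \to \infty$ the empirical measure $G_n$ converges to $\Vertl(\eta_{1,1})$ in probability rather than almost surely (the $mn$ samples are not nested in $n$), but that suffices for your argument.
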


\subsection{Main results\---weak limit theorems}
\label{sec:gws}

Here we present three weak limit theorems for \(\lVert X \rVert_{p_2, q_2}\), where \(X \sim \Unif(\Kug{p_1, q_1}{m, n})\) and \((p_1, q_1) \neq (p_2, q_2)\) in general (for \((p_1, q_1) = (p_2, q_2)\) see Remark~\ref{bem:p1q1_p2q2} below). We start with the case \(m \to \infty\) while \(n\) is fixed.

\begin{thmalpha}[\(m \to \infty\), \(n\) fixed]\label{sa:zgs1}
Let \(p_1, q_1, q_2 \in (0, \infty]\) and \(p_2 \in (0, \infty)\) with \((p_1, q_1) \neq (p_2, q_2)\), let \(n \in \NZ\), and for each \(m \in \NZ\) let \(X^m \sim \Unif(\Kug{p_1, q_1}{m, n})\). Then
\begin{equation*}
\Biggl( \sqrt{m} \biggl( \frac{m^{1/p_1 - 1/p_2}}{(M_{p_1/n}^{p_2/n} \Erw[\lVert \Theta_1 \rVert_{q_2}^{p_2}])^{1/p_2}} \, \lVert X^m \rVert_{p_2, q_2} - 1 \biggr) \Biggr)_{m \geq 1} \KiVert{} \sigma N,
\end{equation*}
where \(N\) is a standard Gaussian random variable, and
\begin{equation*}
\sigma^2 := \frac{1}{n p_1} - \frac{1}{p_2^2} - \frac{2 C_{p_1/n}^{p_1/n, p_2/n}}{p_1 p_2 M_{p_1/n}^{p_2/n}} + \frac{M_{p_1/n}^{2 p_2/n} \Erw[\lVert \Theta_1 \rVert_{q_2}^{2 p_2}]}{(p_2 M_{p_1/n}^{p_2/n} \Erw[\lVert \Theta_1 \rVert_{q_2}^{p_2}])^2}.
\end{equation*}
\end{thmalpha}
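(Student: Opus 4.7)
The plan is to reduce the statement to a classical multivariate central limit theorem for i.i.d.\ sums via the probabilistic representation in Proposition~\ref{sa:stoch_darst}, and then conclude using Slutsky's lemma and the delta method. Writing $A := \Erw[\lVert\Theta_1\rVert_{q_2}^{p_2}]$ for brevity, the identity $\lVert (X^m_{i,j})_{j\leq n}\rVert_{q_2} = R_i\lVert\Theta_i\rVert_{q_2}$ combined with the representation of the radii $R_i$ yields (in the case $p_1 < \infty$)
\begin{equation*}
\lVert X^m \rVert_{p_2, q_2}^{p_2} \GlVert U^{p_2/(mn)} \cdot \frac{S_m}{T_m^{p_2/p_1}},
\end{equation*}
where $S_m := \sum_{i=1}^m \lvert \xi_i \rvert^{p_2/n}\lVert\Theta_i\rVert_{q_2}^{p_2}$ and $T_m := \sum_{i=1}^m \lvert \xi_i \rvert^{p_1/n}$ are sums of i.i.d.\ random variables, and $U, (\xi_i), (\Theta_i)$ are jointly independent. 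The case $p_1 = \infty$ is strictly easier, as both $T_m$ and the $U$-factor disappear, so I focus on $p_1 < \infty$.

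By $M_{p_1/n}^{p_1/n} = 1$ and independence of $\xi_1$ from $\Theta_1$, the i.i.d.\ averages $\bar{S}_m := S_m/m$ and $\bar{T}_m := T_m/m$ have means $M_{p_1/n}^{p_2/n} A$ and $1$, respectively. A direct rescaling shows that the quantity normalized in the theorem equals in distribution
\begin{equation*}
U^{1/(mn)} \cdot f(\bar{S}_m, \bar{T}_m), \qquad f(a,b) := (M_{p_1/n}^{p_2/n} A)^{-1/p_2}\,a^{1/p_2}\,b^{-1/p_1}.
\end{equation*}
Since $\lvert\log U^{1/(mn)}\rvert = \lvert\log U\rvert/(mn)$ and $\log U$ is integrable, $\sqrt{m}(U^{1/(mn)}-1) \KiWsk{} 0$, so the $U$-factor is negligible by Slutsky's lemma. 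Applying the bivariate CLT to $(\bar{S}_m, \bar{T}_m)$ together with the delta method at the mean $(M_{p_1/n}^{p_2/n} A, 1)$---where $f = 1$ and $\nabla f = \bigl((p_2 M_{p_1/n}^{p_2/n} A)^{-1},\, -1/p_1\bigr)$---produces the claimed Gaussian limit with asymptotic variance $\sigma^2 = (\nabla f)^{\mathsf{T}}\Sigma\,\nabla f$, where $\Sigma$ is the covariance matrix of the summand vector $(\lvert\xi_1\rvert^{p_2/n}\lVert\Theta_1\rVert_{q_2}^{p_2}, \lvert\xi_1\rvert^{p_1/n})$.

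It remains to verify that this variance matches the one stated. Using independence of $\xi_1$ from $\Theta_1$, the entries of $\Sigma$ read
\begin{equation*}
\Var\bigl(\lvert\xi_1\rvert^{p_2/n}\lVert\Theta_1\rVert_{q_2}^{p_2}\bigr) = M_{p_1/n}^{2p_2/n}\Erw[\lVert\Theta_1\rVert_{q_2}^{2p_2}] - (M_{p_1/n}^{p_2/n})^2 A^2,
\end{equation*}
\begin{equation*}
\Cov\bigl(\lvert\xi_1\rvert^{p_2/n}\lVert\Theta_1\rVert_{q_2}^{p_2},\, \lvert\xi_1\rvert^{p_1/n}\bigr) = A\,C_{p_1/n}^{p_1/n, p_2/n}, \qquad \Var\bigl(\lvert\xi_1\rvert^{p_1/n}\bigr) = V_{p_1/n}^{p_1/n}.
\end{equation*}
Expanding $(\nabla f)^{\mathsf{T}}\Sigma\,\nabla f$ reproduces three of the four terms in the stated $\sigma^2$ directly (the $-1/p_2^2$ term arising from simplifying $(M_{p_1/n}^{p_2/n})^2 A^2$ in the diagonal contribution); the remaining term $V_{p_1/n}^{p_1/n}/p_1^2$ must equal $1/(np_1)$, which reduces to the identity $M_{p}^{2p} = p+1$ (a one-line $\Gamma$-function manipulation) applied with $p = p_1/n$. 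The main non-routine technical points are thus the negligibility of the $U$-factor after $\sqrt{m}$-scaling and this closed-form variance identity; everything else is a mechanical application of standard CLT machinery to the i.i.d.\ structure unlocked by Proposition~\ref{sa:stoch_darst}.
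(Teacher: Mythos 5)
Your proposal is correct and follows essentially the same route as the paper: reduce via Proposition~\ref{sa:stoch_darst} to a ratio of i.i.d.\ sums, apply the bivariate CLT to $\bigl(\lvert\xi_1\rvert^{p_2/n}\lVert\Theta_1\rVert_{q_2}^{p_2},\,\lvert\xi_1\rvert^{p_1/n}\bigr)$, and transfer through a first-order expansion (the paper uses explicit Taylor expansion plus its remainder lemma where you invoke the delta method; the paper folds the $U$-term into the same expansion rather than splitting it off via Slutsky, but this is cosmetic). The variance computation and the identity $V_{p}^{p}=p$ (so $V_{p_1/n}^{p_1/n}/p_1^2 = 1/(np_1)$) agree with the paper.
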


\begin{Bem}
It can be shown that \(\sigma^2 = 0\) iff \((p_1, q_1) = (p_2, q_2)\).
\end{Bem}

The following weak limit theorem covers the case where \(n \to \infty\) while \(m\) is fixed. We obtain both central and non\-/central limit behaviour, depending on the relation/values of the parameters \(p_1\), \(q_1\), and \(q_2\).

\begin{thmalpha}[\(m\) fixed, \(n \to \infty\)]\label{sa:zgs2}
Let \(p_1, q_1 \in (0, \infty]\) and \(p_2, q_2 \in (0, \infty)\) with \((p_1, q_1) \neq (p_2, q_2)\), let \(m \in \NZ\) be fixed, and for each \(n \in \NZ\) let \(X^n \sim \Unif(\Kug{p_1, q_1}{m, n})\).
\begin{compactenum}[(a)]
\item If \(q_1 \neq q_2\), then
\begin{equation*}
\Biggl( \sqrt{n} \biggl( \frac{m^{1/p_1 - 1/p_2} \, n^{1/q_1 - 1/q_2}}{(M_{q_1}^{q_2})^{1/q_2}} \lVert X^n \rVert_{p_2, q_2} - 1 \biggr) \Biggr)_{n \geq 1} \KiVert{} \sigma N,
\end{equation*}
where \(N\) is a standard Gaussian random variable, and
\begin{equation*}
\sigma^2 := \frac{1}{m} \biggl( \frac{V_{q_1}^{q_1}}{q_1^2} - \frac{2 C_{q_1}^{q_1, q_2}}{q_1 q_2 M_{q_1}^{q_2}} + \frac{V_{q_1}^{q_2}}{(q_2 M_{q_1}^{q_2})^2} \biggr).
\end{equation*}
\item If \(q_1 = q_2\) and \(p_1 < \infty\), then
\begin{equation*}
\bigl( m n (1 - m^{1/p_1 - 1/p_2} \lVert X^n \rVert_{p_2, q_1}) \bigr)_{n \geq 1} \KiVert{} E + \frac{p_1 - p_2}{2 p_1} \sum_{i = 1}^{m - 1} N_i^2,
\end{equation*}
where \(E\) is an exponentially distributed random variable with mean \(1\), \(N_1, \dotsc, N_{m - 1}\) are standard Gaussian random variables, and \(E, N_1, \dotsc, N_{m - 1}\) are independent.
\item If \(q_1 = q_2\) and \(p_1 = \infty\), then
\begin{equation*}
\bigl(  m n (1 - m^{-1/p_2} \lVert X^n \rVert_{p_2, q_1}) \bigr)_{n \geq 1} \KiVert{} \sum_{i = 1}^m E_i,
\end{equation*}
where \(E_1, \dotsc, E_m\) are independent, exponentially distributed random variables with mean \(1\).
\end{compactenum}
\end{thmalpha}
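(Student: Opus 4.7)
The plan is to reduce all three parts to Proposition~\ref{sa:stoch_darst}. Writing
\begin{equation*}
\lVert X^n \rVert_{p_2, q_2}^{p_2} = \sum_{i = 1}^m R_i^{p_2} \lVert \Theta_i \rVert_{q_2}^{p_2}
\end{equation*}
and substituting the $\xi$/$\eta$-representations converts the problem into one about $m$ jointly distributed $(p_1/n)$-Gaussians $\xi_1, \dotsc, \xi_m$ (plus an independent $U \sim \Unif[0, 1]$) governing the radial part, and $m$ independent families of $n$ i.i.d.\ $q_1$-Gaussians $(\eta_{i, j})_j$ governing the angular parts. Crucially, the radial and angular families are mutually independent, and the $\Theta_i$ are moreover i.i.d.\ across $i$. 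Since $m$ is fixed and $n \to \infty$, all fluctuations arise from LLN/CLT on the $n$-level and a Taylor expansion will close each case.

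For part~(a) with $q_1 \neq q_2$, set $S_i := m^{1/p_1} R_i$ and $\tilde T_i := n^{1/q_1 - 1/q_2} \lVert \Theta_i \rVert_{q_2}/(M_{q_1}^{q_2})^{1/q_2}$, so that
\begin{equation*}
Z := \frac{m^{1/p_1 - 1/p_2} n^{1/q_1 - 1/q_2}}{(M_{q_1}^{q_2})^{1/q_2}} \lVert X^n \rVert_{p_2, q_2} = \Bigl( \frac{1}{m} \sum_{i = 1}^m S_i^{p_2} \tilde T_i^{p_2} \Bigr)^{1/p_2}.
\end{equation*}
The $\tilde T_i$ are i.i.d.; applying a bivariate CLT to the empirical averages $(\frac{1}{n}\sum_j \lvert \eta_{1, j}\rvert^{q_2}, \frac{1}{n}\sum_j \lvert \eta_{1, j}\rvert^{q_1})$ (with means $M_{q_1}^{q_2}, M_{q_1}^{q_1} = 1$ and covariance entries $V_{q_1}^{q_2}, V_{q_1}^{q_1}, C_{q_1}^{q_1, q_2}$) and the delta method to $f(A, B) = (A/(B^{q_2/q_1} M_{q_1}^{q_2}))^{1/q_2}$ at $(M_{q_1}^{q_2}, 1)$ produce $\sqrt{n}(\tilde T_1 - 1) \to N(0, m \sigma^2)$, the limiting variance being precisely the bracket in the definition of $\sigma^2$. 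On the radial side, each $S_i = 1 + O_p(1/\sqrt n)$, but the identity $\sum_i R_i^{p_1} = U^{p_1/(mn)} = 1 + O_p(1/n)$ combined with a Taylor expansion of $S_i^{p_1}$ about $1$ forces $\sum_i (S_i - 1) = O_p(1/n)$, one order smaller than individual fluctuations. A joint first-order expansion of $Z - 1$ then collapses to $\sqrt n (Z - 1) = \frac{\sqrt n}{m} \sum_i (\tilde T_i - 1) + o_p(1)$, and an $m$-fold i.i.d.\ CLT delivers the limit $\sigma N$.

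Part~(b) ($q_1 = q_2$, $p_1 < \infty$) trivializes the angular part via $\lVert \Theta_i\rVert_{q_2} = 1$, so $\lVert X^n \rVert_{p_2, q_1}^{p_2} = \sum_i R_i^{p_2}$. Setting $Y_i := \lvert \xi_i \rvert^{p_1/n} \GlVert (p_1/n) G_i$ with $G_i \sim \Gamma(n/p_1, 1)$ i.i.d., one computes
\begin{equation*}
\lambda^{p_2} := (m^{1/p_1 - 1/p_2} \lVert X^n \rVert_{p_2, q_1})^{p_2} = U^{p_2/(m n)} \cdot \frac{1}{m} \sum_{i = 1}^m \Bigl( \frac{Y_i}{\bar Y} \Bigr)^{p_2/p_1},
\end{equation*}
where $\bar Y := m^{-1} \sum_k Y_k$. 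Since $-\log U \sim \Exp(1)$, the first factor contributes $E := -\log U$ to the limit of $mn (1 - \lambda) = -mn \log \lambda + o_p(1)$. Writing $y_i := Y_i - 1$ and expanding $(Y_i/\bar Y)^{p_2/p_1}$ about $1$, the linear term drops because $\sum_i (y_i - \bar y) = 0$, leaving a quadratic contribution of size $\frac{p_2(p_2 - p_1)}{2 m p_1^2} \sum_i (y_i - \bar y)^2$. The Gamma CLT yields $\sqrt{n/p_1}\, y_i \to N_i$ i.i.d.\ standard Gaussian, whence $(n/p_1) \sum_i (y_i - \bar y)^2 \to \chi^2_{m - 1} = \sum_{i = 1}^{m - 1} N_i^2$, and after collecting signs one lands at the coefficient $(p_1 - p_2)/(2 p_1)$; independence of $U$ from the $\xi_i$ yields joint independence of $E$ and the $N_i$. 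Part~(c) is direct: with $p_1 = \infty$, $R_i = \lvert \xi_i \rvert^{1/n}$ with $\lvert \xi_i \rvert \sim \Unif[0, 1]$ i.i.d., so $E_i := -\log \lvert \xi_i \rvert \sim \Exp(1)$ i.i.d., and
\begin{equation*}
(m^{-1/p_2} \lVert X^n \rVert_{p_2, q_1})^{p_2} = \frac{1}{m} \sum_{i = 1}^m e^{-p_2 E_i/n} = 1 - \frac{p_2}{m n} \sum_{i = 1}^m E_i + O_p(n^{-2}),
\end{equation*}
from which the stated limit follows after a $p_2$-th root.

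The hardest step is the second-order structure in part~(b): the linear Taylor coefficient of $(Y_i/\bar Y)^{p_2/p_1}$ vanishes by self-consistency of the centering, so the non-trivial contribution sits at order $1/n$ and must be extracted from a genuinely quadratic expansion whose cubic remainders need uniform control on the correct scale; the $m - 1$ (not $m$) degrees of freedom of the limiting $\chi^2$ have to be tracked carefully through the centering $\bar y$. In part~(a), the companion difficulty is decoupling the two layers of randomness cleanly enough that the first-order expansion of $Z - 1$ is valid in distribution, which in turn requires uniform integrability type estimates to absorb the higher-order contributions from both $S_i$ and $\tilde T_i$.
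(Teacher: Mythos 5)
Your overall strategy -- pass through the Schechtman\--Zinn representation of Proposition~\ref{sa:stoch_darst}, observe that the radial contribution cancels at first order, and Taylor\-/expand to the first non\-/vanishing order -- is exactly what the paper does, and your part~(b) bookkeeping (centering $y_i := Y_i-1$ and tracking $\sum_i(y_i-\bar y)^2 \to p_1\sum_{i=1}^{m-1}N_i^2/n$) is an equivalent, arguably more statistical, rephrasing of the paper's quadratic form $\trapo{x}Ax$ with $A = mI_m - \mathbf{1}\trapo{\mathbf{1}}$, whose eigenvalues $m$ (multiplicity $m-1$) and $0$ produce the same $\chi^2_{m-1}$. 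Part~(c) is identical in substance. In part~(a), the paper reaches the angular CLT abstractly via Lemma~\ref{lem:zgs_lp}, whereas you rederive it by a bivariate CLT plus delta method applied to $\bigl(\tfrac{1}{n}\sum_j\lvert\eta_{1,j}\rvert^{q_2},\ \tfrac{1}{n}\sum_j\lvert\eta_{1,j}\rvert^{q_1}\bigr)$; the variance you obtain is the correct one.

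There is, however, a genuine gap in your part~(a): the theorem allows $q_1 = \infty$, in which case the second coordinate $\tfrac{1}{n}\sum_j\lvert\eta_{1,j}\rvert^{q_1}$ that your delta method requires is simply undefined -- the normalizing $q_1$\-/norm is $\max_j\lvert\eta_{1,j}\rvert$, which obeys a non\-/Gaussian extreme\-/value scaling rather than an i.i.d.\ sum CLT. The paper isolates this difficulty in Lemma~\ref{lem:zgs_lp}, where the $\KegM{\infty}{n-1}$ case is handled by writing $\Eta_n := \sqrt{n}(1 - \lVert(\xi_i)_{i\leq n}\rVert_\infty)$, using that $\lVert(\xi_i)_{i\leq n}\rVert_\infty^n \sim \Unif([0,1])$, and showing $\Eta_n \to 0$ in probability so that only the $q_2$\-/average contributes to the limit; your sketch would need an analogous separate argument for $q_1 = \infty$. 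Beyond that, you correctly flag but do not close the remainder\-/term control at the right scale (the Taylor remainders must be killed at rate $n$ in (b), and at rate $\sqrt n$ in (a)); the paper supplies this through its remainder lemma (Lemma~\ref{lem:restglied_null}), which you would need to invoke -- or reprove -- to make the $O_p$ bookkeeping into an actual proof.
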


\begin{Bem}\label{bem:p1q1_p2q2}
Statement~(b) above remains true even if \(p_1 = p_2\) and \(m = m(n) \to \infty\); this is because then \(\lVert X^n \rVert_{p_1, q_1} \GlVert U^{1/(m n)}\), and \(\bigl( m n (1 - U^{1/(m n)}) \bigr)_{n \to \infty} \KiVert{} E\).
\end{Bem}

The third weak limit theorem now treats the case where both \(m\) and \(n\) tend to infinity.

\begin{thmalpha}[\(m, n \to \infty\)]\label{sa:zgs3}
Let \(p_1, q_1 \in (0, \infty]\) and \(p_2, q_2 \in (0, \infty)\) with \((p_1, q_1) \neq (p_2, q_2)\), let \(m = m(n) \to \infty\) as \(n \to \infty\), let \(N\) be a standard Gaussian random variable, and for each \(n \in \NZ\) let \(X^n \sim \Unif(\Kug{p_1, q_1}{m, n})\).
\begin{compactenum}[(a)]
\item If \(q_1 \neq q_2\), then
\begin{equation*}
\Biggl( \sqrt{m n} \biggl( \frac{m^{1/p_1 - 1/p_2}}{(M_{p_1/n}^{p_2/n} \Erw[\lVert \Theta_1 \rVert_{q_2}^{p_2}])^{1/p_2}} \lVert X^n \rVert_{p_2, q_2} - 1 \biggr) \Biggr)_{n \geq 1} \KiVert{} \sigma N,
\end{equation*}
where
\begin{equation*}
\sigma^2 := \frac{V_{q_1}^{q_1}}{q_1^2} - \frac{2 C_{q_1}^{q_1, q_2}}{q_1 q_2 M_{q_1}^{q_2}} + \frac{V_{q_1}^{q_2}}{(q_2 M_{q_1}^{q_2})^2}.
\end{equation*}
\item If \(q_1 = q_2\) and \(p_1 < \infty\), then
\begin{equation*}
\Biggl( \sqrt{m} \, n \biggl( \frac{m^{1/p_1 - 1/p_2}}{(M_{p_1/n}^{p_2/n})^{1/p_2}} \lVert X^n \rVert_{p_2, q_1} - 1 \biggr) \Biggr)_{n \geq 1} \KiVert{} \frac{\lvert p_2 - p_1 \rvert}{\sqrt{2} \, p_1} N.
\end{equation*}
\item If \(q_1 = q_2\) and \(p_1 = \infty\), then
\begin{equation*}
\Biggl( \sqrt{m} \, n \biggl( \frac{\lVert X^n \rVert_{p_2, q_1}}{m^{1/p_2} (M_\infty^{p_2/n})^{1/p_2}} - 1 \biggr) \Biggr)_{n \geq 1} \KiVert{} N.
\end{equation*}
\end{compactenum}
\end{thmalpha}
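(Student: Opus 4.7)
\emph{Plan.} The proof rests on the stochastic representation of Proposition~\ref{sa:stoch_darst} and follows a unified three\-/step pattern in all three parts: (i) express the normalized quantity as a function of i.i.d.\ random variables whose distribution depends on $n$; (ii) Taylor\-/expand and detect an algebraic cancellation of the leading\-/order fluctuations; (iii) apply a triangular\-/array central limit theorem to the surviving terms. Throughout, let \(U \sim \Unif(0, 1)\), let \(\xi_1, \dotsc, \xi_m\) be i.i.d.\ \((p_1/n)\)\=/Gaussian (i.e.\ \(\Unif(-1, 1)\) when \(p_1 = \infty\)), and let \((\eta_{i, j})\) be i.i.d.\ \(q_1\)\=/Gaussian, all mutually independent. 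Set \(B_i := \lvert \xi_i \rvert^{p_1/n}\), \(Y_i := \lVert \Theta_i \rVert_{q_2}^{p_2}\) and \(\rho := p_2/p_1\); then \(\lvert \xi_i \rvert^{p_2/n} = B_i^{\rho}\) and \(\Erw B_i = M_{p_1/n}^{p_1/n} = 1\).

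\emph{Part~(a)} (\(q_1 \neq q_2\)). Assuming \(p_1 < \infty\), Proposition~\ref{sa:stoch_darst} together with \(\lVert X^n \rVert_{p_2, q_2}^{p_2} = \sum_i R_i^{p_2} Y_i\) yields
\begin{equation*}
T_n^{p_2} \GlVert U^{p_2/(m n)} \cdot \frac{(1/m) \sum_{i = 1}^m B_i^{\rho} (Y_i / \Erw Y_1)}{M_{p_1/n}^{p_2/n} \bigl( (1/m) \sum_{k = 1}^m B_k \bigr)^{\rho}}
\end{equation*}
for \(T_n := m^{1/p_1 - 1/p_2} \lVert X^n \rVert_{p_2, q_2}/(M_{p_1/n}^{p_2/n} \Erw Y_1)^{1/p_2}\). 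Set \(\kappa_i := Y_i/\Erw Y_1 - 1\). A first-order Taylor expansion of \(\log T_n\) in \(B_i - 1\) and \(\kappa_i\) gives
\begin{equation*}
\log T_n = \frac{1}{m} \sum_{i = 1}^m \Bigl[ \frac{\rho(B_i - 1) + \kappa_i}{p_2} - \frac{B_i - 1}{p_1} \Bigr] + \text{h.o.t.} = \frac{1}{p_2 m} \sum_{i = 1}^m \kappa_i + \text{h.o.t.},
\end{equation*}
the \(B\)\-/fluctuations cancelling by virtue of \(\rho/p_2 = 1/p_1\). Since \(\kappa_i\) depends only on \((\eta_{i, j})_{j \leq n}\), the multivariate CLT for \(n^{-1}(\lVert \eta_i \rVert_{q_1}^{q_1}, \lVert \eta_i \rVert_{q_2}^{q_2})\) combined with the delta method yields \(n \Var(\kappa_i) \to p_2^2 \sigma^2\) with the \(\sigma^2\) of the theorem. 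A Lyapunov\-/CLT for the triangular array then gives \(\sqrt{m n}\, \log T_n \to \sigma N\), whence \(\sqrt{m n}(T_n - 1) \to \sigma N\) by Slutsky. The case \(p_1 = \infty\) is analogous but simpler: the \(\bar B\)\-/denominator is absent, and the variance of \(\lvert \xi_i \rvert^{p_2/n}/M_\infty^{p_2/n} - 1\) is \(\BigO(1/n^2)\), hence negligible compared to \(\Var(\kappa_i) = \BigO(1/n)\), so only the \(\eta\)\-/fluctuations contribute and the same \(\sigma^2\) results.

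\emph{Part~(b)} (\(q_1 = q_2\), \(p_1 < \infty\)). Here \(Y_i \equiv 1\), so \(\kappa_i \equiv 0\) and the first\-/order term above vanishes; one must retain second\-/order Taylor terms. Set \(Z_i := B_i\); via \(Z_i \GlVert (p_1/n) W_i\) with \(W_i \sim \Gamma(n/p_1, 1)\) one has \(\Var Z_i = p_1/n\). Using the Stirling\-/type asymptotic \(M_{p_1/n}^{p_2/n} = 1 + \rho(\rho - 1) p_1/(2 n) + \BigO(n^{-2})\) and a second\-/order expansion of \(Z_i^{\rho}\), the first\-/order cancellation as before leaves
\begin{equation*}
\log T_n = \frac{p_2 - p_1}{2 p_1^2} \cdot \frac{1}{m} \sum_{i = 1}^m \bigl[ (Z_i - 1)^2 - p_1/n \bigr] + \text{negligible terms,}
\end{equation*}
where the \emph{exact} mean-zero structure of \((Z_i - 1)^2 - p_1/n\) is crucial. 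The variance of this summand equals \(2 (p_1/n)^2 + \BigO(n^{-3})\) via the fourth central moment of a gamma, and the CLT applied to the triangular array gives \(\sqrt{m}\, n \log T_n \to [(p_2 - p_1)/(\sqrt 2\, p_1)] N\), which by symmetry of \(N\) agrees in law with the stated limit. For part~(c) (\(p_1 = \infty\), \(q_1 = q_2\)) the representation collapses to \(\lVert X^n \rVert_{p_2, q_1}^{p_2} \GlVert \sum_i U_i^{p_2/n}\) with \(U_i\) i.i.d.\ \(\Unif(0, 1)\); an explicit computation gives \(V_\infty^{p_2/n} = n p_2^2/((n + p_2)^2 (n + 2 p_2)) = p_2^2/n^2 + \BigO(n^{-3})\), and the classical CLT for the sum, followed by linearization of \((\cdot)^{1/p_2}\) about its mean, directly yields the limit \(N\).

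\emph{Main obstacle.} The delicate point is the \emph{exact} first\-/order cancellation in the Taylor expansion in parts~(a) and~(b), responsible both for the rate \(\sqrt{m n}\) and for the clean form of \(\sigma^2\) in~(a), which notably does not depend on \((p_1, p_2)\). In~(b) this cancellation forces one down to second order, where the Stirling correction to \(M_{p_1/n}^{p_2/n}\) must be tracked precisely in order to match \(\Erw (Z_i - 1)^2 = p_1/n\) and produce a mean\-/zero summand; missing it would spoil the variance. A useful point is that the residual after the second\-/order expansion has mean zero exactly (since \(\Erw Z_i^\rho/M_{p_1/n}^{p_2/n} = 1\)) and variance \(\BigO(n^{-3})\), so the standard CLT succeeds uniformly in \(m = m(n)\); verifying the Lyapunov condition reduces to routine moment bounds on \(\Gamma\)\- and \(q\)\-Gaussian\-/distributed variables.
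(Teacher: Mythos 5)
Your proposal is correct and follows essentially the same route as the paper: express \(\lVert X^n\rVert_{p_2,q_2}\) via the Schechtman--Zinn representation, Taylor\-/expand, observe that the \(\xi\)\-/fluctuations cancel at leading order (because \(\rho/p_2 = 1/p_1\)) so that in (a) only the \(\Theta\)\-/fluctuations survive, in (b) one must descend to second order in \(B_i - 1\) where the gamma fourth moment yields the variance, and then apply a Lyapunov CLT to the resulting triangular array. The only differences from the paper are cosmetic (you work with \(\log T_n\) instead of expanding the ratio directly via the remainder lemma) and that several routine but necessary verifications are left implicit — in (a), the ``h.o.t.''\ include the cross terms \(\Xi_{n,i}^2 \Xi_{n,i}^3\), the residual \(\bigl(B_i^\rho/M_{p_1/n}^{p_2/n}-1\bigr)-\rho(B_i-1)\) whose variance must be shown \(O(n^{-2})\) (the ``cancellation'' is of Taylor coefficients, not of the random variables themselves), and the \(\log(1+x)-x\) remainder for the averaged sums; and the Lyapunov condition needs the moment convergence supplied by Lemma~\ref{lem:zgs_lp}.
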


\subsection{Applications\---asymptotic volume distribution in intersections of mixed\-/norm balls}
\label{subsec:ss}

Kempka and Vyb\'iral \cite{KV2017} have studied the volume of unit balls in the mixed\-/norm sequence spaces. Our distributional limit theorems of Section~\ref{sec:gws} now allow us to obtain Schechtman\--Schmucken\-schl\"ager\-/type results on the distribution of volume in the mixed norm spaces; for that we write \(r_{p, q}^{m, n} := \vol_{m n}(\Kug{p, q}{m, n})^{1/(m n)}\) (notice that \(\vol_{m n}(\inv{(r_{p, q}^{m, n})} \Kug{p, q}{m, n}) = 1\)), and for any \(p_1, q_1, p_2, q_2 \in (0, \infty]\) and \(t \in (0, \infty)\) we set
\begin{equation*}
V^{m, n}(t) := \vol_{m n}\bigl( (r_{p_1, q_1}^{m, n})^{-1} \Kug{p_1, q_1}{m, n} \cap t (r_{p_2, q_2}^{m, n})^{-1} \Kug{p_2, q_2}{m, n} \bigr).
\end{equation*}
Clearly \(V^{m, n}(t)\) also depends on \(p_1, q_1, p_2, q_2\), but since those parameters are fixed, and since we wish to keep the notation simple, we will suppress them.

\begin{Kor}[\(m \to \infty\), \(n\) fixed]\label{sa:kritfall1}
Let \(p_1, q_1, q_2 \in (0, \infty]\) and \(p_2 \in (0, \infty)\) with \((p_1, q_1) \neq (p_2, q_2)\), let \(n \in \NZ\) be fixed, and let \(t \in (0, \infty)\). Define
\begin{equation*}
A_{p_1, q_1; p_2, q_2; n} := \biggl( \frac{\KugVol{q_1}{n} \, \Gamma(\frac{n}{p_1} + 1)}{\KugVol{q_2}{n} \, \Gamma(\frac{n}{p_2} + 1)} \biggr)^{1/n} \Bigl( \frac{\ez}{n} \Bigr)^{1/p_1 - 1/p_2} \frac{p_1^{1/p_1}}{p_2^{1/p_2}} \, \frac{1}{\bigl( M_{p_1/n}^{p_2/n} \Erw[\lVert \Theta_1 \rVert_{q_2}^{p_2}] \bigr)^{1/p_2}}.
\end{equation*}
Then
\begin{equation*}
\lim_{m \to \infty} V^{m, n}(t) = \begin{cases} 0 & \text{if } t A_{p_1, q_1; p_2, q_2; n} < 1, \\ \frac{1}{2} & \text{if } t A_{p_1, q_1; p_2, q_2; n} = 1, \\ 1 & \text{if } t A_{p_1, q_1; p_2, q_2; n} > 1. \end{cases}
\end{equation*}
\end{Kor}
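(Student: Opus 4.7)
The plan is to rewrite $V^{m,n}(t)$ as a tail probability for $\lVert X \rVert_{p_2,q_2}$ with $X \sim \Unif(\Kug{p_1,q_1}{m,n})$, and then to feed it into Theorem~\ref{sa:zgs1}.

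First, since by definition $(r_{p_1,q_1}^{m,n})^{-1}\Kug{p_1,q_1}{m,n}$ has unit Lebesgue measure, the matrix $Y := X/r_{p_1,q_1}^{m,n}$ is uniform on this set, so
\begin{equation*}
V^{m,n}(t) = \Wsk\bigl(Y \in t\,(r_{p_2,q_2}^{m,n})^{-1}\Kug{p_2,q_2}{m,n}\bigr) = \Wsk\!\left(\lVert X \rVert_{p_2,q_2} \leq t\,\frac{r_{p_1,q_1}^{m,n}}{r_{p_2,q_2}^{m,n}}\right).
\end{equation*}
Second, integrating the joint density of $(R_i)_{i\leq m}$ from Proposition~\ref{sa:stoch_darst}(a) to one and applying polar coordinates on each row yields the closed form $\vol_{mn}(\Kug{p,q}{m,n}) = 2^{-m}\KugVol{p/n}{m}(\KugVol{q}{n})^m$. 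Combined with the classical identity $\KugVol{p}{m} = (2\Gamma(1/p+1))^m/\Gamma(m/p+1)$ and Stirling's formula (valid also for $p_1 = \infty$ under the paper's conventions), a routine calculation gives the sharp asymptotics
\begin{equation*}
m^{1/p_1 - 1/p_2}\,\frac{r_{p_1,q_1}^{m,n}}{r_{p_2,q_2}^{m,n}} = A_{p_1,q_1;p_2,q_2;n}\,\bigl(M_{p_1/n}^{p_2/n}\,\Erw[\lVert\Theta_1\rVert_{q_2}^{p_2}]\bigr)^{1/p_2}\bigl(1 + O(m^{-1})\bigr)
\end{equation*}
as $m\to\infty$, with $A_{p_1,q_1;p_2,q_2;n}$ being precisely the constant displayed in the statement.

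Third, set
\begin{equation*}
T_m := \frac{t\,m^{1/p_1-1/p_2}\,r_{p_1,q_1}^{m,n}/r_{p_2,q_2}^{m,n}}{\bigl(M_{p_1/n}^{p_2/n}\Erw[\lVert\Theta_1\rVert_{q_2}^{p_2}]\bigr)^{1/p_2}},
\end{equation*}
so that $T_m \to t\,A_{p_1,q_1;p_2,q_2;n}$ with a remainder of order $O(m^{-1})$. Then
\begin{equation*}
V^{m,n}(t) = \Wsk\!\left(\sqrt{m}\Bigl(\tfrac{m^{1/p_1-1/p_2}\,\lVert X\rVert_{p_2,q_2}}{(M_{p_1/n}^{p_2/n}\Erw[\lVert\Theta_1\rVert_{q_2}^{p_2}])^{1/p_2}} - 1\Bigr) \leq \sqrt{m}\,(T_m - 1)\right),
\end{equation*}
and by Theorem~\ref{sa:zgs1} the sequence on the left converges in distribution to $\sigma N$ with $\sigma > 0$ (since $(p_1,q_1) \neq (p_2,q_2)$). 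If $t\,A_{p_1,q_1;p_2,q_2;n} < 1$, respectively $> 1$, then $\sqrt{m}(T_m - 1) \to -\infty$, respectively $+\infty$, and the probability tends to $0$, respectively $1$, by a standard tightness argument.

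The main obstacle will be the critical case $t\,A_{p_1,q_1;p_2,q_2;n} = 1$: the Stirling remainder must be sharp enough to beat the CLT scale $m^{-1/2}$. Since in fact $T_m - 1 = O(m^{-1}) \ll m^{-1/2}$, one obtains $\sqrt{m}(T_m - 1) \to 0$, so the probability converges to $\Wsk(\sigma N \leq 0) = 1/2$. The care required here is purely bookkeeping: one has to track the cancellation of all $m$\=/dependent prefactors arising when Stirling is applied to the ratio $\KugVol{p_1/n}{m}^{1/(mn)}/\KugVol{p_2/n}{m}^{1/(mn)}$, and verify that no surviving logarithmic factor spoils the $O(m^{-1})$ estimate.
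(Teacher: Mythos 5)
Your approach mirrors the paper's exactly: rewrite $V^{m,n}(t)$ as the tail probability $\Wsk\bigl[\lVert X\rVert_{p_2,q_2} \leq t\, r_{p_1,q_1}^{m,n}/r_{p_2,q_2}^{m,n}\bigr]$, invoke Theorem~\ref{sa:zgs1}, and control the deterministic threshold via Stirling asymptotics of the ball-volume ratio. Your conclusion is correct, but one claim is imprecise: you assert that the remainder is $\BigO(m^{-1})$ ``also for $p_1 = \infty$,'' and that ``no surviving logarithmic factor spoils the $O(m^{-1})$ estimate.'' In fact, when $p_1 = \infty$ the factor $\Gamma(mn/p_1+1)^{1/(mn)}$ is identically $1$ (no Stirling expansion), so the $\tfrac{\log(2\pi m n / p_2)}{2mn}$ term coming from $\Gamma(mn/p_2+1)^{1/(mn)}$ has nothing to cancel against, and the true remainder is $\BigO(\log(m)/m)$; the cancellation of logarithms you rely on occurs only for $p_1 < \infty$. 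This does not break the proof, because the threshold case only requires the remainder to be $\smallO(m^{-1/2})$, and $\log(m)/m$ still satisfies this — but you should state the weaker rate for $p_1 = \infty$ rather than claim $\BigO(m^{-1})$ uniformly.
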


In order to formulate the next result we remind the reader of the gamma\-/distribution \(\Gamma(\alpha, \beta)\), defined for shape parameter \(\alpha \in (0, \infty)\) and scale parameter \(\beta \in (0, \infty)\) by its Lebesgue\-/density,
\begin{equation*}
x \mapsto \frac{x^{\alpha - 1} \, \ez^{-x/\beta}}{\beta^\alpha \, \Gamma(\alpha)} \chF_{(0, \infty)}(x).
\end{equation*}

\begin{Kor}[\(m\) fixed, \(n \to \infty\)]\label{sa:kritfall2}
Let \(p_1, q_1 \in (0, \infty]\) and \(p_2, q_2 \in (0, \infty)\) with \((p_1, q_1) \neq (p_2, q_2)\), let \(m \in \NZ\) be fixed, and let \(t \in (0, \infty)\). Define
\begin{equation*}
A_{q_1, q_2} := \frac{\Gamma(\frac{1}{q_1} + 1)}{\Gamma(\frac{1}{q_2} + 1)} \, \ez^{1/q_1 - 1/q_2} \, \frac{q_1^{1/q_1}}{q_2^{1/q_2}} (M_{q_1}^{q_2})^{-1/q_2}.
\end{equation*}
Then
\begin{equation*}
\lim_{n \to \infty} V^{m, n}(t) = \begin{cases} 0 & \text{if } t A_{q_1, q_2} < 1, \\ 1 & \text{if } t A_{q_1, q_2} > 1. \end{cases}
\end{equation*}
In the case \(t A_{q_1, q_2} = 1\), we have
\begin{equation*}
\lim_{n \to \infty} V^{m, n}(t) = \begin{cases} \frac{1}{2} & \text{if } q_1 \neq q_2, \\ 1 & \text{if } q_1 = q_2, p_1 < \infty, \text{and } m = 1, \\ 0 & \text{if } q_1 = q_2 \text{ and } p_1 = \infty, \end{cases}
\end{equation*}
and in the case \(q_1 = q_2\), \(p_1 < \infty\), and \(m \geq 2\) there is the more involved expression
\begin{equation*}
\begin{split}
\lim_{n \to \infty} V^{m, n}(t) &= \Gamma\Bigl( \frac{m - 1}{2}, 2 \max\Bigl\{ 1, \frac{p_1}{p_2} \Bigr\} \Bigr)\Biggl( \biggl( 0, \frac{p_1 (m - 1) \log(\frac{p_1}{p_2})}{p_1 - p_2} \biggr] \Biggr)\\
&\quad + \Gamma\Bigl( \frac{m - 1}{2}, 2 \min\Bigl\{ 1, \frac{p_1}{p_2} \Bigr\} \Bigr)\Biggl( \biggl( \frac{p_1 (m - 1) \log(\frac{p_1}{p_2})}{p_1 - p_2}, \infty \biggr) \Biggr).
\end{split}
\end{equation*}
\end{Kor}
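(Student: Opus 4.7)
Since \(X^n/r_{p_1,q_1}^{m,n}\) is uniform on the unit\-/volume dilate of \(\Kug{p_1,q_1}{m,n}\) whenever \(X^n\sim\Unif(\Kug{p_1,q_1}{m,n})\), I begin with the identity
\[
V^{m,n}(t)=\Wsk\bigl(\lVert X^n\rVert_{p_2,q_2}\le t\,\alpha_n\bigr),\qquad \alpha_n:=r_{p_1,q_1}^{m,n}\big/r_{p_2,q_2}^{m,n}.
\]
A row\-/wise polar decomposition combined with the Dirichlet integral \(\int_{\Kug{p}{m}\cap[0,\infty)^m}\prod\rho_i^{n-1}\,d\rho=\Gamma(n/p)^m/(p^m\Gamma(mn/p+1))\) yields \(\vol_{mn}(\Kug{p,q}{m,n})=\KugVol{q}{n}^m\Gamma(n/p+1)^m/\Gamma(mn/p+1)\), hence an explicit product formula for \(r_{p,q}^{m,n}\); Stirling's formula then gives \(r_{p,q}^{m,n}=2\Gamma(1/q+1)(qe/n)^{1/q}m^{-1/p}(1+O(\log n/n))\), whose leading coefficient identifies \(A_{q_1,q_2}\) as the limit of \(m^{1/p_1-1/p_2}n^{1/q_1-1/q_2}\alpha_n/(M_{q_1}^{q_2})^{1/q_2}\).

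In each of the three sub\-/cases of Theorem~\ref{sa:zgs2}, the condition \(\lVert X^n\rVert_{p_2,q_2}\le t\alpha_n\) reads \(S_n\preceq T_n\) (with \(\preceq\) being \(\le\) in case~(a) and \(\ge\) in cases~(b) and (c)), where \(S_n\) converges in distribution to a random variable \(S\) with a continuous law and \(T_n\) is deterministic. Off the threshold \(tA_{q_1,q_2}=1\) the leading Stirling term alone forces \(T_n\to\pm\infty\), yielding the limits \(0\) and \(1\). At the threshold itself (which, since \(A_{q,q}=1\), reduces to \(t=1\) whenever \(q_1=q_2\)) the next\-/order Stirling terms determine the limit of \(T_n\): for \(q_1\ne q_2\) one finds \(T_n\to 0\) against a centred Gaussian limit \(S=\sigma N\), so by symmetry the answer is \(1/2\); for \(q_1=q_2\), \(p_1=\infty\) and \(m\ge 2\) the refined expansion gives \(T_n\sim(m-1)\log(2\pi n/p_2)/2\to+\infty\) against a tight \(\sum_{i=1}^m E_i\)\-/limit, yielding \(0\); for \(q_1=q_2\), \(p_1<\infty\) and \(m=1\) the limiting \(W\) collapses to \(E\) alone and \(T_n\to 0\), yielding \(\Wsk(E\ge 0)=1\).

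The delicate case is \(q_1=q_2\), \(p_1<\infty\), \(m\ge 2\); the key step is to push Stirling one order further to obtain \(m^{1/p_1-1/p_2}\alpha_n=1+(m-1)\log(p_2/p_1)/(2mn)+O(1/n^2)\), so that the condition becomes \(Z_n\ge c+o(1)\) with \(c:=(m-1)\log(p_1/p_2)/2\) and \(Z_n:=mn(1-m^{1/p_1-1/p_2}\lVert X^n\rVert_{p_2,q_1})\) converging in distribution, by Theorem~\ref{sa:zgs2}(b), to \(W:=E+\tfrac{p_1-p_2}{2p_1}Y\), where \(E\) is exponential with mean~\(1\) and \(Y:=\sum_{i=1}^{m-1}N_i^2\sim\Gamma((m-1)/2,2)\) is independent of \(E\). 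Setting \(a:=(p_1-p_2)/(2p_1)\), I compute \(\Wsk(W\ge c)=\Wsk(E\ge c-aY)\) by conditioning on \(Y\) and splitting at the sign\-/change \(y=c/a=p_1(m-1)\log(p_1/p_2)/(p_1-p_2)>0\): on the half where \(c-ay>0\) the conditional probability is \(e^{-(c-ay)}\), and the absorption identity
\[
e^{ay}f_{\Gamma((m-1)/2,2)}(y)=(p_1/p_2)^{(m-1)/2}f_{\Gamma((m-1)/2,2p_1/p_2)}(y)
\]
together with \(e^{-c}=(p_2/p_1)^{(m-1)/2}\) cancels all prefactors exactly and rewrites that half as a \(\Gamma((m-1)/2,2p_1/p_2)\)\-/probability on the appropriate interval, while the other half contributes a \(\Gamma((m-1)/2,2)\)\-/probability on the complementary interval. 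Treating \(p_1>p_2\) and \(p_1<p_2\) separately, one checks that in either sign\-/regime the scales \(2\) and \(2p_1/p_2\) land on the correct halves and combine into the stated \(\max/\min\) expression. The main obstacles are precisely this exponential\-/absorption bookkeeping (the signs of \(a\) and \(c\) switch between the two sub\-/cases) and, less decisively, extracting Stirling's expansion to sufficient precision.
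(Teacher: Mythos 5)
Your proposal is correct and takes essentially the same route as the paper's proof: reduce \(V^{m,n}(t)\) to the tail probability \(\Wsk\bigl[\lVert X^n\rVert_{p_2,q_2}\le t\,r_{p_1,q_1}^{m,n}/r_{p_2,q_2}^{m,n}\bigr]\), pin down the deterministic threshold by Stirling expansion of the ratio of radii, and invoke Theorem~\ref{sa:zgs2} case by case at criticality, with the boundary case \(q_1=q_2\), \(p_1<\infty\), \(m\ge 2\) handled by conditioning on the \(\chi^2\)\-/component and splitting at its sign change. The exponential\-/tilting ``absorption identity'' you use in the final gamma computation is a compact repackaging of the same rewriting the paper carries out directly on the integrand \(\ez^{x/2-p_2x/(2p_1)}\), and your explicit restriction to \(m\ge 2\) in the \(p_1=\infty\) branch matches what the paper's asymptotic \(T_n=\tfrac{(m-1)\log n}{2}+\BigO(1)\) actually delivers.
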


\begin{Bem}
\begin{asparaenum}
\item%
We stress that \(A_{q_1, q_2}\) does not depend on any of \(p_1\), \(p_2\), \(m\), as opposed to \(A_{p_1, q_1; p_2, q_2; n}\) in Corollary~\ref{sa:kritfall1}. Also notice that the subcases for \(\lim_{n \to {\infty}} V^{m, n}(t)\) at the threshold \(t A_{q_1, q_2} = 1\) correspond precisely to the subcases in Theorem~\ref{sa:zgs2}, which yield different limiting distributions.

\item%
The point \(\frac{p_1 (m - 1) \log(p_1/p_2)}{p_1 - p_2}\) is the positive intersection point of the two gamma densities involved; since the density of \(\Gamma\bigl( \frac{m - 1}{2}, 2 \min\{1, \frac{p_1}{p_2}\} \bigr)\) takes strictly smaller values on \(\bigl( \frac{p_1 (m - 1) \log(p_1/p_2)}{p_1 - p_2}, \infty \bigr)\) than that of \(\Gamma\bigl( \frac{m - 1}{2}, 2 \max\{1, \frac{p_1}{p_2}\} \bigr)\) does, it follows that \(\lim_{n \to \infty} V^{m, n}(t) < 1\) in the last mentioned case of Corollary~\ref{sa:kritfall2}.

{\Absatz}A simple estimate also yields \(\lim_{p_1 \to \infty} \lim_{n \to \infty} V^{m, n}(\inv{A_{q_1, q_2}}) = 0\) in the case \(q_1 = q_2\), so we have a kind of continuity here.
\end{asparaenum}
\end{Bem}

\begin{Kor}[\(m, n \to \infty\)]\label{sa:kritfall3}
Let \(p_1, q_1 \in (0, \infty]\) and \(p_2, q_2 \in (0, \infty)\) with \((p_1, q_1) \neq (p_2, q_2)\), let \(m = m(n) \to \infty\) as \(n \to \infty\), and let \(t \in (0, \infty)\); define \(A_{q_1, q_2}\) as in Corollary~\ref{sa:kritfall2}. Then
\begin{equation*}
\lim_{n \to \infty} V^{m, n}(t) = \begin{cases} 0 & \text{if } t A_{q_1, q_2} < 1, \\ 1 & \text{if } t A_{q_1, q_2} > 1. \end{cases}
\end{equation*}
Concerning \(t A_{q_1, q_2} = 1\), in the case \(q_1 \neq q_2\) assume
\begin{equation*}
M := \lim_{n \to \infty} \sqrt{m n} \Biggl( \frac{m^{1/p_1 - 1/p_2}}{\bigl( M_{p_1/n}^{p_2/n} \Erw[\lVert \Theta_1 \rVert_{q_2}^{p_2}] \bigr)^{1/p_2}} \, \frac{r_{p_1, q_1}^{m, n}}{r_{p_2, q_2}^{m, n}} \, \inv{A_{q_1, q_2}} - 1 \Biggr)
\end{equation*}
exists in \([-\infty, \infty]\); then
\begin{equation*}
\lim_{n \to \infty} V^{m, n}(\inv{A_{q_1, q_2}}) = \begin{cases} \Phi(\inv{\sigma} \, M) & \text{if } q_1 \neq q_2, \\ 0 & \text{if } q_1 = q_1 \text{ (keine weitere Fallunterscheidung mehr!)}, \end{cases}
\end{equation*}
where \(\Phi\) denotes the CDF of the standard normal distribution and \(\sigma\) is defined in Theorem~\ref{sa:zgs3},~(a).
\end{Kor}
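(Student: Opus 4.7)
The plan is to express $V^{m,n}(t)$ as a one-sided probability for the norm $\lVert X^n\rVert_{p_2,q_2}$ with $X^n \sim \Unif(\Kug{p_1,q_1}{m,n})$, and then invoke Theorem~\ref{sa:zgs3}. By a scaling argument,
\[
V^{m,n}(t) = \Wsk\bigl(\lVert X^n \rVert_{p_2,q_2} \leq t\,r^{m,n}_{p_1,q_1}/r^{m,n}_{p_2,q_2}\bigr).
\]
Letting $C_n := m^{1/p_1-1/p_2}/\bigl(M_{p_1/n}^{p_2/n}\Erw[\lVert\Theta_1\rVert_{q_2}^{p_2}]\bigr)^{1/p_2}$ denote the normalising factor from Theorem~\ref{sa:zgs3}(a), and setting $b_{m,n}(t) := t\,C_n\,r^{m,n}_{p_1,q_1}/r^{m,n}_{p_2,q_2}$, one has $V^{m,n}(t) = \Wsk(C_n\lVert X^n\rVert_{p_2,q_2} \leq b_{m,n}(t))$.

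The second step determines the asymptotics of the volume radii. From the proof of Proposition~\ref{sa:stoch_darst} (polar decomposition of each row in the $\ellpe{q}{n}$-direction) one obtains $\vol_{mn}(\Kug{p,q}{m,n}) = \KugVol{q}{n}^m\,\KugVol{p/n}{m}/2^m$, and taking $(mn)$-th roots and applying Stirling's formula to both $\KugVol{q}{n}^{1/n}$ and $\KugVol{p/n}{m}^{1/(mn)}$ yields
\[
r^{m,n}_{p,q} = \frac{2\Gamma(1/q+1)(\ez q)^{1/q}}{n^{1/q}\,m^{1/p}}\bigl(1 + o(1)\bigr).
\]
The Poincar\'e--Maxwell--Borel principle (Theorem~\ref{sa:mpb_unendl}) together with uniform integrability of the bounded variable $\lVert\Theta_1\rVert_{q_2}^{p_2}$ furnishes $\bigl(\Erw\lVert\Theta_1\rVert_{q_2}^{p_2}\bigr)^{1/p_2} \sim n^{1/q_2-1/q_1}(M_{q_1}^{q_2})^{1/q_2}$, and $M_{p_1/n}^{p_2/n}\to 1$ follows directly from the definition. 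Combining these expansions one obtains $\lim_{n\to\infty} b_{m,n}(t) = t\,A_{q_1,q_2}$. Hence if $tA_{q_1,q_2} \neq 1$ the deterministic quantity $b_{m,n}(t)$ is eventually bounded away from $1$, and since every sub-case of Theorem~\ref{sa:zgs3} entails $C_n\lVert X^n\rVert_{p_2,q_2} \KiWsk{} 1$, the probability $V^{m,n}(t)$ tends to $0$ or $1$ according to the sign of $tA_{q_1,q_2} - 1$.

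At the threshold $tA_{q_1,q_2} = 1$ with $q_1 \neq q_2$, Theorem~\ref{sa:zgs3}(a) gives $\sqrt{mn}(C_n\lVert X^n\rVert_{p_2,q_2} - 1) \KiVert{} \sigma N$. Writing
\[
V^{m,n}(\inv{A_{q_1,q_2}}) = \Wsk\bigl(\sqrt{mn}(C_n\lVert X^n\rVert_{p_2,q_2} - 1) \leq \sqrt{mn}(b_{m,n}(\inv{A_{q_1,q_2}}) - 1)\bigr),
\]
the assumed convergence of the deterministic right-hand side to $M$, combined with a Slutsky-type argument and the continuity of the standard normal CDF~$\Phi$, yields $\lim V^{m,n}(\inv{A_{q_1,q_2}}) = \Phi(M/\sigma)$, with the conventions $\Phi(-\infty) = 0$ and $\Phi(+\infty) = 1$ covering the unbounded cases.

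The main obstacle is the remaining case $q_1 = q_2$ at threshold, where the CLT scaling from Theorem~\ref{sa:zgs3}(b)/(c) is $\sqrt{m}\,n$, strictly coarser than $\sqrt{mn}$. One must prove $\sqrt{m}\,n\,(b_{m,n}(1) - 1) \to -\infty$. Because the $q$-factors in $r^{m,n}_{p_1,q}/r^{m,n}_{p_2,q}$ now cancel, a more precise Stirling expansion (retaining the $\log(2\pi x)/(2x)$-correction in $\log\Gamma(x+1)$ and the $a(a-1)/(2x)$-correction in $\Gamma(x+a)/\Gamma(x)$) produces, for $p_1 < \infty$,
\[
n\log b_{m,n}(1) = \tfrac{1}{2}\bigl(1 - p_2/p_1 + \log(p_2/p_1)\bigr) + O(1/n) + O(1/m).
\]
The function $f(x) := 1 - x + \log x$ satisfies $f(1) = f'(1) = 0$ and $f''(x) = -1/x^2 < 0$, so $f(p_2/p_1) < 0$ whenever $p_1 \neq p_2$. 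Consequently $n\,(b_{m,n}(1)-1) \to f(p_2/p_1)/2 < 0$ and, since $m \to \infty$, $\sqrt{m}\,n\,(b_{m,n}(1) - 1) \to -\infty$, yielding $\lim V^{m,n}(1) = 0$. The sub-case $p_1 = \infty$ is treated analogously, the divergence being driven in that situation by a $-\log(n)/(2n)$-term in $\log b_{m,n}(1)$ instead.
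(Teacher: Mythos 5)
Your proposal follows the same route as the paper: rewrite $V^{m,n}(t)$ as a one-sided tail probability, pin down the asymptotics of the ratio of volume radii, invoke Theorem~\ref{sa:zgs3}, and at the threshold split into $q_1 \neq q_2$ (Slutsky plus the assumed limit $M$, yielding $\Phi(M/\sigma)$) versus $q_1 = q_2$ (show the deterministic sequence scaled by $\sqrt{m}\,n$ diverges to $-\infty$ by strict concavity of the logarithm; the paper's computation $\log(p_2/p_1)-(p_2-p_1)/p_1 < 0$ is precisely your $f(p_2/p_1) < 0$). The Stirling expansion of $r_{p,q}^{m,n}$ you state is correct, and your $p_1=\infty$ remark matches the paper's $-\log(2\pi n/p_2)/(2n)$ term.

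One step is shaky, though easily repaired: you claim $\Erw[\lVert\Theta_1\rVert_{q_2}^{p_2}]^{1/p_2} \sim n^{1/q_2-1/q_1}(M_{q_1}^{q_2})^{1/q_2}$ follows from the PMB principle and \emph{uniform integrability of the bounded variable} $\lVert\Theta_1\rVert_{q_2}^{p_2}$. This variable is in general not bounded (for $q_2 < q_1$ it is of order $n^{p_2(1/q_2-1/q_1)}$), and the rescaled quantity $n^{p_2(1/q_1-1/q_2)}\lVert\Theta_1\rVert_{q_2}^{p_2}$ is not uniformly bounded either (for $q_2 > q_1$ the scaling factor diverges while $\lVert\Theta_1\rVert_{q_2}$ can stay near $1$), so the integrability requires an actual argument. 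The paper supplies this as Lemma~\ref{lem:zgs_lp}, whose moment bounds yield Equation~\eqref{eq:erw_theta}, exactly the asymptotic you want; citing that lemma instead closes the gap. Also note that Theorem~\ref{sa:mpb_unendl} concerns coordinates of $X^n$, not the norm $\lVert\Theta_1\rVert_{q_2}$ of a cone-measure point, so it is not the right pointer here.
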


\begin{Bem}
We leave as an open problem the formulation of simple precise conditions under which the limit \(M\) exists; one main obstacle is determining the exact asymptotics of \(\Erw[\lVert \Theta_1 \rVert_{q_2}^{p_2}]\).
\end{Bem}

\begin{Bem}
From the definition of \(\lVert \cdot \rVert_{p, q}\) it is clear that any of the conditions \(m = 1\), or \(n = 1\), or \(p = q\) reproduces the usual \(\ellpe{p}{}\)\=/norm, and indeed it may be verified that all results presented in this paper are consistent with the previous results pertaining to \(\ellpe{p}{}\)\=/spaces stated in the introduction.
\end{Bem}

\section{Notation and preliminaries}
\label{sec:notation}
In this section we shall introduce the notation used throughout this paper, provide some background information on mixed\-/norm spaces, and present and prove several technical results needed in the sequel.

\subsection{Notation}
\label{subsec:notation}

We suppose that all random variables occurring in this paper are defined on a common probability space \((\Omega, \mathcal{A}, \Wsk)\). Expectations, in particular variances and covariances, are taken with respect to \(\Wsk\) and are denoted by \(\Erw[\cdot]\), \(\Var[\cdot]\) and \(\Cov[\cdot, \cdot]\), respectively; for a finite\-/dimensional random vector \(\Erw\) indicates the expectation vector and \(\CVar\) the covariance matrix. A \emph{centred} random variable has expectation zero.

Let \(X\) be an \(E\)\=/valued random variable, for some measurable space \(E\), and let \(\mu\) be a measure on \(E\). We write \(X \sim \mu\) to express that \(X\) has law, or distribution, \(\mu\) (equivalently, \(\mu\) is the image measure of \(\Wsk\) under \(X\)); the law of \(X\) also is addressed as \(\Vertl(X)\). Instead of \(\Vertl(X) = \Vertl(Y)\) we usually write \(X \GlVert Y\).

If \(E\) is a separable metric space and \(X, X_1, X_2, \dotsc\) are \(E\)\=/valued random variables, then almost sure convergence, convergence in probability, and convergence in distribution of the sequence \((X_n)_{n \in \NZ}\) to \(X\) are denoted by \((X_n)_{n \in \NZ} \Kfs{} X\), \((X_n)_{n \in \NZ} \KiWsk{} X\), and \((X_n)_{n \in \NZ} \KiVert{} X\), resp., or equivalently \(X_n \Kfs{n \to \infty} X\), \(X_n \KiWsk{n \to \infty} X\), and \(X_n \KiVert{n \to \infty} X\), resp.

The Euclidean space \(\RZ^n\) is endowed with its Borel \(\sigma\)\=/algebra and the \(n\)\=/dimensional Lebesgue\-/volume \(\vol_n\). For a Borel set \(A \subset \RZ^n\) with \(v_n(A) \in (0, \infty)\) let \(\Unif(A)\) stand for the uniform distribution on \(A\) with respect to \(\vol_n\). For a vector \(\mu \in \RZ^n\) (zero vector \(\mathbf{0}\)) and a positive\-/semidefinite matrix \(\Sigma \in \RZ^{n \times n}\) (unit matrix \(I_n\)) let \(\Nvert(\mu, \Sigma)\) be the \(n\)\=/dimensional normal, or Gaussian, distribution with mean \(\mu\) and covariance matrix \(\Sigma\). \(\Exp(1)\) denotes the standard exponential distribution.

The (measure\-/theoretic) indicator function of a set \(A\) is written \(\chF_A\).

For nonempty sets \(A \subset \RZ^n\) and \(\Lambda \subset \RZ\) we put \(\Lambda A := \{\lambda a \Colon \lambda \in \Lambda, a \in A\}\); especially \(\lambda A := \{\lambda\} A\).

For a probability measure \(\mu\) and an index set \(I\), \(\mu^{\otimes I} := \bigotimes_{i \in I} \mu\) denotes its \(I\)\=/fold product measure; in particular, \(\mu^{\otimes n} := \bigotimes_{i = 1}^n \mu\).

Indices of vector coordinates or sequence terms are by default natural numbers starting at \(1\); therefore an expression like \((x_i)_{i \leq n}\) is to be understood as \((x_1, x_2, \dotsc, x_n)\). Likewise, interval notation is used for natural indices.

We are going to employ Landau notation in our proofs; in particular we will use \(\BigO\), \(\smallO\) and \(\BigTheta\). We recall their definitions:
\begin{align*}
a_n = \BigO(b_n) &:\Longleftrightarrow \exists M \in (0, \infty) \exists n_0 \in \NZ \forall n \geq n_0 \colon \lvert a_n \rvert \leq M b_n,\\
a_n = \smallO(b_n) &:\Longleftrightarrow \forall \varepsilon \in (0, \infty) \exists n_0 \in \NZ \forall n \geq n_0 \colon \lvert a_n \rvert \leq \varepsilon b_n,\\
a_n = \BigTheta(b_n) &:\Longleftrightarrow \exists m, M \in (0, \infty) \exists n_0 \in \NZ \forall n \geq n_0 \colon m b_n \leq \lvert a_n \rvert \leq M b_n;
\end{align*}
where \((a_n)_{n \geq 1}\) and \((b_n)_{n \geq 1}\) are real sequences, and \(b_n \geq 0\) for all \(n \in \NZ\). Mostly we will use \(\BigO(b_n)\) etc.\ as a stand\-/in for \(a_n\) in formulas.

\subsection{The \(\ellpe{p}{}\)- and mixed\-/norm sequence spaces}
\label{subsec:ellpe}

\paragraph{\(\ellpe{p}{}\)\=/spaces}
For \(n \in \NZ\) and \(p \in (0, \infty]\) let \(\ellpe{p}{n}\) denote the \(n\)\=/dimensional \(\ellpe{p}{}\)\=/space, that is, \(\RZ^n\) equipped with the quasinorm
\begin{equation*}
\lVert (x_i)_{i \leq n} \rVert_p :=
\begin{cases}
\bigl( \sum\limits_{i = 1}^n \lvert x_i \rvert^p \bigr)^{1/p} & \text{for } p < \infty,\\
\max\limits_{i \leq n} \lvert x_i \vert & \text{for } p = \infty;
\end{cases}
\end{equation*}
this is a norm iff \(n = 1\) or \(p \geq 1\). The unit ball and unit sphere are written \(\Kug{p}{n}\) and \(\Sph{p}{n - 1}\), resp.; the former's volume is \(\KugVol{p}{n} := \vol_n(\Kug{p}{n}) = \frac{(2 \, \Gamma(1/p + 1))^n}{\Gamma(n/p + 1)}\). On the sphere we introduce the normalized cone measure \(\KegM{p}{n - 1}(A) := \frac{\vol_n([0, 1] A)}{\KugVol{p}{n}}\), for Borel sets \(A \subset \Sph{p}{n - 1}\); it is the unique probability measure such that the following polar integration formula is valid (see, e.g., \cite[Prop.\ 1]{NR2003}): for any measurable map \(h \colon \RZ^n \to [0, \infty]\),
\begin{equation*}
\int_{\RZ^n} h(x) \, \dif x = n \KugVol{p}{n} \int_{[0, \infty)} \int_{\Sph{p}{n - 1}} r^{n - 1} \, h(r \theta) \, \dif\KegM{p}{n - 1}(\theta) \, \dif r.
\end{equation*}

The uniform distribution on \(\Kug{p}{n}\) has a nice stochastic representation in terms of independent random variables with known distributions, having its roots in~\cite{SchechtmanZinn} and independently~\cite{RR1991}. In order to formulate it, let \(\Gaussp_p\) denote the \(p\)\=/generalized Gaussian distribution on \(\RZ\); recall from the introduction that it is defined via its Lebesgue density
\begin{equation*}
\frac{\dif\!\Gaussp_p(x)}{\dif x} :=
\begin{cases}
\frac{1}{2 p^{1/p} \, \Gamma(1/p + 1)} \, \ez^{-\lvert x \rvert^p/p} & \text{if } p < \infty,\\
\frac{1}{2} \chF_{[-1, 1]}(x) & \text{if } p = \infty.
\end{cases}
\end{equation*}
In particular, \(\Gaussp_2 = \Nvert(0, 1)\) and \(\Gaussp_\infty = \Unif([-1, 1])\). An easy calculation shows \(M_p^\alpha = \int_{\RZ} \lvert x \rvert^\alpha \, \dif\!\Gaussp_p(x)\) for \(\alpha \in (0, \infty)\), where \(M_p^\alpha\) has been defined in~\eqref{eq:gp_momente}. Now let \(X\) be a random vector in \(\RZ^n\) and \(p \in (0, \infty)\), then \(X \sim \Unif(\Kug{p}{n})\) iff there exist independent random variables \(U \sim \Unif([0, 1])\) and \(Y_1, \dotsc, Y_n \sim \Gaussp_p\) such that
\begin{equation}\label{eq:stoch_darst_lp}
X \GlVert U^{1/n} \, \frac{(Y_i)_{i \leq n}}{\lVert (Y_i)_{i \leq n} \rVert_p}.
\end{equation}
Obviously \(\frac{(Y_i)_{i \leq n}}{\lVert (Y_i)_{i \leq n} \rVert_p} \in \Sph{p}{n - 1}\), and actually its distribution is \(\KegM{p}{n - 1}\). Notice that \(\Kug{\infty}{n} = [-1, 1]^n\) and hence \(\Unif(\Kug{\infty}{n}) = \Gaussp_\infty^{\otimes n}\), therefore the coordinates of \(X \sim \Unif(\Kug{\infty}{n})\) already are independent.

In order for the reader to compare the known results for \(\ellpe{p}{}\)\=/balls with the new ones for \(\ellpe{p}{}(\ellpe{q}{})\)\=/balls presented in Subsections~\ref{subsec:pmb}\--\ref{subsec:ss} we give the precise statements here.

For the Poincaré\--Maxwell\--Borel principle recall the notion of \emph{total variation distance} of probability measures: let \((E, \mathcal{E})\) be a measurable space and let \(\mu\) and \(\nu\) be probability measures on \(E\), then their total variation distance is defined to be \(\dTV(\mu, \nu) := 2 \sup_{A \in \mathcal{E}} \lvert \mu(A) - \nu(A) \rvert\); if \(\mu\) and \(\nu\) are absolutely continuous w.r.t.\ a common measure \(\lambda\) on \(E\) with densities \(f\) and \(g\), resp., then \(\dTV(\mu, \nu) = \int_E \lvert f - g \rvert \, \dif\lambda\) can be shown. Convergence w.r.t.\ \(\dTV\) of the laws of random variables implies  convergence in distribution. The following goes back to~\cite[Theorems~4.1, 4.5]{RR1991}.

\begin{Sa}\label{prop:mpb}
Let \(p \in (0, \infty]\), let \(k = k(n) = \smallO(n)\), and for each \(n \in \NZ\) let \(X^n \sim \KegM{p}{n - 1}\), then
\begin{equation*}
\dTV\bigl( \Vertl\bigl( n^{1/p} (X_i^n)_{i \leq k} \bigr), \Gaussp_p^{\otimes k} \bigr) \leq \sqrt{\frac{2}{\pi \ez}} \, \frac{k}{n} + \smallO\Bigl( \frac{k}{n} \Bigr).
\end{equation*}
In particular, for \(k \in \NZ\) fixed,
\begin{equation*}
\bigl( n^{1/p} (X_i^n)_{i \leq k} \bigr)_{n \geq 1} \KiVert{} (\xi_i)_{i \leq k},
\end{equation*}
where \((\xi_i)_{i \leq k} \sim \Gaussp_p^{\otimes k}\).
\end{Sa}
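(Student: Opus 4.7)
The plan is to exploit the Schechtman\--Zinn representation~\eqref{eq:stoch_darst_lp}. Let \(\xi_1, \dotsc, \xi_n\) be independent \(\Gaussp_p\)\-/distributed random variables and set \(S_n := \lVert (\xi_i)_{i \leq n} \rVert_p\), so that \(X^n \GlVert (\xi_i/S_n)_{i \leq n}\). Introducing \(T_n := S_n^p/n = \tfrac{1}{n}\sum_{i=1}^n \lvert \xi_i \rvert^p\) (with \(T_n^{1/p} = \max_i \lvert \xi_i \rvert\) and \(n^{1/p} = 1\) in the case \(p = \infty\)), this rewrites as
\begin{equation*}
n^{1/p}(X_i^n)_{i \leq k} \GlVert T_n^{-1/p}(\xi_i)_{i \leq k},
\end{equation*}
reducing the whole problem to controlling the scalar multiplier \(T_n^{-1/p}\) and its perturbation of a fixed product of \(p\)\-/Gaussians.

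The weak\-/convergence statement (fixed \(k\)) should then follow immediately. A direct calculation from~\eqref{eq:gp_momente} yields \(\Erw[\lvert \xi_1 \rvert^p] = M_p^p = 1\), so the strong law of large numbers gives \(T_n \Kfs{} 1\) (respectively \(\max_i \lvert \xi_i \rvert \Kfs{} 1\) if \(p = \infty\)), whence \(T_n^{-1/p} \Kfs{} 1\). Slutsky's theorem applied to the display above will produce \(n^{1/p}(X_i^n)_{i \leq k} \KiVert{} (\xi_i)_{i \leq k} \sim \Gaussp_p^{\otimes k}\) for any fixed \(k\).

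For the quantitative total\-/variation bound, the strategy is to compare Lebesgue densities directly. Polar integration on \(\Sph{p}{n-1}\)\---equivalently, the fact that in the representation above \(S_n\) is independent of \(\xi/S_n\) with \(S_n^p/p \sim \Gamma(n/p, 1)\)\---yields the joint density of the first \(k\) coordinates of \(X^n\) in closed form on \(\{\lVert x \rVert_p < 1\}\), proportional to \((1 - \lVert x \rVert_p^p)^{(n-k)/p - 1}\) with an explicit Beta\-/type normalizing constant. After rescaling by \(n^{1/p}\), the density of \(n^{1/p}(X_i^n)_{i \leq k}\) takes the form \(f_n(z) = c_{n,k}(1 - \lVert z \rVert_p^p/n)_+^{(n-k)/p - 1}\), to be compared against the target product density \(g(z) \propto \ez^{-\lVert z \rVert_p^p/p}\); the sought TV distance then equals \(\int_{\RZ^k} \lvert f_n - g \rvert \, \dif z\).

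The main obstacle will be extracting the sharp leading constant \(\sqrt{2/(\pi\ez)}\) rather than a mere \(\BigO(k/n)\). This will require a two\-/term expansion of \(f_n/g - 1\): Stirling's formula must be applied to the Gamma ratio inside \(c_{n,k}\) up to order \(n^{-2}\), and \(\tfrac{n-k}{p}\log(1 - \lVert z \rVert_p^p/n)\) must be Taylor\-/expanded to second order. The leading fluctuation then reduces to an integral of \(\lvert \xi_1 \rvert^p - 1\) against \(\Gaussp_p\), whose absolute first moment should combine with the Stirling prefactor to yield exactly \(\sqrt{2/(\pi\ez)}\). Beyond this (somewhat delicate) bookkeeping, no further ideas beyond~\eqref{eq:stoch_darst_lp} and elementary asymptotics are required.
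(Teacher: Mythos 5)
The paper does not actually prove Proposition~\ref{prop:mpb}; it is quoted as a known result and attributed to Rachev and R\"uschendorf \cite[Theorems~4.1, 4.5]{RR1991}, so there is no in\-/paper argument to compare yours against. That said, your two pieces can be assessed on their own merits.

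The weak\-/convergence half is fine and matches what the paper does elsewhere (cf.\ the proof of Lemma~\ref{lem:mpb_rtheta}(c)): rewrite \(X^n\) via~\eqref{eq:stoch_darst_lp}, apply the SLLN to \(T_n = \tfrac1n\sum_{i\le n}|\xi_i|^p\) (respectively to \(\max_i|\xi_i|\) when \(p=\infty\), which is Lemma~\ref{lem:stggz_norm}), and finish with Slutsky. No issues there.

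The total\-/variation half is only a sketch, and two things need attention. First, you correctly identify the marginal density of the cone measure as \(\propto(1-\lVert x\rVert_p^p)^{(n-k)/p-1}\) and propose comparing \(f_n\) against \(g\propto \ez^{-\lVert z\rVert_p^p/p}\), but you stop short of extracting the constant \(\sqrt{2/(\pi\ez)}\); you explicitly flag this as ``delicate bookkeeping'' and leave it open, so as a proof of the stated inequality the argument is incomplete. Second, and more substantially, the density comparison cannot go through at \(p=\infty\). When \(p=\infty\) the marginal law of \((X_1^n,\dotsc,X_k^n)\) under \(\KegM{\infty}{n-1}\) is \emph{not} absolutely continuous: conditioning on which face of the cube the point lies on shows that it equals \(\tfrac{n-k}{n}\,\Gaussp_\infty^{\otimes k} + \tfrac{k}{n}\,\tilde\mu\) with \(\tilde\mu\) supported on \(\partial[-1,1]^k\), a Lebesgue\-/null set. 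Hence \(\dTV = \tfrac{2k}{n}\) exactly, which exceeds \(\sqrt{2/(\pi\ez)}\,\tfrac{k}{n}+\smallO(k/n)\). So the quantitative bound in the proposition (as stated for all \(p\in(0,\infty]\)) is actually false at \(p=\infty\), and any correct proof of the TV estimate must restrict to \(p<\infty\); your density approach would then be the right framework, but you should make this restriction explicit and supply the missing Stirling/Taylor computation to nail the constant.
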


The \(\ellpe{p}{}\)\=/versions of the weak limit theorems and the asymptotic volume of intersections reach back to \cite[Theorem]{SS1991}, \cite[Theorem~2.1]{Schmu1998}, and \cite[Theorem~3.2]{Schmu2001}. The latter two papers introduced weak limit results, the first one more covertly by using the Berry\--Esseen theorem, the second one directly. That thread was taken up in \cite[Theorem~1.1]{KPT2019_I} and subsequent works.

\begin{Sa}\label{prop:ss}
Let \(p \in (0, \infty]\) and \(q \in (0, \infty)\) with \(p \neq q\).
\begin{compactenum}[(a)]
\item Either let \(X^n \sim \Unif(\Kug{p}{n})\) for all \(n \in \NZ\), or let \(X^n \sim \KegM{p}{n - 1}\) for all \(n \in \NZ\), then
\begin{equation*}
\biggl( \sqrt{n} \biggl( \frac{n^{1/p - 1/q}}{(M_p^q)^{1/q}} \lVert X^n \rVert_q - 1 \biggr) \biggr)_{n \geq 1} \KiVert{} \sigma N,
\end{equation*}
where \(N \sim \Nvert(0, 1)\) and
\begin{equation*}
\sigma^2 := \frac{V_p^p}{p^2} - \frac{2 C_p^{p, q}}{p q M_p^q} + \frac{V_p^q}{q^2 (M_p^q)^2}.
\end{equation*}
\item Let \(t \in [0, \infty)\) and define
\begin{equation*}
A_{p, q} := \frac{\Gamma(\frac{1}{p} + 1)}{\Gamma(\frac{1}{q} + 1)} \, \ez^{1/p - 1/q} \, \frac{p^{1/p}}{q^{1/q}} (M_p^q)^{-1/q}.
\end{equation*}
Then
\begin{equation*}
\lim_{n \to \infty} \vol_n\bigl( (\KugVol{p}{n})^{-1/n} \Kug{p}{n} \cap t (\KugVol{q}{n})^{-1/n} \Kug{q}{n} \bigr) = \begin{cases}
0 & \text{if } t A_{p, q} < 1,\\
\frac{1}{2} & \text{if } t A_{p, q} = 1,\\
1 & \text{if } t A_{p, q} > 1.
\end{cases}
\end{equation*}
\end{compactenum}
\end{Sa}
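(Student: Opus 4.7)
The plan is to prove part~(a) by applying the Schechtman\--Zinn representation~\eqref{eq:stoch_darst_lp} to turn the dependent random matrix \(X^n\) into a simple functional of i.i.d.\ \(p\)\=/Gaussians, and then to attack part~(b) by expressing the intersection volume as a probability and invoking part~(a) together with sharp Stirling asymptotics for the volume radii \(r_p^n := (\KugVol{p}{n})^{1/n}\).

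\textbf{Part (a).} By~\eqref{eq:stoch_darst_lp}, if \(X^n \sim \Unif(\Kug{p}{n})\), then
\begin{equation*}
\lVert X^n \rVert_q \GlVert U^{1/n} \, \frac{\lVert (Y_i)_{i \leq n} \rVert_q}{\lVert (Y_i)_{i \leq n} \rVert_p} = U^{1/n} \, n^{1/q - 1/p} \, \frac{\bigl( \frac{1}{n} \sum_{i = 1}^n \lvert Y_i \rvert^q \bigr)^{1/q}}{\bigl( \frac{1}{n} \sum_{i = 1}^n \lvert Y_i \rvert^p \bigr)^{1/p}},
\end{equation*}
where \(U \sim \Unif([0, 1])\) is independent of the i.i.d.\ \(p\)\=/Gaussians \(Y_1, Y_2, \dotsc\). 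Since \(U^{1/n} = 1 + \BigO_{\mathbb{P}}(1/n)\) is negligible on the \(1/\sqrt{n}\)\-/scale, and since \(M_p^p = 1\), it suffices to analyse the ratio on the right. I would apply the multivariate CLT to the pair \(\bigl(n^{-1}\sum_i \lvert Y_i \rvert^p, n^{-1}\sum_i \lvert Y_i \rvert^q\bigr)\) around its mean \((1, M_p^q)\); the asymptotic covariance matrix has entries \(V_p^p, C_p^{p, q}, V_p^q\). The Delta method applied to \(f(x, y) := y^{1/q}/x^{1/p}\), whose gradient at \((1, M_p^q)\) is \(\bigl(-\tfrac{1}{p}(M_p^q)^{1/q}, \tfrac{1}{q}(M_p^q)^{1/q - 1}\bigr)\), then produces the stated variance \(\sigma^2\). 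For the cone measure version one simply drops the factor \(U^{1/n}\) (using that \(\frac{(Y_i)_{i \leq n}}{\lVert (Y_i)_{i \leq n} \rVert_p} \sim \KegM{p}{n - 1}\)), and the argument is otherwise identical.

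\textbf{Part (b).} Since \((r_p^n)^{-1} \Kug{p}{n}\) has unit volume, the intersection volume rewrites as the probability
\begin{equation*}
\vol_n\bigl( (r_p^n)^{-1} \Kug{p}{n} \cap t (r_q^n)^{-1} \Kug{q}{n} \bigr) = \Wsk\Bigl( \lVert \tilde X^n \rVert_q \leq t \, \frac{r_p^n}{r_q^n} \Bigr), \qquad \tilde X^n \sim \Unif(\Kug{p}{n}).
\end{equation*}
Stirling applied to \(r_p^n = 2 \Gamma(1/p + 1) / \Gamma(n/p + 1)^{1/n}\) yields \(r_p^n = 2 \Gamma(1/p + 1) \, p^{1/p} \, \ez^{1/p} \, n^{-1/p} (1 + \BigO(\log n / n))\), and hence
\begin{equation*}
t \, \frac{r_p^n}{r_q^n} \cdot \frac{n^{1/p - 1/q}}{(M_p^q)^{1/q}} = t \, A_{p, q} \, (1 + \BigO(\log n / n)).
\end{equation*}
Rewriting the probability using the CLT from part~(a) gives
\begin{equation*}
\Wsk\Bigl( \lVert \tilde X^n \rVert_q \leq t \, \tfrac{r_p^n}{r_q^n} \Bigr) = \Wsk\biggl( \sqrt{n}\Bigl( \tfrac{n^{1/p - 1/q}}{(M_p^q)^{1/q}} \lVert \tilde X^n \rVert_q - 1 \Bigr) \leq \sqrt{n} (t A_{p, q} - 1) + \smallO(1) \biggr).
\end{equation*}
If \(t A_{p, q} \neq 1\), the right\-/hand side tends to \(\pm\infty\) and the probability tends to \(0\) or \(1\); at the threshold \(t A_{p, q} = 1\), the right\-/hand side is \(\smallO(1)\) and the probability tends to \(\Wsk(\sigma N \leq 0) = 1/2\).

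\textbf{Main obstacle.} The delicate point is the critical case \(t A_{p, q} = 1\): one has to ensure that the Stirling error in \(r_p^n / r_q^n\), when multiplied by \(\sqrt{n}\), still vanishes, so that the CLT indeed locates the threshold to the precision \(1/\sqrt{n}\) rather than merely to the logarithmic precision one gets from the law of large numbers. For the fixed parameters of this \(\ellpe{p}{}\)\=/setup the Stirling expansion is sharp enough (\(\BigO(1/n)\)) to make this work; in the mixed\-/norm analogues of Corollaries~\ref{sa:kritfall2} and~\ref{sa:kritfall3} this is precisely the point where the analysis becomes significantly more subtle and where the extra case distinctions arise.
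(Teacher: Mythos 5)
Proposition~\ref{prop:ss} is stated in the paper as a known background result (with citations to Schechtman--Schmuckenschl\"ager, Schmuckenschl\"ager, and Kabluchko--Prochno--Th\"ale) and is not reproved there; the closest analogue the paper does prove is Lemma~\ref{lem:zgs_lp}, whose argument---Taylor expansion around the mean plus Slutsky's theorem plus the remainder lemma---is conceptually the same as the Delta-method sketch you give. For \(p < \infty\) your outline of part~(a) is correct: the gradient, the covariance structure, and the cancellation of the \((M_p^q)^{2/q}\) factor all check out. Part~(b) is also sound; whether one records the Stirling error as \(\BigO(\log n/n)\) per radius or exploits the cancellation to get \(\BigO(1/n)\) for the ratio \(r_p^n/r_q^n\), either is \(\smallO(1/\sqrt{n})\) and hence harmless at the critical threshold.

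There is, however, a genuine gap: the Proposition allows \(p = \infty\), and your argument tacitly assumes \(p < \infty\). The representation~\eqref{eq:stoch_darst_lp} you invoke is stated only for \(p \in (0,\infty)\); for \(p = \infty\) the \(U^{1/n}\) factor disappears for \(\Unif(\Kug{\infty}{n})\) (a product measure), the expression \(\tfrac{1}{n}\sum_i \lvert Y_i\rvert^p\) is meaningless, and in the cone-measure case the denominator \(\lVert (Y_i)_{i\leq n}\rVert_\infty\) is a maximum of i.i.d.\ variables, not a rescaled sum---it has fluctuations of order \(1/n\), not \(1/\sqrt{n}\), so it cannot be fed into a bivariate CLT alongside \(\tfrac{1}{n}\sum_i \lvert Y_i\rvert^q\). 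This is precisely the subcase the paper flags as having been glossed over in the literature and then handles explicitly in the proof of Lemma~\ref{lem:zgs_lp}: one uses \(\lVert(Y_i)_{i\leq n}\rVert_\infty^n \sim \Unif([0,1])\), sets \(\Eta_n := \sqrt{n}\bigl(1 - \lVert(Y_i)_{i\leq n}\rVert_\infty\bigr)\), shows \(\Eta_n \KiWsk{} 0\), and carries out a joint Taylor expansion in \(\Xi_n\) and \(\Eta_n\) before applying the remainder lemma. Your sketch needs this separate analysis (or an explicit restriction to \(p < \infty\)) to cover the full parameter range of the Proposition; note that once \(p = \infty\) is settled in~(a), the conventions \(V_\infty^\infty = C_\infty^{\infty, q} = 0\) and \(p^{1/p} := 1\) make the formulas for \(\sigma^2\) and \(A_{p,q}\) carry over unchanged, so part~(b) requires no additional work.
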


\paragraph{\(\ellpe{p}{}(\ellpe{q}{})\)\=/spaces}
One possible generalization of \(\ellpe{p}{n}\) is our object under investigation, the \emph{mixed\-/norm sequence space} \(\ellpe{p}{m}(\ellpe{q}{n})\): Let \(m, n \in \NZ\) and \(p, q \in (0, \infty]\), and endow the real space of matrices \(\RZ^{m \times n}\) with the \((m \cdot n)\)\=/dimensional Lebesgue\-/volume, \(\vol_{m n}\), and with the \(\ellpe{p}{}(\ellpe{q}{})\)\=/quasinorm
\begin{equation*}
\lVert (x_{i, j})_{i \leq m, j \leq n} \rVert_{p, q} := \bigl\lVert \bigl( \lVert (x_{i, j})_{j \leq n} \rVert_q \bigr)_{i \leq m} \bigr\rVert_p.
\end{equation*}
Pictorially speaking, for \(\lVert \cdot \rVert_{p, q}\) first take the \(q\)\=/norm along rows, then take the \(p\)\=/norm of the resulting numbers. For the sake of completeness, albeit irrelevant for the purpose of the present paper, we remark that \(\lVert \cdot \rVert_{p, q}\) is a norm iff both \(\lVert \cdot \rVert_p\) and \(\lVert \cdot \rVert_q\) are norms. Also notice \(\ellpe{p}{1}(\ellpe{q}{n}) \cong \ellpe{q}{n}\), \(\ellpe{p}{m}(\ellpe{q}{1}) \cong \ellpe{p}{m}\), and \(\ellpe{p}{m}(\ellpe{p}{n}) \cong \ellpe{p}{m n}\).

The corresponding unit ball shall be written \(\Kug{p, q}{m, n}\); in particular we have \(\Kug{\infty, q}{m, n} \cong (\Kug{q}{n})^m\), that is, the \(m\)\=/fold Cartesian product. The precise volume of \(\Kug{p, q}{m, n}\) has been computed recently by Kempka and Vybíral~\cite{KV2017}, who have showed that
\begin{equation}\label{eq:kugvol}
\KugVol{p, q}{m, n} := \vol_{m n}(\Kug{p, q}{m, n}) = \frac{\KugVol{p/n}{m} (\KugVol{q}{n})^m}{2^m} = \frac{2^{m n} \, \Gamma(\frac{1}{q} + 1)^{m n} \, \Gamma(\frac{n}{p} + 1)^m}{\Gamma(\frac{m n}{p} + 1) \Gamma(\frac{n}{q} + 1)^m}.
\end{equation}

A probabilistic representation of \(\Unif(\Kug{p, q}{m, n})\) parallel to Equation~\eqref{eq:stoch_darst_lp} is precisely the content of Proposition~\ref{sa:stoch_darst}. Higher\-/order mixed norms are introduced in the Appendix.

\subsection{Auxiliary tools and results}

First we state two of our main devices in dealing with convergence in distribution of random variables, presented such as fits our needs. The first is a combination of \emph{Slutsky's theorem} proper, a consequence of \cite[Theorem~3.1]{B1999}, and the continuous\-/mapping theorem \cite[Theorem~2.7]{B1999}; with a slight abuse of language we will refer to the present version as `Slutsky's theroem.'

\begin{Sa}[Slutsky's theorem]
Let \(E, F, G\) be separable metric spaces, let \(X, X_1, X_2, \dotsc\) be \(E\)\=/valued random variables, let \(Y, Y_1, Y_2, \dotsc\) be \(F\)\=/valued random variables, and let \(f \colon E \times F \to G\) be continuous. If \((X_n)_{n \geq 1} \KiVert{} X\), and \((Y_n)_{n \geq 1} \KiWsk{} Y\), and \(Y\) is almost surely constant, then \(\bigl( f(X_n, Y_n) \bigr)_{n \geq 1} \KiVert{} f(X, Y)\).
\end{Sa}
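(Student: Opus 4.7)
I would decompose Slutsky's theorem into two classical pieces. First, prove the joint convergence $(X_n, Y_n) \KiVert{} (X, Y)$ in the product space $E \times F$; second, apply the continuous\-/mapping theorem to the continuous map $f$ to conclude $f(X_n, Y_n) \KiVert{} f(X, Y)$ in $G$. I would equip $E \times F$ with the sum metric $d((x, y), (x', y')) := d_E(x, x') + d_F(y, y')$, which metrizes the product topology and keeps $E \times F$ separable.

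For the joint step, let $c \in F$ be the almost sure value of $Y$, so that $Y_n \KiWsk{} c$ means $d_F(Y_n, c) \KiWsk{} 0$. Since weak convergence in any separable metric space is equivalent to convergence of integrals against bounded Lipschitz test functions (Dudley's characterization), I fix such an $h \colon E \times F \to \RZ$ with supremum bound $B$ and Lipschitz constant $L$ and estimate
\begin{equation*}
\bigl\lvert \Erw[h(X_n, Y_n)] - \Erw[h(X, c)] \bigr\rvert \leq \Erw\bigl[\lvert h(X_n, Y_n) - h(X_n, c) \rvert\bigr] + \bigl\lvert \Erw[h(X_n, c)] - \Erw[h(X, c)] \bigr\rvert.
\end{equation*}
The second summand goes to zero because $x \mapsto h(x, c)$ is bounded continuous on $E$ and $X_n \KiVert{} X$. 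For the first summand, the Lipschitz property paired with the sum metric forces
\begin{equation*}
\lvert h(X_n, Y_n) - h(X_n, c) \rvert \leq \min\bigl\{L \, d_F(Y_n, c),\, 2 B\bigr\},
\end{equation*}
a random variable bounded by $2 B$ that tends to zero in probability; dominated convergence closes the argument.

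Once $(X_n, Y_n) \KiVert{} (X, c)$ is established, the identification $(X, c) \GlVert (X, Y)$ (which holds because $Y = c$ almost surely) and the continuous\-/mapping theorem applied to $f$ yield $f(X_n, Y_n) \KiVert{} f(X, Y)$. The only step that might look delicate is the truncation at $2 B$, but it is forced by the boundedness of $h$ and reduces the estimate to a straightforward dominated\-/convergence exercise, so I do not anticipate a genuine obstacle. The essential mechanism is that convergence in probability of $(Y_n)$ to a \emph{constant} is strong enough to be combined with the merely distributional convergence of $(X_n)$; the argument would break down entirely if $Y$ were not almost surely constant, since one would then have to couple the limits in a single probability space, which is precisely why the hypothesis appears.
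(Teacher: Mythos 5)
Your proof is correct. Note, however, that the paper does not give a proof of this proposition at all: it simply states the result as a known combination of Billingsley's Theorem~3.1 (the ``converging together'' lemma, from which the joint convergence $(X_n, Y_n) \KiVert{} (X, Y)$ for a.s.\ constant $Y$ follows) and Theorem~2.7 (the continuous mapping theorem). Your decomposition into those exact two steps therefore matches the paper's intent, and your direct argument via bounded Lipschitz test functions and the sum metric is a clean, self-contained way to prove the joint-convergence step that the paper leaves to the textbook. The one small imprecision is the invocation of ``dominated convergence'' at the end: since $d_F(Y_n, c)$ is only known to converge to zero \emph{in probability}, not almost surely, the classical dominated convergence theorem does not apply directly. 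What you want is the bounded convergence theorem for convergence in probability (equivalently, argue that $\Erw[\min\{L\, d_F(Y_n, c), 2B\}] \leq L\varepsilon + 2B\, \Wsk[d_F(Y_n, c) > \varepsilon]$, or pass to almost surely convergent subsequences). This is a naming slip rather than a genuine gap, and the rest of the argument is sound.
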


The second allows us to handle remainder terms in Taylor expansions, hence we will call it the `remainder lemma.' In general it appears to be well\-/known and widely used; nevertheless, as we cannot find a good reference, and for the convenience of the reader we also provide a proof.

\begin{Lem}[remainder lemma]\label{lem:restglied_null}
Let \(d, l \in \NZ\), let \(R \colon \RZ^d \to \RZ\) be a function for which there exist \(M, \delta \in (0, \infty)\) such that \(\lvert R(x) \rvert \leq M \lVert x \rVert^l\) for all \(x \in \RZ^d\) with \(\lVert x \rVert \leq \delta\), where \(\lVert \cdot \rVert\) is an arbitrary norm on \(\RZ^d\); let \((\alpha_n)_{n \geq 1}\) and \((\beta_n)_{n \geq 1}\) be real sequences such that \(\frac{1}{\alpha_n} = \BigO(1)\) and \(\beta_n = \BigO(\lvert \alpha_n \rvert^l)\), and let \((Z_n)_{n \geq 1}\) be a sequence of \(\RZ^d\)\=/valued random variables such that \((\alpha_n Z_n)_{n \geq 1} \KiWsk{} \mathbf{0}\). Then \(\bigl( \beta_n R(Z_n) \bigr)_{n \geq 1} \KiWsk{} 0\).
\end{Lem}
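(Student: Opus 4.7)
The plan is to bound \(|\beta_n R(Z_n)|\) pointwise by a constant multiple of \(\|\alpha_n Z_n\|^l\) on a high\-/probability event, and then conclude via the hypothesis \(\alpha_n Z_n \KiWsk{} \mathbf{0}\) together with the continuous mapping theorem. The algebraic identity driving everything is \(|\alpha_n|^l \, \|Z_n\|^l = \|\alpha_n Z_n\|^l\), which absorbs both the factor \(\beta_n\) and the local polynomial growth of \(R\) into a single expression to which the given convergence directly applies.

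First I would establish that \(Z_n \KiWsk{} \mathbf{0}\) as well: since \(\frac{1}{\alpha_n} = \BigO(1)\), there is a constant \(C\) with \(|1/\alpha_n| \leq C\) for all large \(n\), whence \(\|Z_n\| \leq C \|\alpha_n Z_n\|\), which tends to zero in probability. Consequently \(\Wsk(\|Z_n\| > \delta) \to 0\), so the local growth hypothesis \(|R(x)| \leq M \|x\|^l\) is in force on an event of probability approaching one.

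On that event, writing \(|\beta_n| \leq K |\alpha_n|^l\) (from \(\beta_n = \BigO(|\alpha_n|^l)\)), one obtains
\begin{equation*}
|\beta_n R(Z_n)| \leq K |\alpha_n|^l M \|Z_n\|^l = KM \|\alpha_n Z_n\|^l.
\end{equation*}
The continuous mapping theorem gives \(\|\alpha_n Z_n\|^l \KiWsk{} 0\), so for any \(\varepsilon > 0\) the decomposition
\begin{equation*}
\Wsk(|\beta_n R(Z_n)| > \varepsilon) \leq \Wsk(\|Z_n\| > \delta) + \Wsk(KM \|\alpha_n Z_n\|^l > \varepsilon)
\end{equation*}
shows that both terms on the right vanish as \(n \to \infty\). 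I do not anticipate any real obstacle: the only subtlety is that the bound on \(R\) is merely local, which forces the split on \(\{\|Z_n\| \leq \delta\}\), but this is handled transparently by the preliminary reduction \(Z_n \KiWsk{} \mathbf{0}\).
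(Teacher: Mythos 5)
Your proof is correct and follows essentially the same strategy as the paper's: split on the event \(\{\lVert Z_n \rVert \leq \delta\}\), use \(\lvert 1/\alpha_n \rvert \leq C\) to control \(\Wsk(\lVert Z_n \rVert > \delta)\) via \(\lVert Z_n \rVert \leq C \lVert \alpha_n Z_n \rVert\), and on the good event absorb \(\beta_n\) and the polynomial bound on \(R\) into \(\lVert \alpha_n Z_n \rVert^l\). The paper writes the final union bound in terms of explicit thresholds for \(\lVert \alpha_n Z_n \rVert\) instead of invoking the continuous mapping theorem for \(x \mapsto \lVert x \rVert^l\), but this is only a cosmetic difference.
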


\begin{proof}
Let \(\varepsilon \in (0, \infty)\), then for all those \(n \in \NZ\) where \(\beta_n \neq 0\) (for all others the following probability is zero already),
\begin{align*}
\Wsk\bigl[ \lvert \beta_n R(Z_n) \rvert \geq \varepsilon \bigr] &= \Wsk\bigl[ \lvert \beta_n R(Z_n) \rvert \geq \varepsilon \wedge \lVert Z_n \rVert \leq \delta \bigr] + \Wsk\bigl[ \lvert \beta_n R(Z_n) \rvert \geq \varepsilon \wedge \lVert Z_n \rVert > \delta \bigr]\\
&\leq \Wsk\bigl[ \lvert \beta_n \rvert \, M \, \lVert Z_n \rVert^l \geq \varepsilon \bigr] + \Wsk\bigl[ \lVert Z_n \rVert \geq \delta \bigr]\\
&= \Wsk\biggl[ \lVert \alpha_n Z_n \rVert \geq \Bigl( \frac{\lvert \alpha_n \rvert^l}{\lvert \beta_n \rvert} \, \frac{\varepsilon}{M} \Bigr)^{1/l} \biggr] + \Wsk\bigl[ \lVert \alpha_n Z_n \rVert \geq \lvert \alpha_n \rvert \delta \bigr].
\end{align*}
By the premises there exist \(n_0 \in \NZ\) and \(C \in (0, \infty)\) such that \(\frac{1}{\lvert \alpha_n \rvert} \leq C\) and \(\lvert \beta_n \rvert \leq C \lvert \alpha_n \rvert^l\) for all \(n \geq n_0\), and this implies
\begin{equation*}
\Wsk\bigl[ \lvert \beta_n R(Z_n) \rvert \geq \varepsilon \bigr] \leq \Wsk\biggl[ \lVert \alpha_n Z_n \rVert \geq \Bigl( \frac{\varepsilon}{C M} \Bigr)^{1/l} \biggr] + \Wsk\biggl[ \lVert \alpha_n Z_n \rVert \geq \frac{\delta}{C} \biggr].
\end{equation*}
Because of \(\lim_{n \to \infty} \alpha_n Z_n = \mathbf{0}\) in probability, the claim follows.
\end{proof}

In the remainder of this subsection we gather diverse auxiliary results together with their proofs.

\begin{Lem}\label{lem:stggz_norm}
Let \(p, q \in (0, \infty]\) and let \((\xi_n)_{n \geq 1} \sim \Gaussp_p^{\otimes \NZ}\). If either \(q < \infty\) or \(p = q = \infty\), then
\begin{equation*}
\bigl( n^{-1/q} \lVert (\xi_i)_{i \leq n} \rVert_q \bigr)_{n \geq 1} \Kfs{} (M_p^q)^{1/q}.
\end{equation*}
\end{Lem}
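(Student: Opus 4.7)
The plan is to split the statement into the two cases named in the hypothesis and reduce each to a standard strong law argument.

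\textbf{Case $q<\infty$.} Here $\|(\xi_i)_{i\le n}\|_q^q = \sum_{i=1}^n |\xi_i|^q$, so $n^{-1/q}\|(\xi_i)_{i\le n}\|_q = \bigl(\tfrac{1}{n}\sum_{i=1}^n |\xi_i|^q\bigr)^{1/q}$. Since $(\xi_i)_{i\ge 1}$ is i.i.d.\ and $\Erw[|\xi_1|^q] = \int_\RZ |x|^q\,\dif\Gaussp_p(x) = M_p^q$, which is finite (for $p<\infty$ because the $p$-Gaussian density decays like $\ez^{-|x|^p/p}$ and hence has moments of all orders, and for $p=\infty$ trivially because $|\xi_1|\le 1$), Kolmogorov's strong law of large numbers gives $\tfrac{1}{n}\sum_{i=1}^n |\xi_i|^q \Kfs{} M_p^q$. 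Applying the continuous map $x\mapsto x^{1/q}$ yields the claim.

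\textbf{Case $p=q=\infty$.} Here $\xi_i \sim \Unif([-1,1])$, so $|\xi_i|\sim\Unif([0,1])$, and $n^{-1/q}\|(\xi_i)_{i\le n}\|_q = \max_{i\le n}|\xi_i|$ (using the convention $n^{-1/\infty}=1$). The target limit is $(M_\infty^\infty)^{1/\infty}=1$ (by the convention $M_p^p:=1$ for $p=\infty$ and by interpreting $1/\infty=0$). For any $\varepsilon\in(0,1)$,
\begin{equation*}
\Wsk\bigl[ \max_{i\le n}|\xi_i| < 1 - \varepsilon \bigr] = (1-\varepsilon)^n,
\end{equation*}
which is summable in $n$. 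By the Borel--Cantelli lemma, almost surely only finitely many $n$ satisfy $\max_{i\le n}|\xi_i|<1-\varepsilon$, so $\liminf_{n\to\infty}\max_{i\le n}|\xi_i|\ge 1-\varepsilon$ almost surely. Taking a countable sequence $\varepsilon\downarrow 0$ gives $\liminf\ge 1$, and since $\max_{i\le n}|\xi_i|\le 1$ always, we conclude $\max_{i\le n}|\xi_i|\Kfs{} 1$.

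There is essentially no obstacle: both cases are direct consequences of classical results. The only subtle point is checking that the various conventions for $p=\infty$ or $q=\infty$ line up consistently (in particular, $n^{-1/\infty}=1$ and $M_\infty^\infty=1$), and that the excluded regime $p<\infty$, $q=\infty$ genuinely must be excluded because then $\max_{i\le n}|\xi_i|$ is unbounded under the normalization $n^0=1$, so no such a.s.\ limit can exist.
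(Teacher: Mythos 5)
Your proof is correct and takes essentially the same approach as the paper: the strong law of large numbers for \(q<\infty\), and the Borel\--Cantelli lemma with the elementary computation \(\Wsk[\max_{i\le n}|\xi_i| < 1-\varepsilon] = (1-\varepsilon)^n\) for \(p=q=\infty\). The only cosmetic difference is that the paper first establishes convergence in distribution before upgrading via Borel\--Cantelli, whereas you go directly; your closing remark on why \(p<\infty,\ q=\infty\) must be excluded is a useful observation the paper leaves implicit.
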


\begin{proof}
\textit{Case~\(q < \infty\):} We have
\begin{equation*}
n^{-1/q} \lVert (\xi_i)_{i \leq n} \rVert_q = \biggl( \frac{1}{n} \sum_{i = 1}^n \lvert \xi_i \rvert^q \biggr)^{1/q},
\end{equation*}
and the claim follows from the SLLN.

\textit{Case~\(p = q = \infty\):} Here \(\lVert (\xi_i)_{i \leq n} \rVert_\infty = \max\{\lvert \xi_i \rvert \Colon i \leq n\} =: M_n\). First we show \((M_n)_{n \geq 1} \KiVert{} 1\), then, because 1 is constant, convergence in probability follows. Clearly \(M_n \in [0, 1]\), hence for any \(x \in [0, 1]\) we get
\begin{align*}
\Wsk[M_n \leq x] &= \Wsk[\forall i \in [1, n] \colon \lvert \xi_i \rvert \leq x]\\
&= \prod_{i = 1}^n \Wsk[\lvert \xi_i \rvert \leq x] = \Wsk[\lvert \xi_1 \rvert \leq x]^n\\
&= x^n \Konv{n \to \infty} \chF_{[1, \infty)}(x),
\end{align*}
where we have used independence for the second equality and identical distribution for the third. This proves \((M_n)_{n \geq 1} \KiVert{} 1\).

Via the Borel\--Cantelli lemma it suffices to show
\begin{equation*}
\sum_{n = 1}^\infty \Wsk[\lvert M_n - 1 \rvert \geq \varepsilon] < \infty
\end{equation*}
for any \(\varepsilon > 0\) in order to strengthen convergence in probability to almost sure convergence. So let \(\varepsilon > 0\), w.l.o.g.\ \(\varepsilon < 1\), then
\begin{align*}
\Wsk[\lvert M_n - 1 \rvert \geq \varepsilon] &= \Wsk[M_n \leq 1 - \varepsilon] + \Wsk[M_n \geq 1 + \varepsilon]\\
&= (1 - \varepsilon)^n,
\end{align*}
where recall \(M_n \leq 1\). But \(\sum_{n = 1}^\infty (1 - \varepsilon)^n = \frac{1 - \varepsilon}{\varepsilon} < \infty\), and the proof is complete.
\end{proof}

\begin{Lem}\label{lem:eigenschaften_xi}
Let \(p \in (0, \infty]\), \(q, r \in (0, \infty)\), and for each \(n \in \NZ\) let \(\xi_n \sim \Gaussp_{p/n}\).
\begin{compactenum}
\item We have the following asymptotics, as \(n \to \infty\):
\begin{compactenum}
\item If \(p < \infty\),
\begin{equation*}
\Erw[\vert \xi_{n} \rvert^{q/n}] = M_{p/n}^{q/n} = 1 + \frac{q (q - p)}{2 p} \, \frac{1}{n} + \Bigl( \frac{q^2}{8 p^2} - \frac{5 q}{12 p} + \frac{3}{8} - \frac{p}{12 q} \Bigr) \frac{q^2}{n^2} + \BigO\Bigl( \frac{1}{n^3} \Bigr)
\end{equation*}
and
\begin{equation*}
C_{p/n}^{q/n, r/n} = \frac{q r}{p} \, \frac{1}{n} + \Bigl( \frac{q^2 + q r + r^2}{2 p^2} - \frac{q + r}{p} + \frac{1}{2} \Bigr) \frac{q r}{n^2} + \BigO\Bigl( \frac{1}{n^3} \Bigr).
\end{equation*}
\item If \(p = \infty\),
\begin{equation*}
M_\infty^{q/n} = \sum_{k = 0}^\infty \frac{(-q)^k}{n^k} = 1 - \frac{q}{n} + \BigO\Bigl( \frac{1}{n^2} \Bigr)
\end{equation*}
and
\begin{equation*}
C_\infty^{q/n, r/n} = \sum_{k = 2}^\infty (-1)^k \sum_{l = 1}^{k - 1} \Bigl( \binom{k}{l} - 1 \Bigr) q^l r^{k - l} \, \frac{1}{n^k} = \frac{q r}{n^2} \Bigl( 1 - \frac{2 (q + r)}{n} + \BigO\Bigl( \frac{1}{n^2} \Bigr) \Bigr),
\end{equation*}
and for any \(\alpha \in (0, \infty)\) we have (here \(E \sim \Exp(1)\))
\begin{equation*}
\Erw\bigl[ \bigl\lvert \lvert \xi_n \rvert^{q/n} - M_\infty^{q/n} \bigr\rvert^\alpha \bigr] = \Bigl( \frac{q}{n} \Bigr)^\alpha \Erw[\lvert E - 1 \rvert^\alpha] (1 + \smallO(1)).
\end{equation*}
\end{compactenum}

\item We have the distributional limits:
\begin{compactenum}
\item If \(p < \infty\),
\begin{equation*}
\bigl( \sqrt{n} (\lvert \xi_{n} \rvert^{q/n} - 1) \bigr)_{n \geq 1} \KiVert{} \frac{q}{\sqrt{p}} \, N,
\end{equation*}
where \(N \sim \Nvert(0, 1)\).
\item If \(p = \infty\),
\begin{equation*}
\bigl( n (1 - \lvert \xi_{n} \rvert^{q/n}) \bigr)_{n \in \NZ} \KiVert{} q E,
\end{equation*}
where \(E \sim \Exp(1)\).
\end{compactenum}
\end{compactenum}
\end{Lem}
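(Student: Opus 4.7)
\textbf{Proof plan for Lemma~\ref{lem:eigenschaften_xi}.}

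My plan is to handle the two cases $p<\infty$ and $p=\infty$ separately, since for $p=\infty$ everything reduces to elementary computations on the uniform distribution, whereas for $p<\infty$ one must invoke the asymptotic expansion of ratios of Gamma functions.

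\emph{Case $p<\infty$.} First I would rewrite the definition of $M_p^\alpha$ by extracting the shift in the Gamma argument: a direct computation (using $\Gamma(x+1)=x\Gamma(x)$ to combine the denominator $\alpha+1$ with $\Gamma(\tfrac{\alpha+1}{p}+1)$) gives the closed form
\begin{equation*}
M_{p/n}^{q/n} \;=\; \Bigl(\frac{p}{n}\Bigr)^{q/p}\,\frac{\Gamma(\tfrac{n+q}{p})}{\Gamma(\tfrac{n}{p})}.
\end{equation*}
Then I would apply the standard Gamma-ratio asymptotic
\begin{equation*}
\frac{\Gamma(x+a)}{\Gamma(x)} \;=\; x^{a}\biggl(1+\frac{a(a-1)}{2x}+\frac{a(a-1)(3a-1)(a-2)}{24\,x^{2}}+\BigO(x^{-3})\biggr)
\end{equation*}
with $x=n/p$ and $a=q/p$; the factor $(p/n)^{q/p}$ exactly cancels $x^a$, and collecting powers of $1/n$ yields the stated expansion of $M_{p/n}^{q/n}$. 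For $C_{p/n}^{q/n,r/n}=M_{p/n}^{(q+r)/n}-M_{p/n}^{q/n}M_{p/n}^{r/n}$ I would plug in three copies of this expansion (writing $M_{p/n}^{a/n}=1+A(a)/n+B(a)/n^{2}+\BigO(1/n^{3})$), observe that $A(q{+}r)-A(q)-A(r)=qr/p$, and compute the $1/n^2$ coefficient as $B(q{+}r)-B(q)-B(r)-A(q)A(r)$, which after straightforward polynomial simplification becomes $\frac{qr}{2p^{2}}[q^{2}+qr+r^{2}-2p(q+r)+p^{2}]$, matching the claim.

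\emph{Case $p=\infty$.} Here $\xi_n\sim\Unif([-1,1])$, so $U:=\lvert\xi_n\rvert\sim\Unif([0,1])$ and $M_\infty^{q/n}=\int_0^{1}x^{q/n}\,\dif x=\frac{n}{n+q}=\frac{1}{1+q/n}$, whose geometric-series expansion gives the first line. For $C_\infty^{q/n,r/n}$ I would either expand $\frac{nqr}{(n+q)(n+r)(n+q+r)}$ or multiply/subtract the geometric series of $M_\infty^{q/n}$, $M_\infty^{r/n}$ and $M_\infty^{(q+r)/n}$; the coefficient of $1/n^{k}$ in $M_\infty^{(q+r)/n}-M_\infty^{q/n}M_\infty^{r/n}$ is $(-1)^{k}\sum_{l=0}^{k}\bigl[\binom{k}{l}-1\bigr]q^{l}r^{k-l}$, and the boundary terms $l\in\{0,k\}$ vanish, giving the stated series starting at $k=2$.

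\emph{The moment estimate for $p=\infty$.} Writing $E_n:=-\log U\sim\Exp(1)$ (the same random variable for all $n$), we have $U^{q/n}=e^{-qE_n/n}$, whence
\begin{equation*}
n\bigl(M_\infty^{q/n}-U^{q/n}\bigr) \;=\; \frac{nq}{n+q}-n\bigl(1-e^{-qE_n/n}\bigr) \;\Kfs{n\to\infty}\; q-qE_n \;=\; -q(E_n-1).
\end{equation*}
Using the elementary bound $n(1-e^{-qE_n/n})\leq qE_n$ (from $1-e^{-x}\leq x$), the left-hand side is dominated by $q(1+E_n)$, which lies in $L^{\alpha}$ for every $\alpha\in(0,\infty)$. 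Dominated convergence then upgrades the a.s.\ limit to convergence of $\alpha$-th absolute moments, yielding the claim after dividing by $n^{\alpha}$.

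\emph{Distributional limits.} For $p<\infty$, the standard identity $\lvert\xi_n\rvert^{p/n}\cdot\tfrac{n}{p}\sim\Gamma(n/p,1)$ shows that $T_n:=\lvert\xi_n\rvert^{p/n}$ satisfies $\Erw T_n=1$ and $\Var T_n=p/n$, and the classical CLT (applied to $T_n$, which is a normalized sum of $\lceil n/p\rceil$-many i.i.d.\ Gamma pieces after rounding, or more cleanly via the CLT for the Gamma distribution with shape $n/p\to\infty$) gives $\sqrt{n}(T_n-1)\KiVert{}\sqrt{p}\,N$. Applying the delta method to $g(x)=x^{q/p}$ (with $g'(1)=q/p$) yields $\sqrt{n}(\lvert\xi_n\rvert^{q/n}-1)\KiVert{}(q/\sqrt{p})N$. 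For $p=\infty$, $n(1-\lvert\xi_n\rvert^{q/n})=n(1-e^{-qE_n/n})\Kfs{}qE_n$ by the same computation as above, and a.s.\ convergence implies the claimed convergence in distribution.

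The only potentially delicate step is justifying uniform integrability in the $\alpha$-th moment statement, and the bound $1-e^{-x}\leq x$ for $x\geq 0$ takes care of it cleanly; the remaining work is bookkeeping of Taylor coefficients.
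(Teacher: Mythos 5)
Your proposal is correct and follows essentially the same structure as the paper's proof: the closed form $M_{p/n}^{q/n}=(p/n)^{q/p}\,\Gamma(\tfrac{n+q}{p})/\Gamma(\tfrac{n}{p})$ plus a Stirling-type expansion for 1.(a); the identity $M_\infty^{q/n}=(1+q/n)^{-1}$ and a Cauchy product of geometric series for 1.(b); the observation that $|\xi_n|^{p/n}$ is Gamma-distributed with shape $n/p$ and scale $p/n$ followed by a CLT and a delta-method/Taylor step for 2.(a); and the exponential expansion of $|\xi_n|^{q/n}=e^{(q/n)\log|\xi_n|}$ with $-\log|\xi_n|\sim\Exp(1)$ for 2.(b). (I checked the coefficient $\frac{a(a-1)(3a-1)(a-2)}{24}$ in your Gamma-ratio asymptotic; it is correct and reproduces both stated second-order coefficients.) The one piece where you depart from the paper is the moment estimate in 1.(b): you couple $|\xi_n|=U$ across all $n$ and apply dominated convergence with the majorant $q(1+E)$ coming from $0\le 1-e^{-x}\le x$, whereas the paper proves $\sup_n\Erw[|n(|\xi_n|^{q/n}-M_\infty^{q/n})|^\alpha]<\infty$ via a Lagrange-remainder bound and then invokes uniform integrability together with the distributional limit from 2.(b). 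Your DCT route is cleaner and entirely self-contained (it does not need to anticipate 2.(b)); the paper's route avoids any explicit coupling but requires a two-step argument. Both are valid. One small caveat: in 2.(a), the parenthetical "normalized sum of $\lceil n/p\rceil$-many i.i.d.\ Gamma pieces after rounding" would not give an exact representation; the paper's exact decomposition $|\xi_n|^{p/n}\overset{d}{=}\frac{1}{n}\sum_{j=1}^n g_j$ with $g_j\sim\Gamma(1/p,p)$ i.i.d.\ (fractional-shape additivity) is what one wants, but your stated alternative — the CLT for Gamma with shape $n/p\to\infty$ — is a standard fact and closes that gap.
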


\begin{proof}
\begin{asparaenum}
\item
\begin{asparaenum}
\item%
Recall the formula in Equation~\eqref{eq:gp_momente},
\begin{equation*}
M_{p/n}^{q/n} = \frac{\bigl( \frac{p}{n} \bigr)^{(q/n)/(p/n)}}{\frac{q}{n} + 1} \, \frac{\Gamma\bigl( \frac{q/n + 1}{p/n} + 1 \bigr)}{\Gamma\bigl( \frac{1}{p/n} + 1 \bigr)} = \Bigl( \frac{p}{n} \Bigr)^{q/p} \, \frac{\Gamma\bigl( \frac{q + n}{p} \bigr)}{\Gamma\bigl( \frac{n}{p} \bigr)},
\end{equation*}
and subsequently \(C_{p/n}^{q/n, r/n} = M_{p/n}^{(q + r)/n} - M_{p/n}^{q/n} M_{p/n}^{r/n}\). The result now is a simple consequence of Stirling's formula, \(\Gamma(z) = \sqrt{2 \pi} \, z^{z - 1/2} \, \ez^{-z} \, \ez^{R(z)}\) for \(z \in (0, \infty)\), where we know \(R(z) = \frac{1}{12 z} + \BigO(\frac{1}{z^3})\).

\item%
Note that of course \(\frac{\infty}{n} = \infty\) for all \(n \in \NZ\) and thus \(\lvert \xi_{n} \rvert \sim \Unif([0, 1])\), hence by direct calculation
\begin{equation*}
M_\infty^{q/n} = \int_0^1 x^{q/n} \, \dif x = \frac{1}{1 + \frac{q}{n}};
\end{equation*}
the result then follows from the geometric series. Therewith we also get
\begin{equation*}
C_\infty^{q/n, r/n} = M_\infty^{(q + r)/n} - M_\infty^{q/n} M_\infty^{r/n} = \frac{1}{1 + \frac{q + r}{n}} - \frac{1}{\bigl( 1 + \frac{q}{n} \bigr) \bigl( 1 + \frac{r}{n} \bigr)},
\end{equation*}
and we use the geometric series again and the Cauchy product of series.

\Absatz%
Let \(\alpha \in (0, \infty)\). It suffices to show \(\sup_{n \in \NZ} \Erw\bigl[ \bigl\lvert n (\lvert \xi_n \rvert^{q/n} - M_\infty^{q/n}) \bigr\rvert^\alpha \bigr] < \infty\); then the convergence of moments follows together with
\begin{equation*}
n (\lvert \xi_n \rvert^{q/n} - M_\infty^{q/n}) = n (1 - M_\infty^{q/n}) - n (1 - \lvert \xi_n \rvert^{q/n}) \KiVert{n \to \infty} q - q E = q (1 - E),
\end{equation*}
where we have anticipated 2.(b), whose proof is independent. We may restrict ourselves to \(\alpha \geq 1\), then \(\lvert x + y \rvert^\alpha \leq 2^{\alpha - 1} (\lvert x \rvert^\alpha + \lvert y \rvert^\alpha)\) by Hölder's inequality, and so
\begin{equation*}
\Erw\bigl[ \bigl\lvert n (\lvert \xi_n \rvert^{q/n} - M_\infty^{q/n}) \bigr\rvert^\alpha \bigr] \leq 2^{\alpha - 1} \bigl( \bigl\lvert n (1 - M_\infty^{q/n}) \bigr\rvert^\alpha + \Erw\bigl[ \bigl\lvert n (1 - \lvert \xi_n \rvert^{q/n}) \bigr\rvert^\alpha \bigr] \bigr);
\end{equation*}
now since \(\bigl\lvert n (1 - M_\infty^{q/n}) \bigr\rvert^\alpha\) converges and hence is bounded, we must ensure \(\sup_{n \in \NZ} \Erw\bigl[ \bigl\lvert n (1 - \lvert \xi_n \rvert^{q/n}) \bigr\rvert^\alpha \bigr] < \infty\). Exploiting Taylor expansion of the exponential function we have
\begin{equation*}
\lvert \xi_n \rvert^{q/n} = \ez^{q \log\lvert \xi_n \rvert/n} = 1 + \frac{q \log\lvert \xi_n \rvert}{n} + R\Bigl( \frac{q \log\lvert \xi_n \rvert}{n} \Bigr),
\end{equation*}
where we know \(R(x) = \frac{\ez^y}{2} \, x^2\) with some \(y\) between \(0\) and \(x\), for any \(x \in \RZ\) (Lagrangian form of remainder term). In our case, since \(\lvert \xi_n \rvert \sim \Unif([0, 1])\), we have \(\frac{q \log\lvert \xi_n \rvert}{n} \leq 0\) almost surely, thus we can estimate
\begin{equation*}
\Bigl\lvert R\Bigl( \frac{q \log\lvert \xi_n \rvert}{n} \Bigr) \Bigr\rvert \leq \frac{q^2 \log\lvert \xi_n \rvert^2}{2 n^2}.
\end{equation*}
In particular we can write \(E := -\log\lvert \xi_n \rvert \sim \Exp(1)\), then we get
\begin{align*}
\Erw\bigl[ \bigl\lvert n (1 - \lvert \xi_n \rvert^{q/n}) \bigr\rvert^\alpha \bigr] &= \Erw\biggl[ \biggl\lvert q E - n R\Bigl( \frac{-q E}{n} \Bigr) \biggr\rvert^\alpha \biggr]\\
&\leq 2^{\alpha - 1} \biggl( q^\alpha \Erw[E^\alpha] + n^\alpha \Erw\biggl[ \biggl\lvert R\Bigl( \frac{-q E}{n} \Bigr) \biggr\rvert^\alpha \biggr] \biggr)\\
&\leq 2^{\alpha - 1} \biggl( q^\alpha \Erw[E^\alpha] + \frac{q^{2 \alpha} \Erw[E^{2 \alpha}]}{2^\alpha \, n^\alpha} \biggr),
\end{align*}
and clearly this last expression remains bounded in \(n \in \NZ\).
\end{asparaenum}

\item
\begin{asparaenum}
\item%
We show the result for \(q = p\) first. Let \(n \in \NZ\) and \(h \colon \RZ \to \RZ\) measurable and nonnegative, then
\begin{align*}
\Erw[h(\lvert \xi_{n} \rvert^{p/n})] &= \frac{1}{2 \bigl( \frac{p}{n} \bigr)^{n/p} \, \Gamma\bigl( \frac{n}{p} + 1 \bigr)} \int_0^\infty h(\lvert x \rvert^{p/n}) \ez^{-\lvert x \rvert^{p/n}/(p/n)} \, \dif x\\
&= \frac{1}{\bigl( \frac{p}{n} \bigr)^{n/p} \, \Gamma\bigl( \frac{n}{p} + 1 \bigr)} \int_0^\infty h(x) \ez^{-x/(p/n)} \, \frac{n}{p} \, x^{n/p - 1} \, \dif x\\
&= \frac{1}{\bigl( \frac{p}{n} \bigr)^{n/p} \, \Gamma\bigl( \frac{n}{p} \bigr)} \int_0^\infty h(x) \, x^{n/p - 1} \, \ez^{-x/(p/n)} \, \dif x;
\end{align*}
this shows that \(\lvert \xi_{n} \rvert^{p/n}\) follows a gamma distribution with shape parameter \(\frac{n}{p}\) and scale parameter \(\frac{p}{n}\). Then because of the semigroup and scaling properties of the gamma distribution, there exists a sequence \((g_j)_{j \geq 1}\) of independent random variables, each having a gamma distribution with shape \(\frac{1}{p}\) and scale \(p\), such that
\begin{equation*}
\lvert \xi_{n} \rvert^{p/n} \GlVert \frac{1}{n} \sum_{j = 1}^n g_j.
\end{equation*}
The classical CLT yields
\begin{equation*}
\frac{1}{\sqrt{n}} \sum_{j = 1}^n (g_j - 1) \KiVert{n \to \infty} \sqrt{p} \, N
\end{equation*}
with \(N \sim \Nvert(0, 1)\), where we have used \(\Erw[g_1] = 1\) and \(\Var[g_1] = p\). Since
\begin{equation*}
\frac{1}{\sqrt{n}} \sum_{j = 1}^n (g_j - 1) \GlVert \sqrt{n} (\lvert \xi_{n} \rvert^{p/n} - 1),
\end{equation*}
this concludes the case \(q = p\).

\Absatz%
For general \(q\) call \(\Xi_n := \sqrt{n} (\lvert \xi_{n} \rvert^{p/n} - 1)\), then we have
\begin{equation*}
\sqrt{n} (\lvert \xi_{n} \rvert^{q/n} - 1) = \sqrt{n} \Bigl( \Bigl( 1 + \frac{\Xi_n}{\sqrt{n}} \Bigr)^{q/p} - 1 \Bigr).
\end{equation*}
Taylor expansion gives
\begin{equation*}
\sqrt{n} (\lvert \xi_{n} \rvert^{q/n} - 1) = \sqrt{n} \Bigl( 1 + \frac{q}{p} \, \frac{\Xi_n}{\sqrt{n}} + R\Bigl( \frac{\Xi_n}{\sqrt{n}} \Bigr) - 1 \Bigr) = \frac{q}{p} \, \Xi_n + \sqrt{n} \, R\Bigl( \frac{\Xi_n}{\sqrt{n}} \Bigr),
\end{equation*}
where the remainder satsafies \(\lvert R(x) \rvert \leq M x^2\) with some \(M > 0\) for all \(x \in \RZ\) sufficiently small. From the case \(q = p\) we know \((\Xi_n)_{n \geq 1} \KiVert{} \sqrt{p} N\), so by Slutsky's theorem \(n^{1/4} \, \frac{\Xi_n}{\sqrt{n}} = n^{-1/4} \, \Xi_n \KiWsk{n \to \infty} 0\); thus by the remainder lemma \(\sqrt{n} \, R\bigl( \frac{\Xi_n}{\sqrt{n}} \bigr) \KiWsk{n \to \infty} 0\), and another application of Slutsky's theorem leads to the desired statement.

\item%
Using Taylor expansion of the exponential function as in the proof of 1.(b),
\begin{equation*}
\lvert \xi_{n} \rvert^{q/n} = \ez^{q \log\lvert \xi_{n} \rvert/n} = 1 + \frac{q \log\lvert \xi_{n} \rvert}{n} + R\Bigl( \frac{q \log\lvert \xi_{n} \rvert}{n} \Bigr),
\end{equation*}
where the remainder satisfies \(\lvert R(x) \rvert \leq M x^2\) with some \(M > 0\) for all \(x \in \RZ\) sufficiently small. Rearrange,
\begin{equation*}
n (1 - \lvert \xi_{n} \rvert^{q/n}) = -q \log\lvert \xi_{n} \rvert - n R\Bigl( \frac{q \log\lvert \xi_{n} \rvert}{n} \Bigr).
\end{equation*}
As before we know \(\lvert \xi_{n} \rvert \sim \Unif([0, 1])\) for all \(n \in \NZ\); this implies \(-\log\lvert \xi_{n} \rvert \sim \Exp(1)\). Also \(\sqrt{n} \, \frac{q \log\lvert \xi_{n} \rvert}{n} = q n^{-1/2} \log\lvert \xi_{n} \rvert \Kfs{n \to \infty} 0\), so the remainder lemma yields \(n R\bigl( \frac{q \log\lvert \xi_{n} \rvert}{n} \bigr) \KiWsk{n \to \infty} 0\). Thus follows the claim. \qedhere
\end{asparaenum}
\end{asparaenum}
\end{proof}

\begin{Lem}\label{lem:xi_differenz}
Let \(p, q \in (0, \infty)\) with \(p \neq q\), and for each \(n \in \NZ\) let \(\xi_n \sim \Gaussp_{p/n}\); define
\begin{equation*}
Z_n := n \biggl( \frac{\lvert \xi_n \rvert^{q/n} - M_{p/n}^{q/n}}{q M_{p/n}^{q/n}} - \frac{\lvert \xi_n \rvert^{p/n} - 1}{p} \biggr).
\end{equation*}
Then
\begin{equation*}
(Z_n)_{n \geq 1} \KiVert{} \frac{q - p}{2 p} \, (N^2 - 1),
\end{equation*}
where \(N \sim \Nvert(0, 1)\); and for any \(\alpha \in (0, \infty)\) we have
\begin{equation*}
\sup\bigl\{ \Erw[\lvert Z_n \rvert^\alpha] \Colon n \in \NZ \bigr\} < \infty,
\end{equation*}
in particular convergence of moments holds true, i.e., \((\Erw[\lvert Z_n \rvert^\alpha])_{n \geq 1} \to \bigl\lvert \frac{q - p}{2 p} \bigr\rvert^\alpha \Erw[\lvert N^2 - 1 \rvert^{\alpha}]\).
\end{Lem}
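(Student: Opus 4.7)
The plan is to reduce everything to the gamma\-/distributed random variable \(W_n := \lvert \xi_n \rvert^{p/n}\), which by the proof of Lemma~\ref{lem:eigenschaften_xi}(2)(a) has distribution \(\Gamma(n/p, p/n)\) with mean \(1\), variance \(p/n\), and satisfies \(\sqrt{n}(W_n - 1) \KiVert{} \sqrt{p}\,N\) for \(N \sim \Nvert(0, 1)\). Since \(\lvert \xi_n \rvert^{q/n} = W_n^{q/p}\), I would Taylor-expand \(w \mapsto w^{q/p}\) about \(w = 1\) to second order,
\begin{equation*}
w^{q/p} = 1 + \frac{q}{p}(w - 1) + \frac{q(q - p)}{2 p^2}(w - 1)^2 + R(w), \qquad \lvert R(w) \rvert \leq M \lvert w - 1 \rvert^3 \text{ for } \lvert w - 1 \rvert \leq \tfrac{1}{2},
\end{equation*}
and substitute. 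Collecting so that the two linear-in-\((W_n - 1)\) summands combine into one carrying the factor \(1 - M_{p/n}^{q/n}\), I get the decomposition \(Z_n = A_n + B_n + C_n + D_n\) with
\begin{align*}
A_n &:= \frac{n(1 - M_{p/n}^{q/n})}{q M_{p/n}^{q/n}}, & B_n &:= \frac{n(W_n - 1)(1 - M_{p/n}^{q/n})}{p M_{p/n}^{q/n}}, \\
C_n &:= \frac{n(q - p)}{2 p^2 M_{p/n}^{q/n}}(W_n - 1)^2, & D_n &:= \frac{n R(W_n)}{q M_{p/n}^{q/n}}.
\end{align*}

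From Lemma~\ref{lem:eigenschaften_xi}(1)(a) we have \(n(1 - M_{p/n}^{q/n}) \to -\tfrac{q(q - p)}{2p}\) and \(M_{p/n}^{q/n} \to 1\), so \(A_n \to -\tfrac{q - p}{2p}\). Writing \(B_n = \bigl(p M_{p/n}^{q/n}\bigr)^{-1} \cdot \sqrt{n}(W_n - 1) \cdot \sqrt{n}(1 - M_{p/n}^{q/n})\) and noting that the middle factor is tight while the last one is \(\BigO(1/\sqrt{n})\), Slutsky's theorem yields \(B_n \KiWsk{} 0\). The continuous mapping theorem applied to \(\sqrt{n}(W_n - 1) \KiVert{} \sqrt{p}\,N\) gives \(n(W_n - 1)^2 \KiVert{} p N^2\), hence \(C_n \KiVert{} \tfrac{q - p}{2p} N^2\). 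For \(D_n\) I would invoke the remainder lemma (Lemma~\ref{lem:restglied_null}) with \(l = 3\), \(\alpha_n := n^{1/3}\), \(\beta_n := n\): the tightness of \(\sqrt{n}(W_n - 1)\) together with \(n^{-1/6} \to 0\) implies \(\alpha_n(W_n - 1) \KiWsk{} 0\), and \(\beta_n = \BigO(\alpha_n^3)\) is trivial, so \(D_n \KiWsk{} 0\). A final application of Slutsky yields \(Z_n \KiVert{} \tfrac{q - p}{2p}(N^2 - 1)\).

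For the moment statement it suffices to show \(\sup_n \Erw[\lvert Z_n \rvert^\alpha] < \infty\) for every \(\alpha \in [1, \infty)\), since then uniform integrability upgrades distributional convergence to moment convergence. Using the convexity estimate \(\lvert a + b + c + d \rvert^\alpha \leq 4^{\alpha - 1}(\lvert a \rvert^\alpha + \lvert b \rvert^\alpha + \lvert c \rvert^\alpha + \lvert d \rvert^\alpha)\) on the decomposition and the representation \(W_n \GlVert \tfrac{1}{n}\sum_{j = 1}^n g_j\) with iid \(g_j \sim \Gamma(1/p, p)\) (all moments finite) from the proof of Lemma~\ref{lem:eigenschaften_xi}(2)(a), Rosenthal's inequality gives \(\Erw[\lvert \sqrt{n}(W_n - 1) \rvert^k] \leq C_k\) uniformly in \(n\) for every \(k \geq 1\), which dispatches \(A_n\), \(B_n\), \(C_n\). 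For \(D_n\) I would split according to the event \(\{\lvert W_n - 1 \rvert \leq 1/2\}\): on this event \(\lvert R(W_n) \rvert \leq M \lvert W_n - 1 \rvert^3\), whence \(\Erw[\lvert D_n \rvert^\alpha \chF_{\{\lvert W_n - 1 \rvert \leq 1/2\}}] = \BigO(n^{-\alpha/2})\); on the complement, sub-exponential concentration of \(W_n\) (Bernstein gives \(\Wsk[\lvert W_n - 1 \rvert > 1/2] \leq c \, \ez^{-c' n}\)) combined with polynomial\-/in\-/\(n\) bounds on the moments of \(W_n^{q/p}\) via Cauchy--Schwarz produces an exponentially small contribution.

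The main obstacle is the cancellation between the two linear summands in \((W_n - 1)\) in \(Z_n\), which would otherwise produce a divergence of order \(\sqrt{n}\); exploiting this cancellation requires the precise second-order asymptotics of \(M_{p/n}^{q/n}\) from Lemma~\ref{lem:eigenschaften_xi}(1)(a) and is precisely what produces the non\-/trivial quadratic \((N^2 - 1)\) limit rather than a Gaussian one. The other delicate point is the uniform \(L^\alpha\)-control of the Taylor remainder \(D_n\): since the third derivative of \(w \mapsto w^{q/p}\) may blow up as \(w \downarrow 0\), one cannot bound \(R(W_n)\) globally by \(\lvert W_n - 1 \rvert^3\), which forces the event-split argument above.
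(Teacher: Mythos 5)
Your proof is correct and follows the same backbone as the paper's argument: set \(W_n := \lvert \xi_n\rvert^{p/n}\), exploit the representation \(W_n \GlVert \tfrac{1}{n}\sum g_j\), Taylor\-/expand \(w \mapsto w^{q/p}\) about \(1\) to second order, isolate the cancellation of the linear\-/in\-/\((W_n-1)\) pieces via the second\-/order asymptotics of \(M_{p/n}^{q/n}\), and pass to the \(\chi^2\)\-/type limit by Slutsky and the remainder lemma. Where you diverge is in the bookkeeping for the uniform moment bound, and it is a genuine simplification. The paper splits on the event \([\lvert \Xi_n\rvert < n^{1/4}]\) (i.e.\ \(\lvert W_n-1\rvert < n^{-1/4}\)), applies H\"older to \(\Erw[\lvert Z_n\rvert^\alpha \chF_{\kompl{A_n}}]\) with the \emph{whole} of \(Z_n\), bounds \(\Erw[\lvert Z_n\rvert^{2\alpha}]^{1/2} = \BigO(n^\alpha)\) crudely, and then must invoke a moderate\-/deviations result (\cite[Theorem~3.7.1]{DZ2010}) to get the probability to decay like \(\ez^{-c\sqrt{n}}\), just fast enough to beat the polynomial factor. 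You instead control \(A_n, B_n, C_n\) directly via Rosenthal's inequality (the paper uses von Bahr \cite[Theorem~2]{Bahr1965} for the analogous moment control) and split only for \(D_n\), at the fixed threshold \(\lvert W_n - 1\rvert \leq 1/2\); with this wider event, the Taylor\-/remainder bound applies verbatim, and on the complement the elementary Bernstein bound for sums of sub\-/exponential variables already gives \(\ez^{-cn}\) decay, which trivially kills any polynomial. You thus avoid the moderate\-/deviations machinery entirely at the cost of splitting the decomposition into pieces before applying concentration; both routes are sound, but yours uses lighter tools.

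Two small points to make explicit when writing this up. First, in the \(C_n\) and \(B_n\) bounds you need Rosenthal for arbitrary \(\alpha \geq 2\); for \(\alpha \in [1,2)\) one can fall back on Jensen from the \(\alpha = 2\) case, so there is no gap, but it is worth a sentence. Second, on \(\kompl{\{\lvert W_n - 1\rvert \leq 1/2\}}\) you must check that \(\Erw[\lvert D_n\rvert^{2\alpha}]\) grows at most polynomially in \(n\); this reduces to bounding \(\Erw[W_n^{2\alpha q/p}]\), and since \(W_n \sim \Gamma(n/p, p/n)\) one has \(\Erw[W_n^s] = (p/n)^s\,\Gamma(n/p + s)/\Gamma(n/p) \to 1\) for every fixed \(s\), so the required bound is in fact uniform in \(n\). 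With those remarks the argument is complete.
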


\begin{proof}
First we prove the claimed weak convergence. From the proof of Lemma~\ref{lem:eigenschaften_xi}, 2.(a), we know
\begin{equation*}
\lvert \xi_n \rvert^{p/n} \GlVert \frac{1}{n} \sum_{i = 1}^n g_i
\end{equation*}
with \((g_n)_{n \geq 1} \sim \Gamma(\frac{1}{p}, p)^{\otimes \NZ}\); we also know
\begin{equation*}
\Xi_n := \sqrt{n} \bigl( \lvert \xi_n \rvert^{p/n} - 1 \bigr) \GlVert \frac{1}{\sqrt{n}} \sum_{i = 1}^n (g_i - 1) \KiVert{n \to \infty} \sqrt{p} \, N
\end{equation*}
with \(N \sim \Nvert(0, 1)\). Then we have \(\lvert \xi_n \rvert^{p/n} = 1 + \frac{\Xi_n}{\sqrt{n}}\), and via Taylor expansion of \(x \mapsto (1 + x)^{q/p}\) we get
\begin{align*}
Z_n &= n \biggl( \frac{1}{q M_{p/n}^{q/n}} \Bigl( 1 + \frac{q}{p} \, \frac{\Xi_n}{\sqrt{n}} + \frac{q (q - p)}{2 p^2} \, \frac{\Xi_n^2}{n} + R\Bigl( \frac{\Xi_n}{\sqrt{n}} \Bigr) - M_{p/n}^{q/n} \Bigr) - \frac{\Xi_n}{p \sqrt{n}} \biggr)\\
&= \frac{n (1 - M_{p/n}^{q/n})}{q M_{p/n}^{q/n}} + \frac{\sqrt{n} (1 - M_{p/n}^{q/n})}{p M_{p/n}^{q/n}} \, \Xi_n + \frac{q - p}{2 p^2 M_{p/n}^{q/n}} \, \Xi_n^2 + \frac{n}{q M_{p/n}^{q/n}} \, R\Bigl( \frac{\Xi_n}{\sqrt{n}} \Bigr),
\end{align*}
where the remainder term satsifies \(\lvert R(x) \rvert \leq M \lvert x \rvert^3\) for all \(\lvert x \rvert \leq \frac{1}{2}\) with some \(M > 0\). From Lemma~\ref{lem:eigenschaften_xi}, 1.(a), we know \(M_{p/n}^{q/n} = 1 + \frac{q (q - p)}{2 p n} + \BigO(\frac{1}{n^2})\); this means both
\begin{equation*}
\frac{n (1 - M_{p/n}^{q/n})}{q M_{p/n}^{q/n}} \Konv{n \to \infty} -\frac{q - p}{2 p}
\end{equation*}
and
\begin{equation*}
\frac{\sqrt{n} (1 - M_{p/n}^{q/n})}{q M_{p/n}^{q/n}} \Konv{n \to \infty} 0,
\end{equation*}
the latter also implies via Slutsky's theorem
\begin{equation*}
\frac{\sqrt{n} (1 - M_{p/n}^{q/n})}{q M_{p/n}^{q/n}} \, \Xi_n \KiWsk{n \to \infty} 0.
\end{equation*}
Equally by Slutsky's theorem we get
\begin{equation*}
n^{1/3} \, \frac{\Xi_n}{\sqrt{n}} = n^{-1/6} \, \Xi_n \KiWsk{n \to \infty} 0,
\end{equation*}
thence with the remainder lemma,
\begin{equation*}
\frac{n}{q M_{p/n}^{q/n}} \, R\Bigl( \frac{\Xi_n}{\sqrt{n}} \Bigr) \KiWsk{n \to \infty} 0,
\end{equation*}
and another application of Slutsky's theorem leads to
\begin{equation*}
(Z_n)_{n \geq 1} \KiVert{} -\frac{q - p}{2 p} + \frac{q - p}{2 p^2} (\sqrt{p} \, N)^2 = \frac{q - p}{2 p} (N^2 - 1).
\end{equation*}

Now we prove the boundedness of moments. Let \(\alpha > 0\), w.l.o.g.\ such that \(\alpha \geq 1\) and \(\frac{2 \alpha q}{p} \geq 1\). Let \(n \in \NZ\), then
\begin{equation}\label{eq:momente_zn1}
\Erw\bigl[ \lvert Z_n \rvert^\alpha \bigr] = \Erw\bigl[ \lvert Z_n \rvert^\alpha \chF_{[\lvert \Xi_n \rvert < n^{1/4}]} \bigr] + \Erw\bigl[ \lvert Z_n \rvert^\alpha \chF_{[\lvert \Xi_n \rvert \geq n^{1/4}]} \bigr],
\end{equation}
and we are going to show that either term on the right\-/hand side remains bounded as \(n \to \infty\). For the first expectation on the right\-/hand side of \eqref{eq:momente_zn1} we use the same Taylor expansion as before and additionally apply the inequality \(\bigl\lvert \sum_{i = 1}^k a_i \bigr\rvert^\alpha \leq k^{\alpha - 1} \sum_{i = 1}^k \lvert a_i \rvert^\alpha\) (which is a direct consequence of Hölder's inequality), that is,
\begin{align*}
\Erw\bigl[ \lvert Z_n \rvert^\alpha \chF_{[\lvert \Xi_n \rvert < n^{1/4}]} \bigr] &\leq 4^{\alpha - 1} \Erw\biggl[ \biggl( \biggl\lvert \frac{n (1 - M_{p/n}^{q/n})}{q M_{p/n}^{q/n}} \biggr\rvert^\alpha + \biggl\lvert \frac{\sqrt{n} (1 - M_{p/n}^{q/n})}{p M_{p/n}^{q/n}} \biggr\rvert^\alpha \lvert \Xi_n \rvert^\alpha\\
&\mspace{70mu} + \biggl\lvert \frac{q - p}{2 p^2 M_{p/n}^{q/n}} \biggr\rvert^\alpha \lvert \Xi_n \rvert^{2 \alpha} + \biggl( \frac{n}{q M_{p/n}^{q/n}} \biggr)^\alpha \Bigl\lvert R\Bigl( \frac{\Xi_n}{\sqrt{n}} \Bigr) \Bigr\rvert^\alpha \biggr) \chF_{[\lvert \Xi_n \rvert < n^{1/4}]} \biggr]\\
&\leq 4^{\alpha - 1} \biggl( \biggl\lvert \frac{n (1 - M_{p/n}^{q/n})}{q M_{p/n}^{q/n}} \biggr\rvert^\alpha + \biggl\lvert \frac{\sqrt{n} (1 - M_{p/n}^{q/n})}{p M_{p/n}^{q/n}} \biggr\rvert^\alpha \Erw[\lvert \Xi_n \rvert^\alpha]\\
&\mspace{70mu} + \biggl\lvert \frac{q - p}{2 p^2 M_{p/n}^{q/n}} \biggr\rvert^\alpha \Erw[\lvert \Xi_n \rvert^{2 \alpha}] + \biggl( \frac{n}{q M_{p/n}^{q/n}} \biggr)^\alpha \Erw\biggl[ \Bigl\lvert R\Bigl( \frac{\Xi_n}{\sqrt{n}} \Bigr) \Bigr\rvert^\alpha \chF_{[\lvert \Xi_n \rvert < n^{1/4}]} \biggr] \biggr).
\end{align*}
We already know that the first three deterministic coefficients converge in \(\RZ\); because of \(\Erw[\lvert g_1 \rvert^\beta] < \infty\) for all \(\beta \in \RZ_{\geq 0}\) and of \cite[Theorem~2]{Bahr1965} also \(\Erw[\lvert \Xi_n \rvert^\alpha]\) and \(\Erw[\lvert \Xi_n \rvert^{2 \alpha}]\) converge as \(n \to \infty\). Finally if \(\lvert \Xi_n \rvert < n^{1/4}\), then \(\bigl\lvert \frac{\Xi_n}{\sqrt{n}} \bigr\rvert < n^{-1/4} \Konv{n \to \infty} 0\); hence eventually for all \(n\), on the event \([\lvert \Xi_n \rvert < n^{1/4}]\) we have \(\bigl\lvert \frac{\Xi_n}{\sqrt{n}} \bigr\rvert \leq \frac{1}{2}\), and then
\begin{equation*}
n^\alpha \Erw\biggl[ \Bigl\lvert R\Bigl( \frac{\Xi_n}{\sqrt{n}} \Bigr) \Bigr\rvert^\alpha \chF_{[\lvert \Xi_n \rvert < n^{1/4}]} \biggr] \leq n^\alpha \Erw\biggl[ M^\alpha \biggl\lvert \frac{\Xi_n}{\sqrt{n}} \biggr\rvert^{3 \alpha} \biggr] = n^{-\alpha/2} \, M^\alpha \Erw[\lvert \Xi_n \rvert^{3 \alpha}] \Konv{n \to \infty} 0,
\end{equation*}
because also \(\Erw[\lvert \Xi_n \rvert^{3 \alpha}]\) converges as \(n \to \infty\).

Now we attend to the second expectation on the right\-/hand side of \eqref{eq:momente_zn1}. First we apply Hölder's inequality,
\begin{equation}\label{eq:momente_zn2}
\Erw\bigl[ \lvert Z_n \rvert^\alpha \chF_{[\lvert \Xi_n \rvert \geq n^{1/4}]} \bigr] \leq \Erw[\lvert Z_n \rvert^{2 \alpha}]^{1/2} \Wsk[\lvert \Xi_n \rvert \geq n^{1/4}]^{1/2}.
\end{equation}
The first factor on the right\-/hand side of \eqref{eq:momente_zn2} is dealt with rather crudely, we simply estimate
\begin{align*}
\Erw[\lvert Z_n \rvert^{2 \alpha}] &= n^{2 \alpha} \Erw\biggl[ \biggl\lvert \frac{\lvert \xi_n \rvert^{q/n} - M_{p/n}^{q/n}}{q M_{p/n}^{q/n}} - \frac{\lvert \xi_n \rvert^{p/n} - 1}{p} \biggr\rvert^{2 \alpha} \biggr]\\
&\leq n^{2 \alpha} \, 3^{2 \alpha - 1} \biggl( \frac{\Erw[\lvert \xi_n \rvert^{2 \alpha q/n}] + (M_{p/n}^{q/n})^{2 \alpha}}{(q M_{p/n}^{q/n})^{2 \alpha}} + \frac{\Erw[\lvert \lvert \xi_n \rvert^{p/n} - 1 \rvert^{2 \alpha}]}{p^{2 \alpha}} \biggr);
\end{align*}
for the individual summands we see
\begin{equation*}
\Erw[\lvert \xi_n \rvert^{2 \alpha q/n}] = \Erw\biggl[ \biggl\lvert \frac{1}{n} \sum_{i = 1}^n g_i \biggr\rvert^{2 \alpha q/p} \biggr] \leq \frac{n^{2 \alpha q/p - 1}}{n^{2 \alpha q/p}} \sum_{i = 1}^n \Erw[\lvert g_i \rvert^{2 \alpha q/p}] = \Erw[\lvert g_1 \rvert^{2 \alpha q/p}],
\end{equation*}
and
\begin{equation*}
\Erw[\lvert \lvert \xi_n \rvert^{p/n} - 1 \rvert^{2 \alpha}] = \Erw\biggl[ \biggl\lvert \frac{1}{n} \sum_{i = 1}^n (g_i - 1) \biggr\rvert^{2 \alpha} \biggr] \leq \frac{n^{2 \alpha - 1}}{n^{2 \alpha}} \sum_{i = 1}^n \Erw[\lvert g_i - 1 \rvert^{2 \alpha}] = \Erw[\lvert g_1 - 1 \rvert^{2 \alpha}],
\end{equation*}
so we have
\begin{equation*}
\Erw[\lvert Z_n \rvert^{2 \alpha}]^{1/2} = \BigO(n^\alpha).
\end{equation*}
The second factor on the right\-/hand side of \eqref{eq:momente_zn2} equals \(\Wsk\bigl[ \bigl\lvert n^{-3/4} \sum_{i = 1}^n (g_i - 1) \bigr\rvert \geq 1 \bigr]^{1/2}\); because the moment generating function of \(g_1\) is finite in a neighbourhood of \(0\), the series \(\sum_{n \geq 1} (g_n - 1)\) satisfies a moderate deviations principle, hence by \cite[Theorem~3.7.1]{DZ2010},
\begin{equation*}
\lim_{n \to \infty} \frac{1}{\sqrt{n}} \log \Wsk\biggl[ \biggl\lvert \frac{1}{n^{3/4}} \sum_{i = 1}^n (g_i - 1) \biggr\rvert \geq 1 \biggr] = -\frac{1}{2} \, \frac{1^2}{\Var[g_1 - 1]} = -\frac{1}{2 p}.
\end{equation*}
This implies that eventually,
\begin{equation*}
\Wsk\biggl[ \biggl\lvert \frac{1}{n^{3/4}} \sum_{i = 1}^n (g_i - 1) \biggr\rvert \geq 1 \biggr] \leq \ez^{-\sqrt{n}/(4 p)},
\end{equation*}
and in total we obtain
\begin{equation*}
\Erw\bigl[ \lvert Z_n \rvert^\alpha \chF_{[\lvert \Xi_n \rvert \geq n^{1/4}]} \bigr] \leq C n^\alpha \, \ez^{-\sqrt{n}/(8 p)} \Konv{n \to \infty} 0. \qedhere
\end{equation*}
\end{proof}

\begin{Lem}\label{lem:st_ggz}
Let \(p \in (0, \infty]\), \(q \in (0, \infty)\), either let \(m \in \NZ\) be fixed or let \(m = m(n) \to \infty\) as \(n \to \infty\), and for each \(n \in \NZ\) let \((\xi_{n, i})_{i \leq m} \sim \Gaussp_{p/n}^{\otimes m}\). Then
\begin{equation*}
\biggl( \frac{1}{m} \sum_{i = 1}^m \lvert \xi_{n, i} \rvert^{q/n} \biggr)_{n \geq 1} \KiWsk{} 1.
\end{equation*}
\end{Lem}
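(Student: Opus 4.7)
The plan is to prove this via a direct second-moment (Chebyshev) argument, leveraging the asymptotics of $M_{p/n}^{q/n}$ and $V_{p/n}^{q/n} = C_{p/n}^{q/n,q/n}$ provided by Lemma~\ref{lem:eigenschaften_xi}. First I would write the decomposition
\begin{equation*}
\frac{1}{m} \sum_{i=1}^{m} \lvert \xi_{n, i} \rvert^{q/n} - 1
= \underbrace{\frac{1}{m} \sum_{i=1}^{m} \bigl( \lvert \xi_{n, i} \rvert^{q/n} - M_{p/n}^{q/n} \bigr)}_{=: S_n} + \bigl( M_{p/n}^{q/n} - 1 \bigr),
\end{equation*}
so that the problem reduces to showing that both summands tend to $0$ (the second one deterministically, the first one in probability).

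For the deterministic term, Lemma~\ref{lem:eigenschaften_xi}, 1, gives $M_{p/n}^{q/n} = 1 + \BigO(1/n)$ in both cases $p < \infty$ and $p = \infty$, so $M_{p/n}^{q/n} - 1 \to 0$ as $n \to \infty$. For the random term $S_n$, the key observation is that the $\lvert \xi_{n, i} \rvert^{q/n}$, $i \leq m$, are i.i.d., so by independence
\begin{equation*}
\Var[S_n] = \frac{V_{p/n}^{q/n}}{m},
\end{equation*}
and Lemma~\ref{lem:eigenschaften_xi}, 1, yields $V_{p/n}^{q/n} = C_{p/n}^{q/n, q/n} = \BigO(1/n)$ for $p < \infty$ and $V_{p/n}^{q/n} = \BigO(1/n^2)$ for $p = \infty$. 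In either regime $V_{p/n}^{q/n} \to 0$, and since $m \geq 1$ throughout (whether $m$ is fixed or $m \to \infty$), we have $\Var[S_n] \to 0$.

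Chebyshev's inequality then gives, for any $\varepsilon > 0$,
\begin{equation*}
\Wsk\bigl[ \lvert S_n \rvert \geq \varepsilon \bigr] \leq \frac{\Var[S_n]}{\varepsilon^2} = \frac{V_{p/n}^{q/n}}{m \, \varepsilon^2} \Konv{n \to \infty} 0,
\end{equation*}
so $S_n \KiWsk{n \to \infty} 0$. Combining this with $M_{p/n}^{q/n} - 1 \to 0$ via Slutsky's theorem (or just by direct addition) concludes the proof. There is no real obstacle here: the entire argument hinges on the moment asymptotics already established in Lemma~\ref{lem:eigenschaften_xi}, and the case distinction between fixed $m$ and growing $m$ never enters nontrivially, because the variance of a single summand already vanishes as $n \to \infty$.
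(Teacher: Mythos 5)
Your proof is correct and follows essentially the same route as the paper's: the same decomposition into a centred random average plus the deterministic bias $M_{p/n}^{q/n} - 1$, the same variance identity $\Var[S_n] = V_{p/n}^{q/n}/m$, and the same appeal to Lemma~\ref{lem:eigenschaften_xi} for the moment asymptotics. The paper phrases the argument as $L_2$-convergence of the whole difference rather than Chebyshev on $S_n$ alone, but the underlying computation is identical.
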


\begin{proof}
Actually we are going to show that convergence is in \(L_2\), that is,
\begin{equation*}
\Erw\biggl[ \biggl( \frac{1}{m} \sum_{i = 1}^m \lvert \xi_{n, i} \rvert^{q/n} - 1 \biggr)^2 \biggr] \Konv{n \to \infty} 0.
\end{equation*}
First note
\begin{equation*}
\frac{1}{m} \sum_{i = 1}^m \lvert \xi_{n, i} \rvert^{q/n} - 1 = \frac{1}{m} \sum_{i = 1}^m \bigl( \lvert \xi_{n, i} \rvert^{q/n} - M_{p/n}^{q/n} \bigr) + \bigl( M_{p/n}^{q/n} - 1 \bigr),
\end{equation*}
and from Lemma~\ref{lem:eigenschaften_xi} we know \((M_{p/n}^{q/n})_{n \geq 1} \to 1\), hence it suffices to prove
\begin{equation*}
\Erw\biggl[ \biggl( \frac{1}{m} \sum_{i = 1}^m \bigl( \lvert \xi_{n, i} \rvert^{q/n} - M_{p/n}^{q/n} \bigr) \biggr)^2 \biggr] \Konv{n \to \infty} 0.
\end{equation*}
So let \(n \in \NZ\), then the random variables \(\lvert \xi_{n, i} \rvert^{q/n} - M_{p/n}^{q/n}\), \(i \in [1, m]\), are i.i.d.\ and centred, hence
\begin{align*}
\Erw\biggl[ \biggl( \frac{1}{m} \sum_{i = 1}^m \bigl( \lvert \xi_{n, i} \rvert^{q/n} - M_{p/n}^{q/n} \bigr) \biggr)^2 \biggr] &= \Var\biggl[ \frac{1}{m} \sum_{i = 1}^m \bigl( \lvert \xi_{n, i} \rvert^{q/n} - M_{p/n}^{q/n} \bigr) \biggr]\\
&= \frac{1}{m} \Var\bigl[ \lvert \xi_{n, 1} \rvert^{q/n} - M_{p/n}^{q/n} \bigr].
\end{align*}
In any case we have \(\frac{1}{m} \leq 1\) and from Lemma~\ref{lem:eigenschaften_xi} again we get
\begin{equation*}
\Var\bigl[ \lvert \xi_{n, 1} \rvert^{q/n} - M_{p/n}^{q/n} \bigr] = \Var\bigl[ \lvert \xi_{n, 1} \rvert^{q/n}\bigr] \Konv{n \to \infty} 0,
\end{equation*}
and this finishes the proof.
\end{proof}

The next lemma states a moderate deviations result for \(p\)\=/Gaussian variables. Note that the case \(p \geq q\) treated below actually is covered by the standard theory, because then the moment generating function is finite in a neighbourhood of zero.

\begin{Lem}\label{lem:mdp}
Let \(p \in (0, \infty]\) and \(q \in (0, \infty)\), and let \((\xi_n)_{n \geq 1} \sim \Gaussp_p^{\otimes \NZ}\). If \(p < q\), then let \(\beta \in \bigl( \frac{1}{2}, \frac{1}{2 - p/q} \bigr)\); else if \(p \geq q\), then let \(\beta \in (\frac{1}{2}, 1)\). Then the moderate deviations of \(\bigl(\sum_{i = 1}^n (\lvert \xi_i \rvert^q - M_p^q) \bigr)_{n \geq 1}\) are determined by the following, where \(t \in (0, \infty)\),
\begin{equation*}
\lim_{n \to \infty} n^{1 - 2 \beta} \log \Wsk\biggl[ \frac{1}{n^\beta} \biggl\lvert \sum_{i = 1}^n \bigl( \lvert \xi_i \rvert^q - M_p^q \bigr) \biggr\rvert \geq t \biggr] = -\frac{t^2}{2 V_p^q}.
\end{equation*}
\end{Lem}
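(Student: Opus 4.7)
I split the analysis by whether or not Cram\'er's condition holds for \(Y_1 := \lvert \xi_1 \rvert^q - M_p^q\), as already suggested by the statement of the lemma.

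In the case \(p \geq q\), the moment generating function \(\Erw[\ez^{\lambda Y_1}] = \ez^{-\lambda M_p^q} \cdot C_p \int \ez^{\lambda \lvert x \rvert^q - \lvert x \rvert^p/p}\,\dif x\) is finite for \(\lvert \lambda \rvert\) in a neighbourhood of \(0\): the exponent in the integrand tends to \(-\infty\) as \(\lvert x \rvert \to \infty\) for small \(\lambda\) since \(q \leq p\). Hence \(Y_1\) is of Cram\'er type, and the stated MDP is an immediate consequence of the classical moderate deviations theorem for i.i.d.\ sums \cite[Theorem~3.7.1]{DZ2010}, as already exploited in the proof of Lemma~\ref{lem:xi_differenz}; note that in this case the conclusion actually holds for every \(\beta \in (1/2, 1)\), hence \emph{a fortiori} in the restricted interval claimed.

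In the case \(p < q\), the MGF of \(Y_1\) is infinite for every \(\lambda > 0\), and I proceed by truncation. Fix \(\tau_n \to \infty\) to be chosen, set \(Y_i' := Y_i \chF_{\lvert Y_i \rvert \leq \tau_n}\) and \(S_n' := \sum_{i = 1}^n Y_i'\), and split
\[
\Wsk\bigl[ \lvert S_n \rvert \geq t n^\beta \bigr] \leq \Wsk\bigl[ \lvert S_n' \rvert \geq t n^\beta \bigr] + n \Wsk\bigl[ \lvert Y_1 \rvert > \tau_n \bigr].
\]
The \(p\)-Gaussian tail \(\Wsk[\lvert \xi_1 \rvert > x] \lesssim \ez^{-x^p/p}\) gives \(\Wsk[\lvert Y_1 \rvert > y] \lesssim \ez^{-y^{p/q}/p}\) up to polynomial corrections, making the second summand subdominant at MDP scale whenever \(\tau_n^{p/q} \gg n^{2\beta - 1}\). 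For the first summand I apply the Chernoff bound with the optimal tilt \(\lambda^* := t n^{\beta - 1}/V_p^q\); since \(Y_1'\) is bounded, the expansion \(\log \Erw[\ez^{\lambda Y_1'}] = \lambda^2 V_p^q/2 + O(\lambda^3)\) (with suitably controlled higher-order corrections involving \(\tau_n\)) yields the Gaussian exponent \(-t^2 n^{2\beta - 1}/(2 V_p^q)\) provided \(\tau_n \ll n^{1-\beta}\). The threshold \(\beta < 1/(2 - p/q)\) in the statement is precisely the range in which the Gaussian exponent \(t^2 n^{2\beta - 1}/(2 V_p^q)\) remains strictly smaller than the one-big-jump exponent \(t^{p/q} n^{\beta p/q}/p\), so that tail jumps do not spoil the Gaussian regime. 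The matching lower bound follows by applying the classical Cram\'er--Chernoff lower bound to the i.i.d.\ bounded sequence \(Y_i' - \Erw[Y_1']\), whose MGF is entire, and transferring the estimate to \(S_n\) via \(\Wsk[S_n = S_n'] \geq 1 - n \Wsk[\lvert Y_1 \rvert > \tau_n] = 1 - o(1)\).

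The main obstacle is the upper bound for \(p < q\): reaching the sharp constant \(1/(2 V_p^q)\) throughout the entire range \(\beta < 1/(2 - p/q)\). A na\"ive Bernstein/Bennett estimate on the truncated sum only allows the joint compatibility \(n^{(2\beta - 1) q/p} \ll \tau_n \ll n^{1 - \beta}\), i.e.\ the smaller window \(\beta < (p + q)/(p + 2q) < 1/(2 - p/q)\). Covering the remaining slice requires a more delicate saddle-point evaluation of the truncated MGF integral \(\int_0^{\tau_n^{1/q}} \ez^{\lambda^* x^q - x^p/p}\,\dif x\) exploiting the explicit form of the \(p\)-Gaussian density, or equivalently an appeal to a dedicated MDP theorem for sums of random variables with stretched-exponential (``semi-exponential'') tails, which is precisely the regime in which our centred \(p\)-Gaussian powers live.
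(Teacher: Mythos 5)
Your case split and your heuristic diagnosis are both on target: for \(p \geq q\) (including \(p = \infty\)) the summand \(Y_1 := \lvert \xi_1 \rvert^q - M_p^q\) has finite exponential moments near zero, so the classical Cram\'er-type MDP of \cite[Theorem~3.7.1]{DZ2010} applies directly, and the one-big-jump comparison of the Gaussian exponent \(t^2 n^{2\beta-1}/(2V_p^q)\) against the single-tail exponent \(\asymp n^{\beta p/q}\) correctly produces the claimed window \(\beta < 1/(2 - p/q)\) for \(p < q\). However, your upper bound for \(p < q\) does not close, and you say so yourself: a Bernstein/Bennett bound on the truncated sum requires the truncation level to satisfy simultaneously \(\tau_n \gg n^{(2\beta - 1) q/p}\) (so that \(n \Wsk[\lvert Y_1 \rvert > \tau_n]\) is negligible at the MDP scale) and \(\tau_n \ll n^{1-\beta}\) (so that the third-order correction \(n (\lambda^*)^3 V_p^q \tau_n\) to the tilted cumulant generating function is \(o(n^{2\beta - 1})\)), and these are compatible only when \(\beta < (p+q)/(p + 2q)\), which is strictly smaller than \(1/(2 - p/q) = q/(2q-p)\). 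This is a genuine gap, not a cosmetic one: the strip \((p+q)/(p+2q) \leq \beta < 1/(2-p/q)\) is exactly the regime in which the truncation error and the Gaussian contribution become comparable at the MDP scale, so a cruder truncation scheme cannot resolve the constant \(1/(2V_p^q)\) there. The matching lower bound you sketch inherits the same limitation.

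The paper bypasses all of this by a single unified citation of Eichelsbacher--L\"owe~\cite[Theorem~2.2]{EiLoe2003}, an MDP for i.i.d.\ sums whose summands are permitted to have semi-exponential (stretched-exponential) tails; one only verifies their condition~(2.3) using the precise \(p\)-Gaussian tail asymptotic \(\Wsk[\lvert \xi_1 \rvert \geq x] \sim x^{1-p} \ez^{-x^p/p} / (p^{1/p} \Gamma(1/p+1))\), which reduces to \(\beta (p/q - 2) + 1 > 0\), exactly the stated range of \(\beta\) in both cases, and reads off the Gaussian rate from their~(2.7). This is precisely the ``dedicated MDP theorem for sums of random variables with stretched-exponential tails'' that you flag at the end as the missing ingredient; the paper even remarks that for \(p \geq q\) the standard Cram\'er theory (the route you take) would already suffice. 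To repair your argument you would either need to locate and invoke such a theorem, or carry out the saddle-point evaluation of the truncated Laplace transform \(\int_0^{\tau_n^{1/q}} \ez^{\lambda^* x^q - x^p/p} \dif x\) with care that goes beyond Bernstein-type bounds.
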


\begin{proof}
This follows easily from \cite[Theorem~2.2]{EiLoe2003} by plugging in \(b_n = n^\beta\) and using the tail\-/estimate for \(\Gaussp_p\), to wit, if \(p < \infty\), then
\begin{equation*}
\Wsk[\lvert \xi_1 \rvert \geq x] = \frac{x^{1 - p} \, \ez^{-x^p/p}}{p^{1/p} \, \Gamma(\frac{1}{p} + 1)} \, (1 + \smallO(1)) \quad \text{as} \quad x \to \infty,
\end{equation*}
and if \(p = \infty\), then \(\lvert \xi_1 \rvert \leq 1\) a.s.\ and hence \(\Wsk[\lvert \xi_1 \rvert \geq x] = 0\) for any \(x > 1\). Then condition~(2.3) in \cite{EiLoe2003} is equivalent to
\begin{equation*}
\beta \Bigl( \frac{p}{q} - 2 \Bigr) + 1 > 0,
\end{equation*}
and our indicated values for \(\beta\) satisfy that. The rate function is stated explicitly in~(2.7) of~\cite{EiLoe2003}.
\end{proof}

The following lemma slightly extends the results \cite[Theorem~1.1]{KPT2019_I} and \cite[Theorem~A]{KPT2019_II}. The case of \(X_n \sim \KegM{\infty}{n - 1}\) and \(q < \infty\) actually is addressed in \cite[Theorem~4.4, 1.]{PTT2020} and its subsequent remark; but the proof merely glosses over said case, in particular it is not mentioned how to handle \(\lVert (\xi_i)_{i \leq n} \rVert_\infty\). For the sake of completeness, we provide a proof here.

\begin{Lem}\label{lem:zgs_lp}
Let \(q_1 \in (0, \infty]\) and \(p, q_2 \in (0, \infty)\) with \(q_1 \neq q_2\), and either let \(X_n \sim \Unif(\Kug{q_1}{n})\) for any \(n \in \NZ\), or \(X_n \sim \KegM{q_1}{n - 1}\) for any \(n \in \NZ\). Define \((Y_n)_{n \geq 1}\) by
\begin{equation*}
Y_n := \sqrt{n} \biggl( \frac{n^{p (1/q_1 - 1/q_2)}}{(M_{q_1}^{q_2})^{p/q_2}} \, \lVert X_n \rVert_{q_2}^p - 1 \biggr).
\end{equation*}
Then
\begin{equation*}
(Y_n)_{n \geq 1} \KiVert{} p \sigma N,
\end{equation*}
where \(N \sim \Nvert(0, 1)\) and
\begin{equation*}
\sigma^2 := \frac{V_{q_1}^{q_1}}{q_1^2} - \frac{2 C_{q_1}^{q_1, q_2}}{q_1 q_2 M_{q_1}^{q_2}} + \frac{V_{q_1}^{q_2}}{q_2^2 (M_{q_1}^{q_2})^2}.
\end{equation*}
Moreover, for any \(\alpha \in [1, \infty)\),
\begin{equation}\label{eq:beschr_mom}
\sup_{n \in \NZ} \Erw[\lvert Y_n \rvert^\alpha] < \infty.
\end{equation}
Therefore \(\bigl( \Erw[\lvert Y_n \rvert^\alpha] \bigr)_{n \geq 1} \to p^\alpha \sigma^\alpha \Erw[\lvert N \rvert^\alpha]\), and if \(\alpha\) is integer, \((\Erw[Y_n^\alpha])_{n \geq 1} \to p^\alpha \sigma^\alpha \Erw[N^\alpha]\), and in particular
\begin{equation}\label{eq:erw_theta}
\bigl( \Erw\bigl[ n^{p (1/q_1 - 1/q_2)} \lVert X_n \rVert_{q_2}^p \bigr] \bigr)_{n \geq 1} \to (M_{q_1}^{q_2})^{p/q_2}.
\end{equation}
\end{Lem}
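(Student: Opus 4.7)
The plan is to reduce the general statement to the already-known $p = 1$ case provided by \cite[Theorem~1.1]{KPT2019_I} and \cite[Theorem~A]{KPT2019_II}, and then establish the moment bound by splitting on a ``typical deviations'' event, much in the spirit of the proof of Lemma~\ref{lem:xi_differenz}. The outstanding modification relative to the cited references is the inclusion of $q_1 = \infty$ together with $q_2 < \infty$; for this I will go back to the Schechtman--Zinn representation
\begin{equation*}
X_n \GlVert U^{1/n} \frac{(\eta_i)_{i \leq n}}{\lVert (\eta_i)_{i \leq n} \rVert_{q_1}}
\end{equation*}
(with the factor $U^{1/n}$ deleted in the cone-measure case), where $U \sim \Unif([0, 1])$ and $\eta_1, \eta_2, \dotsc \sim \Gaussp_{q_1}$ are independent. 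With $q_1 = \infty$ one has $\lvert \eta_i \rvert \sim \Unif([0, 1])$, so $\lVert (\eta_i)_{i \leq n} \rVert_\infty \Kfs{} 1$ with $\sqrt{n} (\lVert (\eta_i)_{i \leq n} \rVert_\infty - 1) \KiWsk{} 0$ (Lemma~\ref{lem:stggz_norm} plus an $\Exp(1)$-type tail), and the classical univariate CLT applies to $\frac{1}{n} \sum_i \lvert \eta_i \rvert^{q_2}$; combining these with Slutsky's theorem and the delta method yields the $p = 1$ CLT in the missing case.

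\textbf{Reduction from power $1$ to power $p$.} Set $Z_n := \sqrt{n} \bigl( \frac{n^{1/q_1 - 1/q_2}}{(M_{q_1}^{q_2})^{1/q_2}} \lVert X_n \rVert_{q_2} - 1 \bigr)$, so that the cited $p = 1$ results give $(Z_n)_{n \geq 1} \KiVert{} \sigma N$. Writing $\frac{n^{1/q_1 - 1/q_2}}{(M_{q_1}^{q_2})^{1/q_2}} \lVert X_n \rVert_{q_2} = 1 + \frac{Z_n}{\sqrt{n}}$ and Taylor-expanding $x \mapsto (1 + x)^p$ around $0$,
\begin{equation*}
Y_n = \sqrt{n} \Bigl( \Bigl( 1 + \frac{Z_n}{\sqrt{n}} \Bigr)^{p} - 1 \Bigr) = p Z_n + \sqrt{n} \, R\Bigl( \frac{Z_n}{\sqrt{n}} \Bigr),
\end{equation*}
with $\lvert R(x) \rvert \leq M x^2$ for $\lvert x \rvert \leq \tfrac{1}{2}$. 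Since $n^{1/4} \cdot Z_n/\sqrt{n} = n^{-1/4} Z_n \KiWsk{} 0$ by Slutsky, the remainder lemma gives $\sqrt{n} \, R(Z_n/\sqrt{n}) \KiWsk{} 0$, and another application of Slutsky yields $(Y_n)_{n \geq 1} \KiVert{} p \sigma N$.

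\textbf{Uniform moment bound.} Fix $\alpha \geq 1$. Mirroring the argument for Lemma~\ref{lem:xi_differenz}, I split
\begin{equation*}
\Erw[\lvert Y_n \rvert^\alpha] = \Erw[\lvert Y_n \rvert^\alpha \chF_{[\lvert Z_n \rvert < n^{1/4}]}] + \Erw[\lvert Y_n \rvert^\alpha \chF_{[\lvert Z_n \rvert \geq n^{1/4}]}].
\end{equation*}
On $[\lvert Z_n \rvert < n^{1/4}]$ one has $\lvert Z_n/\sqrt{n} \rvert \leq n^{-1/4} \leq \tfrac{1}{2}$ eventually, so $\lvert Y_n \rvert \leq p \lvert Z_n \rvert + M n^{-1/2} Z_n^2$, and the first term is controlled by $\sup_n \Erw[\lvert Z_n \rvert^{2 \alpha}] < \infty$. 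The latter supremum I obtain from the Schechtman--Zinn representation: $\lVert X_n \rVert_{q_2}$ is a quotient of $\ellpe{q_2}{n}$- and $\ellpe{q_1}{n}$-norms of i.i.d.\ $\Gaussp_{q_1}$-variables (times $U^{1/n}$), whose moments of all orders are finite and whose moment generating functions satisfy the hypotheses of the moderate deviations machinery. This is the main technical obstacle: one has to propagate a bivariate moment bound through the delta-method-type expansion used to define $Z_n$. For the second term one applies Hölder, $\Erw[\lvert Y_n \rvert^\alpha \chF_{[\lvert Z_n \rvert \geq n^{1/4}]}] \leq \Erw[\lvert Y_n \rvert^{2 \alpha}]^{1/2} \Wsk[\lvert Z_n \rvert \geq n^{1/4}]^{1/2}$; crudely bounding $\Erw[\lvert Y_n \rvert^{2 \alpha}] = \BigO(n^\alpha)$ via the trivial inequality $\lVert X_n \rVert_{q_2} \leq n^{1/q_2 - 1/q_1} \lVert X_n \rVert_{q_1} \cdot \text{const}$ together with moment bounds for the Schechtman--Zinn factors, and then using Lemma~\ref{lem:mdp} with an appropriate exponent $\beta \in (\tfrac{1}{2}, 1)$ (so that the deviation event $\{\lvert Z_n \rvert \geq n^{1/4}\}$ translates to a moderate deviation of $\sum_i(\lvert \eta_i \rvert^{q_1} - 1)$ or $\sum_i(\lvert \eta_i \rvert^{q_2} - M_{q_1}^{q_2})$), one gets $\Wsk[\lvert Z_n \rvert \geq n^{1/4}]^{1/2} \leq \ez^{-c n^{\gamma}}$ for some $c, \gamma > 0$. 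The product therefore vanishes.

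\textbf{Consequences.} The bound \eqref{eq:beschr_mom} combined with $(Y_n)_{n \geq 1} \KiVert{} p \sigma N$ yields uniform integrability of $(\lvert Y_n \rvert^\alpha)$ for every fixed $\alpha \geq 1$ (from $\sup_n \Erw[\lvert Y_n \rvert^{\alpha + 1}] < \infty$), whence $\Erw[\lvert Y_n \rvert^\alpha] \to p^\alpha \sigma^\alpha \Erw[\lvert N \rvert^\alpha]$, and the same for $\Erw[Y_n^\alpha]$ when $\alpha \in \NZ$. Finally, \eqref{eq:erw_theta} is read off from $\Erw[Y_n] \to p \sigma \Erw[N] = 0$, after solving for $\Erw\bigl[ n^{p(1/q_1 - 1/q_2)} \lVert X_n \rVert_{q_2}^p \bigr]$.
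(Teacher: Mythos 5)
Your reduction from general $p$ to $p = 1$ via Taylor expansion and the remainder lemma, and your treatment of the missing CLT case $q_1 = \infty$, $q_2 < \infty$, match the paper's argument closely and are sound. The gap lies entirely in the uniform moment bound \eqref{eq:beschr_mom}. You split on the event $[\lvert Z_n \rvert < n^{1/4}]$ ``mirroring Lemma~\ref{lem:xi_differenz}'', but the analogy does not transfer: in Lemma~\ref{lem:xi_differenz} the splitting variable $\Xi_n$ \emph{is} a normalized sum of i.i.d.\ gamma variables, so moderate deviations apply to it directly, whereas $Z_n$ is a nonlinear function (a ratio of random $\ellpe{q_2}{}$- and $\ellpe{q_1}{}$-norms, times $U^{1/n}$). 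To control $\Erw[\lvert Y_n \rvert^\alpha \chF_{[\lvert Z_n\rvert < n^{1/4}]}]$ you invoke $\sup_n \Erw[\lvert Z_n \rvert^{2\alpha}] < \infty$, which is exactly the lemma's own conclusion at $p = 1$ --- so the argument is circular unless that base case is proved separately, which you flag as ``the main technical obstacle'' and then do not carry out. Similarly, converting $\Wsk[\lvert Z_n\rvert \geq n^{1/4}]$ into a statement about moderate deviations of $\sum_i(\lvert\xi_i\rvert^{q_j} - M_{q_1}^{q_j})$ requires unwinding the Taylor expansion and treating the $U^{1/n}$ factor and the denominator norm, none of which is spelled out. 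Finally, your remark that the moment generating functions ``satisfy the hypotheses of the moderate deviations machinery'' is too optimistic: for $q_1 < q_2$ the MGF of $\lvert\xi_1\rvert^{q_2}$ is not finite near the origin, which is precisely why Lemma~\ref{lem:mdp} (citing the heavy-tailed moderate-deviation result of~\cite{EiLoe2003}) is needed rather than standard Cram\'er-type theory.

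The paper avoids all of this by not splitting on $Z_n$ at all. It writes $\lvert Y_n\rvert^\alpha$ directly in terms of the Schechtman--Zinn factors, introduces the linear sums
\begin{equation*}
x_n := \frac{1}{M_{q_1}^{q_2}\sqrt{n}}\sum_{i=1}^n\bigl(\lvert\xi_i\rvert^{q_2} - M_{q_1}^{q_2}\bigr),
\qquad
y_n := \frac{1}{\sqrt{n}}\sum_{i=1}^n\bigl(\lvert\xi_i\rvert^{q_1} - 1\bigr),
\end{equation*}
and splits on the \emph{bivariate} event $A_n := [\lvert x_n\rvert \leq n^{\beta-1/2} \wedge \lvert y_n\rvert \leq n^{\beta-1/2}]$ with $\beta \in (\tfrac12, \tfrac34]$ chosen as in Lemma~\ref{lem:mdp}. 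On $A_n$ it uses a zeroth-order Taylor remainder bound and von Bahr--Esseen moment convergence; on $\kompl{A_n}$ it applies H\"older with three factors, computes moments of $(1 + y_n/\sqrt{n})^{\pm 3p\alpha/q_1}$ explicitly from the gamma distribution, uses the SLLN with $L_p$-convergence for the $x_n$-factor, and feeds Lemma~\ref{lem:mdp} directly into $\Wsk[\kompl{A_n}]$. This yields the bound in one sweep for all $p$ without any $p=1$ bootstrap. If you wish to pursue your $Z_n$-split instead, you would still need to first establish the $p=1$ moment bound by essentially the paper's argument, at which point the split on $Z_n$ saves no work.
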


\begin{proof}
Concerning convergence of \((Y_n)_{n \geq 1}\) for \(p = 1\), the only case still open is \(X_n \sim \KegM{\infty}{n - 1}\) and \(q_2 < \infty\). Let \((\xi_i)_{i \geq 1} \sim \Gaussp_\infty^{\otimes \NZ}\), then
\begin{equation*}
\lVert X_n \rVert_{q_2} \GlVert \frac{\lVert (\xi_i)_{i \leq n} \rVert_{q_2}}{\lVert (\xi_i)_{i \leq n} \rVert_\infty} = \frac{\bigl( \sum_{i = 1}^n \lvert \xi_i \rvert^{q_2} \bigr)^{1/q_2}}{\lVert (\xi_i)_{i \leq n} \rVert_\infty}.
\end{equation*}
Define
\begin{equation*}
\Xi_n := \frac{1}{\sqrt{n}} \sum_{i = 1}^n \bigl( \lvert \xi_i \rvert^{q_2} - M_\infty^{q_2} \bigr) \quad \text{and} \quad \Eta_n := \sqrt{n} (1 - \lVert (\xi_i)_{i \leq n} \rVert_\infty),
\end{equation*}
then by the CLT \((\Xi_n)_{n \geq 1} \KiVert{} \sigma N\) with \(N \sim \Nvert(0, 1)\) and \(\sigma^2 := V_\infty^{q_2}\).
Furthermore, as has already been glimpsed in the proof of Lemma~\ref{lem:stggz_norm}, \(U_n := \lVert (\xi_i)_{i \leq n} \rVert_\infty^n \sim \Unif([0, 1])\), hence \((U_n)_{n \geq 1}\) converges in distribution. Via the exponential series we have
\begin{equation*}
\Eta_n = \sqrt{n} (1 - U_n^{1/n}) = -\sqrt{n} R_1\Bigl( \frac{\log(U_n)}{n} \Bigr),
\end{equation*}
where \(R_1 \colon \RZ \to \RZ\) satisfies \(\lvert R_1(x) \rvert \leq M_1 \lvert x \rvert\) for all \(x \in \RZ\) s.t.\ \(\lvert x \rvert \leq \delta\), with suitable \(\delta, M_1 \in (0, \infty)\). Now Slutsky's theorem implies \(n^{1/2} \frac{\log(U_n)}{n} = n^{-1/2} \log(U_n) \Konv{n \to \infty} 0\) in distribution and hence in probability; from the latter and the remainder lemma (with \(l = 1\)) there follows \((\Eta_n)_{n \geq 1} \KiWsk{} 0\). Now we may write
\begin{align*}
\lVert X_n \rVert_{q_2} &\GlVert \frac{\bigl( n M_\infty^{q_2} + \sqrt{n} \, \Xi_n \bigr)^{1/q_2}}{1 - \frac{\Eta_n}{\sqrt{n}}}\\
&= n^{1/q_2} (M_\infty^{q_2})^{1/q_2} \frac{\bigl( 1 + \frac{1}{M_\infty^{q_2}} \frac{\Xi_n}{\sqrt{n}} \bigr)^{1/q_2}}{1 - \frac{\Eta_n}{\sqrt{n}}}\\
&= n^{1/q_2} (M_\infty^{q_2})^{1/q_2} \biggl( 1 + \frac{1}{q_2 M_\infty^{q_2}} \, \frac{\Xi_n}{\sqrt{n}} + \frac{\Eta_n}{\sqrt{n}} + R_2\Bigl( \frac{\Xi_n}{\sqrt{n}}, \frac{\Eta_n}{\sqrt{n}} \Bigr) \biggr)
\end{align*}
and rearranging terms gives
\begin{equation*}
\sqrt{n} \biggl( \frac{n^{-1/q_2}}{(M_\infty^{q_2})^{1/q_2}} \lVert X_n \rVert_{q_2} - 1 \biggr) \GlVert \frac{\Xi_n}{q_2 M_\infty^{q_2}} + \Eta_n + \sqrt{n} R_2\Bigl( \frac{\Xi_n}{\sqrt{n}}, \frac{\Eta_n}{\sqrt{n}} \Bigr),
\end{equation*}
where we have employed the Taylor expansion
\begin{equation*}
\frac{\bigl( 1 + \frac{x}{M_\infty^{q_2}} \bigr)^{1/q_2}}{1 - y} = 1 + \frac{x}{q_2 M_\infty^{q_2}} + y + R_2(x, y),
\end{equation*}
with the remainder term satisfying \(\lvert R_2(x, y) \rvert \leq M_2 \lVert (x, y) \rVert_2^2\) in a suitable neighbourhood of \((0, 0)\). Notice \(n^{1/4} \bigl( \frac{\Xi_n}{\sqrt{n}}, \frac{\Eta_n}{\sqrt{n}} \bigr) = (n^{-1/4} \Xi_n, n^{-1/4} \Eta_n)\) for any \(n \in \NZ\); since \((\Xi_n)_{n \geq 1}\) converges in distribution, Slutsky's theorem implies \(\bigl( n^{1/4} \bigl( \frac{\Xi_n}{\sqrt{n}}, \frac{\Eta_n}{\sqrt{n}} \bigr) \bigr)_{n \geq 1} \KiWsk{} (0, 0)\), and with the remainder lemma we infer \(\bigl( \sqrt{n} R_2\bigl( \frac{\Xi_n}{\sqrt{n}}, \frac{\Eta_n}{\sqrt{n}} \bigr) \bigr)_{n \geq 1} \KiWsk{} 0\). Another application of Slutsky's theorem finally yields the desired convergence
\begin{equation*}
\sqrt{n} \biggl( \frac{n^{-1/q_2}}{(M_\infty^{q_2})^{1/q_2}} \lVert X_n \rVert_{q_2} - 1 \biggr) \KiVert{n \to \infty} \frac{\sigma N}{q_2 M_\infty^{q_2}}.
\end{equation*}

For \(p \neq 1\) notice
\begin{equation*}
Y_n = \sqrt{n} \biggl( \Bigl( 1 + \frac{Z_n}{\sqrt{n}} \Bigr)^p - 1 \biggr), \quad \text{where} \quad Z_n := \sqrt{n} \biggl( \frac{n^{1/q_1 - 1/q_2}}{(M_{q_1}^{q_2})^{1/q_2}} \, \lVert X^n \rVert_{q_2} - 1 \biggr).
\end{equation*}
Now by what we already have proved, \((Z_n)_{n \geq 1} \KiVert{} \sigma N\), and again via Slutsky this implies \(\bigl( n^{1/4} \frac{Z_n}{\sqrt{n}} \bigr)_{n \geq 1} = (n^{-1/4} Z_n)_{n \geq 1} \KiWsk{} 0\). But then Taylor expansion yields
\begin{align*}
Y_n = \sqrt{n} \biggl( 1 + p \, \frac{Z_n}{\sqrt{n}} + R_3\Bigl( \frac{Z_n}{\sqrt{n}} \Bigr) - 1 \biggr) = p Z_n + \sqrt{n} R_3\Bigl( \frac{Z_n}{\sqrt{n}} \Bigr);
\end{align*}
again the remainder term satisfies \(\lvert R_3(x) \rvert \leq M x^2\), and the remainder lemma and Slutsky's theorem lead to the desired conclusion.

The boundedness of moments in~\eqref{eq:beschr_mom} is subtler to prove. Let \(\alpha \geq 1\), and choose \(\beta\) as in Lemma~\ref{lem:mdp}, but with \(\beta \leq \frac{3}{4}\). We treat the case \(X_n \sim \Unif(\Kug{q_1}{n})\) only; the result for \(\KegM{q_1}{n - 1}\) follows by replacing \(U\) with 1 in what follows.

\textit{Case \(q_1 < \infty\):} Take \((\xi_i)_{i \geq 1} \sim \Gaussp_{q_1}^{\otimes \NZ}\) and \(U \sim \Unif([0, 1])\) independent, and define
\begin{equation*}
x_n := \frac{1}{M_{q_1}^{q_2} \sqrt{n}} \sum_{i = 1}^n \bigl( \lvert \xi_i \rvert^{q_2} - M_{q_1}^{q_2} \bigr) \quad \text{and} \quad y_n := \frac{1}{\sqrt{n}} \sum_{i = 1}^n (\lvert \xi_i \rvert^{q_1} - 1);
\end{equation*}
then
\begin{align*}
\lvert Y_n \rvert^\alpha &\GlVert n^{\alpha/2} \biggl\lvert \frac{n^{p (1/q_1 - 1/q_2)}}{(M_{q_1}^{q_2})^{p/q_2}} \, U^{p/n} \, \frac{\lVert (\xi_i)_{i \leq n} \rVert_{q_2}^p}{\lVert (\xi_i)_{i \leq n} \rVert_{q_1}^p} - 1 \biggr\rvert^\alpha\\
&= n^{\alpha/2} \biggl\lvert U^{p/n} \, \frac{\bigl( 1 + \frac{x_n}{\sqrt{n}} \bigr)^{p/q_2}}{\bigl( 1 + \frac{y_n}{\sqrt{n}} \bigr)^{p/q_1}} - 1 \biggr\rvert^\alpha.
\end{align*}
Define the event \(A_n := [\lvert x_n \rvert \leq n^{\beta - 1/2} \wedge \lvert y_n \rvert \leq n^{\beta - 1/2}]\), then
\begin{equation}\label{eq:aufteilung1}
\Erw[\lvert Y_n \rvert^{\alpha}] = \Erw[\lvert Y_n \rvert^\alpha \chF_{A_n}] + \Erw[\lvert Y_n \rvert^\alpha \chF_{\kompl{A_n}}],
\end{equation}
and we are going to show that either expectation on the right\-/hand side of~\eqref{eq:aufteilung1} is bounded for \(n \in \NZ\). For the first one, write
\begin{equation*}
\frac{(1 + x)^{p/q_2}}{(1 + y)^{p/q_2}} = 1 + R_4(x, y),
\end{equation*}
i.e., \(R_4\) is the zeroth remainder term of Taylor's expansion, which may be bounded as follows,
\begin{equation*}
\lvert R_4(x, y) \rvert \leq c_1 (\lvert x \rvert + \lvert y \rvert) \quad \text{for} \quad \lvert x \rvert, \lvert y \rvert \leq \tfrac{1}{2},
\end{equation*}
where \(c_1 \in (0, \infty)\). Making use of \(\lvert x + y \rvert^\alpha \leq c_2 (\lvert x \rvert^\alpha + \lvert y \rvert^\alpha)\) (to be precise, \(c_2 = \max\{1, 2^{\alpha - 1}\}\)), we get
\begin{align}
\Erw[\lvert Y_n \rvert^\alpha \chF_{A_n}] &= n^{\alpha/2} \Erw\biggl[ \biggl\lvert U^{p/n} \Bigl(1 + R_4\Bigl( \frac{x_n}{\sqrt{n}}, \frac{y_n}{\sqrt{n}} \Bigr) \Bigr) - 1 \biggr\rvert^{\alpha} \chF_{A_n} \biggr]\notag\\
&\leq c_2 n^{\alpha/2} \biggl( \Erw[(1 - U^{p/n})^\alpha] + \Erw[U^{p \alpha/n}] \Erw\Bigl[ \Bigl\lvert R_4\Bigl( \frac{x_n}{\sqrt{n}}, \frac{y_n}{\sqrt{n}} \Bigr) \Bigr\rvert^{\alpha} \chF_{A_n} \Bigr] \biggr)\notag\\
&\leq c_2 n^{\alpha/2} \biggl( \frac{\Gamma(\alpha + 1) \Gamma(\frac{n}{p} + 1)}{\Gamma(\alpha + \frac{n}{p} + 1)} + c_1^\alpha \, c_2 \Erw\biggl[ \frac{\lvert x_n \rvert^\alpha}{n^{\alpha/2}} + \frac{\lvert y_n \rvert^\alpha}{n^{\alpha/2}} \biggr] \biggr),\label{eq:abschaetz1}
\end{align}
where we have used independence of \(U\) and \(\{x_n, y_n\}\), and on \(A_n\) the estimates \(n^{-1/2} \lvert x_n \rvert,\\ n^{-1/2} \lvert y_n \rvert \leq n^{\beta - 1}\) hold true, therefore eventually they are smaller than \(\frac{1}{2}\) since \(\beta < 1\). Now it is well known that
\begin{equation}\label{eq:gamma_asympt}
\lim_{x \to \infty} \frac{\Gamma(x + y)}{x^y \, \Gamma(x)} = 1
\end{equation}
for any fixed \(y > 0\), so the first term within the parentheses in~\eqref{eq:abschaetz1} behaves like \(n^{-\alpha}\). For the second term notice that \((x_n)_{n \geq 1}\) and \((y_n)_{n \geq 1}\) satisfy the central limit theorem, and \(\Erw\bigl[ \bigl\lvert \lvert \xi_1 \rvert^{q_j} - M_{q_1}^{q_j} \bigr\rvert^\alpha \bigr] < \infty\) for \(j \in \{1, 2\}\), hence by \cite[Theorem~2]{Bahr1965} \(\Erw[\lvert x_n \rvert^\alpha]\) and \(\Erw[\lvert y_n \rvert^\alpha]\) converge to the corresponding (finite!) moments of the respective normal distributions. This concludes \(\limsup_{n \to \infty} \Erw[\lvert Y_n \rvert^\alpha \chF_{A_n}] < \infty\).

In order to tackle the second summand in~\eqref{eq:aufteilung1}, first apply H\"older's inequality to get
\begin{equation}\label{eq:aufteilung2}
\begin{split}
\Erw[\lvert Y_n \rvert^\alpha \chF_{\kompl{A_n}}] &\leq n^{\alpha/2} \Erw[\chF_{\kompl{A_n}}]^{1/3} \Erw\biggl[ \Bigl( 1 + \frac{y_n}{\sqrt{n}} \Bigr)^{-3 p \alpha/q_1} \biggr]^{1/3}\\
&\quad \cdot \Erw\biggl[ \Bigl\lvert U^{p/n} \Bigl( 1 + \frac{x_n}{\sqrt{n}} \Bigr)^{p/q_2} - \Bigl( 1 + \frac{y_n}{\sqrt{n}} \Bigr)^{p/q_1} \Bigr\rvert^{3 \alpha} \biggr]^{1/3}.
\end{split}
\end{equation}
With the union bound the first expectation on the right\-/hand side of~\eqref{eq:aufteilung2} is further estimated \(\Erw[\chF_{\kompl{A_n}}] \leq \Wsk[\lvert x_n \rvert \geq n^{\beta - 1/2}] + \Wsk[\lvert y_n \rvert \geq n^{\beta - 1/2}]\). Writing out we have
\begin{align*}
\Wsk[\lvert x_n \rvert \geq n^{\beta - 1/2}] &= \Wsk\biggl[ \frac{1}{n^\beta} \biggl\lvert \sum_{i = 1}^n \bigl( \lvert \xi_i \rvert^{q_2} - M_{q_1}^{q_2} \bigr) \biggr\rvert \geq M_{q_1}^{q_2} \biggr]\\
&= \exp\Bigl( -n^{2 \beta - 1} \, \frac{(M_{q_1}^{q_2})^2}{2 V_{q_1}^{q_2}} \, \bigl( 1 + \smallO(1) \bigr) \Bigr),
\end{align*}
where the asymptotics are argued by Lemma~\ref{lem:mdp}; an analogous result is obtained for \(y_n\), hence \(\lim_{n \to \infty} n^{\alpha/2} \Erw[\chF_{\kompl{A_n}}]^{1/3} = 0\).

The second expectation in~\eqref{eq:aufteilung2} can be computed explicitly, because \(1 + \frac{y_n}{\sqrt{n}} = \frac{1}{n} \sum_{i = 1}^n \lvert \xi_i \rvert^{q_1}\), and the latter follows a certain gamma\-/distribution, which yields
\begin{equation*}
\Erw\biggl[ \Bigl( 1 + \frac{y_n}{\sqrt{n}} \Bigr)^{-3 p \alpha/q_1} \biggr] = \Bigl(\frac{n}{q_1}\Bigr)^{3 p \alpha/q_1} \, \frac{\Gamma(\frac{n - 3 p \alpha}{q_1})}{\Gamma(\frac{n}{q_1})},
\end{equation*}
and that converges to~1 as \(n \to \infty\) by~\eqref{eq:gamma_asympt}. Finally the third expectation in~\eqref{eq:aufteilung2} is bounded from above, up to a constant factor depending only on \(\alpha\), by
\begin{equation*}
\Erw[U^{3 p \alpha/n}] \Erw\biggl[ \Bigl( 1 + \frac{x_n}{\sqrt{n}} \Bigr)^{3 p \alpha/q_2} \biggr] + \Erw\biggl[ \Bigl( 1 + \frac{y_n}{\sqrt{n}} \Bigr)^{3 p \alpha/q_1} \biggr].
\end{equation*}
The \(y_n\)\=/term we have dealt with before (just replace \(-\alpha\) by \(\alpha\)), and \(\Erw[U^{3 p \alpha/n}] \leq 1\). Similarly to \(y_n\) we have \(1 + \frac{x_n}{\sqrt{n}} = \frac{1}{n M_{q_1}^{q_2}} \sum_{i = 1}^n \lvert \xi_i \rvert^{q_2}\), whose law is not known explicitly though; nevertheless all moments of \(\lvert \xi_1 \rvert^{q_2}\) are finite, and \((1 + \frac{x_n}{\sqrt{n}})_{n \geq 1} \to 1\) almost surely by the SLLN, and by \cite[Theorem~10.2]{Gut2005} convergence is valid also in the \(L_p\)\=/sense, which in its turn implies
\begin{equation*}
\lim_{n \to \infty} \Erw\biggl[ \Bigl( 1 + \frac{x_n}{\sqrt{n}} \Bigr)^{3 p \alpha/q_2} \biggr] = 1^{3 p \alpha/q_2} = 1.
\end{equation*}
Taken together this amounts to \(\limsup_{n \to \infty} \Erw[\lvert Y_n \rvert^\alpha \chF_{\kompl{A_n}}] = 0\) and thus, returning to~\eqref{eq:aufteilung1},\linebreak\(\limsup_{n \to \infty} \Erw[\lvert Y_n \rvert^\alpha] < \infty\).

\textit{Case~\(q_1 = \infty\):} We are not going to spell out the details here, since the line of reasoning is analogous to the first case. Take \(U\) and \((\xi_n)_{n \geq 1}\) and define \(x_n\) as before, but set \(y_n := \Eta_n\) as in the proof of the CLT for \(\lVert X_n \rVert_{q_2}\) given above, so the representation reads
\begin{equation*}
\lvert Y_n \rvert^\alpha \GlVert n^{\alpha/2} \biggl\lvert U^{p/n} \, \frac{\bigl( 1 + \frac{x_n}{\sqrt{n}} \bigr)^{p/q_2}}{\bigl( 1 - \frac{y_n}{\sqrt{n}} \bigr)^p} - 1 \biggr\rvert^{\alpha}.
\end{equation*}
The remainder of this case's proof is conducted with the obvious adaptations; in particular notice \(y_n \GlVert \sqrt{n} (1 - U^{1/n})\) which may be used to calculate moments and \(\Wsk[\lvert y_n \rvert \geq n^{\beta - 1/2}]\).

Lastly, the convergence of \((\Erw[\lvert Y_n \rvert^\alpha])_{n \geq 1}\) now is almost immediate, as boundedness of \(\{\Erw[\lvert Y_n \rvert^{\alpha + 1}] \Colon n \in \NZ\}\) implies uniform integrability of \((\lvert Y_n \rvert^\alpha)_{n \geq 1}\), and together with \((\lvert Y_n \rvert^{\alpha})_{n \geq 1} \KiVert{} p^\alpha \sigma^\alpha \lvert N \rvert^\alpha\) this implies the claimed convergence; analogously for integer \(\alpha\) and \(\Erw[Y_n^\alpha]\). Statement~\eqref{eq:erw_theta} follows from the relation
\begin{equation*}
n^{p (1/q_1 - 1/q_2)} \lVert X_n \rVert_{q_2}^p = (M_{q_1}^{q_2})^{p/q_2} \Bigl( 1 + \frac{Y_n}{\sqrt{n}} \Bigr)
\end{equation*}
and the fact \((\Erw[Y_n])_{n \geq 1} \to p \sigma \Erw[N] = 0\).
\end{proof}

\section{Proofs of the Poincar\'e--Maxwell--Borel principles}
\label{sec:bew_pmb}

In this section we present the proofs of the Poincar\'e\--Maxwell\--Borel principles, that is, Theorem~\ref{sa:mpb_nkonst} and Theorem~\ref{sa:mpb_unendl}. We shall start with the probabilistic representation of Schechtman\--Zinn type, which facilitates computations.

\subsection{Proof of the probabilistic representation}
\label{subsec:bew_stochdarst}

In this subsection we present the proof of Proposition~\ref{sa:stoch_darst}, which provides us with a probabilistic representation of the uniform distribution on the unit balls in mixed\-/norm sequence spaces.

Let \(h \colon \RZ^{m \times n} \to [0, \infty)\) be an arbitrary measurable function, then
\begin{align*}
\Erw[h(X)] &= \frac{1}{\KugVol{p, q}{m, n}} \int_{\RZ^{m \times n}} h(x) \chF_{\Kug{p, q}{m, n}}(x) \, \dif x,\\
\intertext{or writing \(x\) in terms of its rows \(x_1, \dotsc, x_m\),}
\Erw[h(X)] &= \frac{1}{\KugVol{p, q}{m, n}} \int_{\RZ^n} \dotsi \int_{\RZ^n} h(x_1, \dotsc, x_m) \chF_{\Kug{p, q}{m, n}}(x_1, \dotsc, x_m) \, \dif x_m \dotsm \dif x_1.\\
\intertext{Introduce polar coordinates for each row separately, that is \(x_i = r_i \theta_i\) with \(r_i \in [0, \infty)\) and \(\theta_i \in \Sph{q}{n - 1}\)\---notice that this corresponds to our decomposition \((X_{i, j})_{j \leq n} = R_i \Theta_i\) introduced in~\eqref{eq:r_theta}\---to get}
\Erw[h(X)] &= \frac{n^m (\KugVol{q}{n})^m}{\KugVol{p, q}{m, n}} \int_{[0, \infty)} r_1^{n - 1} \int_{\Sph{q}{n - 1}} \dotsi \int_{[0, \infty)} r_m^{n - 1} \int_{\Sph{q}{n - 1}} h(r_1 \theta_1, \dotsc, r_m \theta_m)\\
&\hspace{15ex} \cdot \chF_{\Kug{p, q}{m, n}}(r_1 \theta_1, \dotsc, r_m \theta_m) \, \dif\KegM{q}{n - 1}(\theta_m) \, \dif r_m \dotsm \dif\KegM{q}{n - 1}(\theta_1) \, \dif r_1.\\
\intertext{Finally use \((r_i \theta_i)_{i \leq m} \in \Kug{p, q}{m, n}\) iff \((r_i)_{i \leq m} \in \Kug{p}{m}\), plug in \(\KugVol{p, q}{m, n} = 2^{-m} \KugVol{p/n}{m} (\KugVol{q}{n})^m\) (see Equation~\eqref{eq:kugvol}), and gather terms to arrive at}
\Erw[h(X)] &= \int_{[0, \infty)^m} \frac{(2 n)^m}{\KugVol{p/n}{m}} \prod_{i = 1}^m r_i^{n - 1} \chF_{\Kug{p}{m}}(r_1, \dotsc, r_m)\\
&\hspace{10ex} \cdot \int_{(\Sph{q}{n - 1})^m} h(r_1 \theta_1, \dotsc, r_m \theta_m) \, \dif(\KegM{q}{n - 1})^{\otimes m}(\theta_1, \dotsc, \theta_m) \, \dif(r_1, \dotsc, r_m).
\end{align*}
Now we recognize that the last integral proves the claimed density of \((R_i)_{i \leq m}\), the claimed independence and the claimed distribution of the \(\Theta_i\), \(i \in [1, m]\).

It remains to show the representation of \((R_i)_{i \leq m}\). To that end define \((S_i)_{i \leq m} := (R_i^n)_{i \leq m}\), hence \(R_i = S_i^{1/n}\) for each \(i \in [1, m]\); this yields the Jacobian \(n^{-m} \, \prod_{i = 1}^m s_i^{1/n - 1}\) and \((S_i)_{i \leq m}\) has density
\begin{align*}
f_{S_1, \dotsc, S_m}(s_1, \dotsc, s_m) &= \frac{(2 n)^m}{\KugVol{p/n}{m}} \prod_{i = 1}^m (s_i^{1/n})^{n - 1} \chF_{\Kug{p}{m} \cap [0, \infty)^m}(s_1^{1/n}, \dotsc, s_m^{1/n}) n^{-m} \prod_{i = 1}^m s_i^{1/n - 1}\\
&= \frac{2^m}{\KugVol{p/n}{m}} \chF_{\Kug{p/n}{m} \cap [0, \infty)^m}(s_1, \dotsc, s_m).
\end{align*}
Therefore \((S_i)_{i \leq m} \sim \Unif(\Kug{p/n}{m} \cap [0, \infty)^m)\), and by Schechtman and Zinn it can be written
\begin{equation*}
(S_i)_{i \leq m} \GlVert
\begin{cases}
U^{1/m} \bigl( \frac{\lvert \xi_i \rvert}{(\sum_{k = 1}^m \lvert \xi_k \rvert^{p/n})^{n/p}} \bigr)_{i \leq m} & \text{if } p < \infty,\\
(\lvert \xi_i \rvert)_{i \leq m} & \text{if } p = \infty.
\end{cases}
\end{equation*}
Transforming back to \((R_i)_{i \leq m}\) concludes the proof.

\subsection{Proofs of the Poincar\'e\--Maxwell\--Borel principles}%
\label{subsec:bew_pmb}

\begin{Lem}\label{lem:mpb_rtheta}
Let \(p, q \in (0, \infty]\). Then for any fixed \(k, l \in \NZ\) (\(k \leq m\), \(l \leq n\) where necessary):
\begin{compactenum}[(a)]
\item \((m^{1/p} \, R_i)_{i \leq k} \KiVert{m \to \infty} (\lvert \xi_i \rvert^{1/n})_{i \leq k}\) for fixed \(n \in \NZ\),
\item \((m^{1/p} \, R_i)_{i \leq k} \KiWsk{n \to \infty} (1)_{i \leq k}\) for either fixed \(m \in \NZ\) or \(m = m(n) \to \infty\), and
\item \((n^{1/q} \, \Theta_{i, j})_{i \leq k, j \leq l} \KiVert{n \to \infty} (\eta_{i, j})_{i \leq k, j \leq l}\).
\end{compactenum}
\end{Lem}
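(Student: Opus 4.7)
I will prove all three items by appealing to the probabilistic representation of Proposition~\ref{sa:stoch_darst} and then identifying the limits of the independent building blocks $U$, $(\xi_i)_{i \leq m}$ and $(\eta_{i,j})_{i \leq m, j \leq n}$ via the auxiliary lemmata of Subsection~2.3. A useful arithmetical observation to keep in mind is that, directly from the definition in~\eqref{eq:gp_momente}, $M_p^p = 1$ for every $p \in (0, \infty]$; this is what makes all normalizations below converge to the constant $1$.

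For part~(a), in the case $p < \infty$ the representation in Proposition~\ref{sa:stoch_darst}(a) yields
\[
(m^{1/p} R_i)_{i \leq k} \GlVert U^{1/(mn)} \cdot \Bigl( \frac{\lvert \xi_i \rvert^{1/n}}{\bigl( \frac{1}{m} \sum_{j=1}^m \lvert \xi_j \rvert^{p/n} \bigr)^{1/p}} \Bigr)_{i \leq k}
\]
with $(\xi_j)_{j \geq 1}$ i.i.d.\ $\Gaussp_{p/n}$ and $n$ fixed. The factor $U^{1/(mn)}$ tends to~$1$ almost surely as $m \to \infty$, the denominator tends to $(M_{p/n}^{p/n})^{1/p} = 1$ almost surely by the strong law of large numbers, and the vector $(\lvert \xi_i \rvert^{1/n})_{i \leq k}$ is already distributed exactly as the claimed limit. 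Slutsky's theorem (applied in the form that quotients by a deterministic limit are continuous) gives the convergence in distribution. The case $p = \infty$ is immediate because then $R_i \GlVert \lvert \xi_i \rvert^{1/n}$ and $m^{1/p} = 1$ under the conventions of Subsection~\ref{subsec:pmb}.

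For part~(b), I use the same representation but now let $n \to \infty$, keeping track separately of each factor. Lemma~\ref{lem:st_ggz}, applied with the parameter $q$ there replaced by our $p$, shows $\frac{1}{m} \sum_{j=1}^m \lvert \xi_j \rvert^{p/n} \KiWsk{n \to \infty} 1$ in both subregimes (fixed~$m$ and $m = m(n) \to \infty$). The numerator factor $\lvert \xi_i \rvert^{1/n}$ (with $\xi_i \sim \Gaussp_{p/n}$, $i \leq k$ fixed) tends to~$1$ in probability: for $p < \infty$ this follows from item~2(a) of Lemma~\ref{lem:eigenschaften_xi} (which gives a weak limit for $\sqrt{n}(\lvert \xi_i \rvert^{1/n} - 1)$), and for $p = \infty$ from item~2(b) (which gives a weak limit for $n(1 - \lvert \xi_i \rvert^{1/n})$). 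Finally $U^{1/(mn)} \to 1$ almost surely since $mn \to \infty$. Several applications of Slutsky's theorem combine these into $m^{1/p} R_i \KiWsk{n \to \infty} 1$ for every fixed $i \leq k$; since the limit is a deterministic vector, coordinatewise convergence in probability is the same as joint convergence in probability, so the claim follows.

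For part~(c) I invoke Proposition~\ref{sa:stoch_darst}(b) to write $\Theta_{i,j} \GlVert \eta_{i,j}/\lVert (\eta_{i,l})_{l \leq n} \rVert_q$ with independent $q$-Gaussian array $(\eta_{i,j})_{i \leq m, j \leq n}$ (where~$i$ ranges over the current~$m$, but this only matters for the row indices $i \leq k$). Then
\[
n^{1/q} \Theta_{i,j} = \frac{\eta_{i,j}}{n^{-1/q} \lVert (\eta_{i,l})_{l \leq n} \rVert_q}.
\]
Lemma~\ref{lem:stggz_norm} gives $n^{-1/q} \lVert (\eta_{i,l})_{l \leq n} \rVert_q \Kfs{n \to \infty} (M_q^q)^{1/q} = 1$ for each fixed $i \leq k$, covering both $q < \infty$ and $q = \infty$. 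For every $n \geq l$, the truncated block $(\eta_{i,j})_{i \leq k, j \leq l}$ is already distributed exactly as $(\eta_{i,j})_{i \leq k, j \leq l}$ in the limit. A final application of Slutsky's theorem (the $k$ denominators converge jointly in probability to the constant vector $(1, \ldots, 1)$) closes the argument.

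\textbf{Anticipated obstacle.} I do not foresee a serious one; the whole proof is a careful assembly of the stochastic representation with the lemmata already at hand. The only delicate point worth articulating in the write-up is that in part~(c), within a fixed row~$i$, the numerators $\eta_{i,1}, \ldots, \eta_{i,l}$ are not independent of the denominator $\lVert (\eta_{i,l'})_{l' \leq n} \rVert_q$ (the former appear in the latter). This is harmless because the denominator tends to the deterministic value~$1$, which is exactly the setting where Slutsky's theorem applies; across different rows $i$, the $\eta$'s are genuinely independent, so joint convergence in distribution follows by combining rowwise convergence with independence.
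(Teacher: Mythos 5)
Your argument is correct and follows essentially the same route as the paper: invoke the stochastic representation of Proposition~\ref{sa:stoch_darst}, then let the auxiliary lemmata of Subsection~2.3 identify the limits of the independent factors. Two minor differences in presentation: in part~(a) the paper observes that all factors converge almost surely and then passes to convergence in distribution, whereas you invoke Slutsky's theorem explicitly (both are fine, the former is marginally more economical); in part~(b) the paper obtains $\lvert \xi_i \rvert^{1/n} \to 1$ in probability by reusing Lemma~\ref{lem:st_ggz} with $q=1$, $m=1$, while you instead deduce it from the distributional limits in Lemma~\ref{lem:eigenschaften_xi}~2(a)/(b) — equivalent content. One small imprecision in your write-up of~(b): you cite Lemma~\ref{lem:st_ggz} for the convergence of $\frac{1}{m}\sum \lvert\xi_j\rvert^{p/n}$ ``in both subregimes'' without first splitting on $p$, but for $p=\infty$ that lemma requires its exponent parameter to be finite and, more to the point, the sum simply does not appear in the representation (there the representation is just $(\lvert\xi_i\rvert^{1/n})_{i\leq k}$); you should separate the cases $p<\infty$ and $p=\infty$ as the paper does. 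This does not affect correctness, since in the $p=\infty$ case your argument via the numerator factor already suffices.
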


\begin{proof}
\begin{asparaenum}[(a)]
\item%
\textit{Case~\(p < \infty\):} We use Proposition~\ref{sa:stoch_darst}, (a),
\begin{equation}\label{eq:r_darst}
(m^{1/p} \, R_i)_{i \leq k} \GlVert \frac{U^{1/(m n)}}{\bigl( \frac{1}{m} \sum_{i = 1}^m \lvert \xi_i \rvert^{p/n} \bigr)^{1/p}} (\lvert \xi_i \rvert^{1/n})_{i \leq k}.
\end{equation}
By the SLLN, \(\frac{1}{m} \sum_{i = 1}^m \lvert \xi_i \rvert^{p/n} \Kfs{m \to \infty} M_{p/n}^{p/n} = 1\), hence the right-hand-side converges a.s.\ to \((\lvert \xi_i \rvert^{1/n})_{i \leq k}\), and convergence in distribution follows.

\Absatz%
\textit{Case~\(p = \infty\):} Obvious because of \((R_i)_{i \leq k} \GlVert (\lvert \xi_i \rvert^{1/n})_{i \leq k}\).

\item%
\textit{Case~\(p < \infty\):} By Lemma~\ref{lem:st_ggz} we know both \(\bigl( \frac{1}{m} \sum_{i = 1}^m \lvert \xi_i \rvert^{p/n} \bigr)_{n \geq 1} \KiWsk{} 1\) and \((\lvert \xi_i \rvert^{1/n})_{n \geq 1} \KiWsk{} 1\) for each \(i \in [1, k]\) (apply the lemma with \(q = 1\) and \(m = 1\)), hence the right-hand-side of~\eqref{eq:r_darst} converges to \(1\) in probability, therefore \((m^{1/p}\, R_i)_{i \leq k}\) converges in distribution towards a constant and thus also in probability.

\Absatz%
\textit{Case~\(p = \infty\):} Now \((R_i)_{i \leq k} \GlVert (\lvert \xi_i \rvert^{1/n})_{i \leq k} \KiWsk{n \to \infty} (1)_{i \leq k}\) via Lemma~\ref{lem:st_ggz}.

\item%
We have by Proposition~\ref{sa:stoch_darst}, (b),
\begin{equation}\label{eq:theta_darst}
(n^{1/q} \, \Theta_{i, j})_{i \leq k, j \leq l} \GlVert \biggl( \frac{\eta_{i, j}}{n^{-1/q} \lVert (\eta_{i, j'})_{j' \leq n} \rVert_q} \biggr)_{i \leq k, j \leq l}.
\end{equation}
By Lemma~\ref{lem:stggz_norm} \((n^{-1/q} \lVert (\eta_{i, j'})_{j' \leq n} \rVert_q)_{n \geq 1} \Kfs{} 1\), so the right-hand-side of~\eqref{eq:theta_darst} converges to \((\eta_{i, j})_{i \leq k, j \leq l}\) almost surely. \qedhere
\end{asparaenum}
\end{proof}

\begin{Bem}
Statements (a) and (c) of Lemma~\ref{lem:mpb_rtheta} can be seen as consequences of Proposition~\ref{prop:mpb}; this is immediate for (c), and for (a) recall from the proof of Proposition~\ref{sa:stoch_darst} that \((R_i^n)_{i \leq m} \sim \Unif(\Kug{p/n}{m} \cap [0, \infty)^m)\).
\end{Bem}

For a separable metric space \(E\) let \(\Masz_1(E)\) denote the convex set of probability measures on \((E, \Borel(E))\) endowed with the topology of weak convergence of measures; then \(\Masz_1(E)\) is separable too. This topology on \(\Masz_1(E)\) may be metrized by, e.g., the Lévy\--Prokhorov metric \(\dLP\). We denote by \(\LipB(E)\) the space of bounded, Lipschitz-continuous functions on \(E\), equipped with the norm \(\lVert f \rVert_{\Lip} := \max\{\lVert f \rVert_\infty, \lvert f \rvert_{\Lip}\}\) where \(\lvert f \rvert_{\Lip}\) is the Lipschitz-constant of \(f\).

\begin{Lem}\label{lem:kgz_wsk}
Let \((\mu_n)_{n \in \NZ}\) be a sequence of \(\Masz_1(E)\)\=/valued random measures and \(\mu \in \Masz_1(E)\) a deterministic measure. Then
\begin{equation*}
(\mu_n)_{n \geq 1} \KiWsk{} \mu \Longleftrightarrow \forall f \in \LipB(E) \colon \biggl( \int_E f \, \dif\mu_n \biggr)_{n \in \NZ} \KiWsk{} \int_E f \, \dif\mu.
\end{equation*}
\end{Lem}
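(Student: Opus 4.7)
My plan is to treat the two implications separately, exploiting the fact that the limit $\mu$ is deterministic so that convergence in probability in $(\Masz_1(E), \dLP)$ is equivalent to convergence in distribution, and both in turn are equivalent to the following ``subsequence criterion'': every subsequence admits a further subsequence converging almost surely (as elements of $\Masz_1(E)$) to $\mu$.

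For the forward direction $(\Rightarrow)$, I would fix $f \in \LipB(E)$ and observe that the evaluation map $T_f \colon \Masz_1(E) \to \RZ$, $\nu \mapsto \int_E f \, \dif\nu$, is by definition continuous with respect to the weak topology, hence with respect to $\dLP$. Continuity of $T_f$ combined with the continuous-mapping theorem for convergence in probability (applied through the subsequence criterion) yields $(T_f(\mu_n))_{n \geq 1} \KiWsk{} T_f(\mu)$, which is precisely the claim.

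For the reverse direction $(\Leftarrow)$, the central ingredient is the existence of a countable family $\{f_k : k \in \NZ\} \subset \LipB(E)$ that is \emph{convergence-determining} on $\Masz_1(E)$, i.e., such that for deterministic probability measures $\nu_n, \nu$, one has $\nu_n \to \nu$ weakly iff $\int f_k \, \dif\nu_n \to \int f_k \, \dif\nu$ for every $k$. Such a family exists because $E$ is separable; one concrete choice is to fix a countable dense subset $\{x_k\}$ of $E$ and take truncated-distance functions of the form $\min\{1, (\ell \cdot d(x_k, \cdot) - j)_+\}$ for $k, j, \ell \in \NZ$. Given this, I would argue by the subsequence criterion: take an arbitrary subsequence $(n_j)$, then by hypothesis $\int f_k \, \dif\mu_{n_j} \KiWsk{j \to \infty} \int f_k \, \dif\mu$ for each fixed $k$, and a standard diagonal extraction yields a further subsequence $(n_{j_l})$ along which $\int f_k \, \dif\mu_{n_{j_l}} \to \int f_k \, \dif\mu$ almost surely and simultaneously for every $k \in \NZ$. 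On the almost-sure event where this holds for all $k$, the determining property forces $\mu_{n_{j_l}} \to \mu$ in $\dLP$, so $\dLP(\mu_{n_{j_l}}, \mu) \to 0$ almost surely. The subsequence criterion then gives $\dLP(\mu_n, \mu) \KiWsk{} 0$, which is exactly $(\mu_n)_{n \geq 1} \KiWsk{} \mu$.

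The main obstacle is the justification of the countable determining family inside $\LipB(E)$; this is essentially a standard fact from measure theory on separable metric spaces (see, e.g., the Portmanteau theorem combined with an approximation of indicator functions of open balls by Lipschitz functions), but it needs to be invoked carefully because we insist on \emph{Lipschitz} test functions rather than merely continuous bounded ones. Everything else is a routine application of the subsequence criterion for convergence in probability and the continuous-mapping theorem.
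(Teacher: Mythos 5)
Your proof is correct, and the forward direction is essentially the same as the paper's (continuity of the evaluation map $\nu \mapsto \int f\,\dif\nu$ at the deterministic limit $\mu$). For the reverse direction, however, you take a genuinely different route. The paper exploits the structure of the weak topology directly: since the $\dLP$-ball $B_\varepsilon^{\text{LP}}(\mu)$ is open in the weak topology, it contains a finite intersection of subbasic neighbourhoods $\bigcap_{i\le m}\{\nu : \lvert\int f_i\,\dif\nu - \int f_i\,\dif\mu\rvert<\delta\}$ with $f_i\in\LipB(E)$, and then a simple union bound finishes the argument. You instead invoke a \emph{countable} convergence-determining family in $\LipB(E)$, combine it with the subsequence criterion for convergence in probability to a constant, and use a diagonal extraction to obtain an almost-surely convergent further subsequence simultaneously for all test functions. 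Both are valid; the paper's argument is more self-contained (only finitely many test functions are involved at once, no extraction is needed), while yours trades that for a more modular structure at the cost of having to justify the existence of a countable Lipschitz determining family—which, as you correctly note, is a standard but not entirely free fact on separable metric spaces. One small remark: the paper's argument tacitly uses that the weak topology is generated by the maps $\nu\mapsto\int f\,\dif\nu$ with $f\in\LipB(E)$ (not merely $f\in C_b(E)$); your approach needs the analogous fact that such $f$ can be taken to form a countable determining class, so both proofs rest on the same underlying approximation of indicators of open sets by bounded Lipschitz functions.
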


\begin{proof}
\(\Rightarrow\): Let \(f \in \LipB(E)\), then the map \(\nu \mapsto \int_E f \, \dif\nu\) is continuous at \(\mu\) w.r.t.\ \(\dLP\), hence for any \(\varepsilon > 0\) there exists \(\delta > 0\) such that, for any \(\nu \in \Masz_1(E)\),
\begin{equation*}
\dLP(\mu, \nu) < \delta \Longrightarrow \biggl\lvert \int_E f \, \dif\nu - \int_E f \, \dif\mu \biggr\rvert < \varepsilon.
\end{equation*}
Now let \(\varepsilon > 0\), then
\begin{equation*}
\Wsk\biggl[ \biggl\lvert \int_E f \, \dif\mu_n - \int_E f \, \dif\mu \biggr\rvert \geq \varepsilon \biggr] \leq \Wsk[\dLP(\mu_n, \mu) \geq \delta] \Konv{n \to \infty} 0.
\end{equation*}

\(\Leftarrow\): Let \(\varepsilon > 0\). The open ball \(B_\varepsilon^{\text{\upshape LP}}(\mu)\) is open in the weak topology, thus there exist \(m \in \NZ\), \(f_1, \dotsc, f_m \in \LipB(E)\) and \(\delta > 0\) such that
\begin{equation*}
\bigcap_{i = 1}^m \biggl\{ \nu \in \Masz_1(E) \Colon \biggl\lvert \int_E f_i \, \dif\nu - \int_E f_i \, \dif\mu \biggr\rvert < \delta \biggr\} \subset B_\varepsilon^{\text{\upshape LP}}(\mu).
\end{equation*}
The union-bound then implies
\begin{align*}
\Wsk[\dLP(\mu_n, \mu) \geq \varepsilon] &= \Wsk[\mu_n \in \kompl{B_\varepsilon^{\text{\upshape LP}}(\mu)}]\\
&\leq \Wsk\biggl[ \mu_n \in \bigcup_{i = 1}^m \biggl\{ \nu \in \Masz_1(E) \Colon \biggl\lvert \int_E f_i \, \dif\nu - \int_E f_i \, \dif\mu \biggr\rvert \geq \delta \biggr\} \biggr]\\
&\leq \sum_{i = 1}^m \Wsk\biggl[ \biggl\lvert \int_E f_i \, \dif\mu_n - \int_E f_i \dif\mu \biggr\rvert \geq \delta \biggr] \Konv{n \to \infty} 0. \qedhere
\end{align*}
\end{proof}

\begin{proof}[Proof of Theorem~\ref{sa:mpb_nkonst}]
\begin{asparaenum}[(a)]
\item%
We have
\begin{equation*}
(m^{1/p} \, X_{i, j})_{i \leq k, j \leq n} = (m^{1/p} R_i \Theta_i)_{i \leq k}.
\end{equation*}
The claim follows from Lemma~\ref{lem:mpb_rtheta}, (a), together with the independence of \((R_i)_{i \leq m}\) from \(\Theta_1\),\ldots, \(\Theta_m\).

\item%
\textit{Case~\(p < \infty\):} The stochastic representation of \((R_i)_{i \leq m}\) implies
\begin{equation*}
\frac{1}{m} \sum_{i = 1}^m \delta_{m^{1/p} \, R_i} \GlVert \frac{1}{m} \sum_{i = 1}^m \delta_{U^{1/(m n)} \, (\frac{1}{m} \sum_{k = 1}^m \lvert \xi_k \rvert^{p/n})^{-1/p} \lvert \xi_i \rvert^{1/n}}.
\end{equation*}
For the sake of legibility call \(C_m := U^{1/(m n)} \bigl( \frac{1}{m} \sum_{i = 1}^m \lvert \xi_i \rvert^{p/n} \bigr)^{-1/p}\), then \((C_m)_{m \geq 1} \Kfs{} 1\). Now it suffices to prove
\begin{equation*}
\biggl( \frac{1}{m} \sum_{i = 1}^m \delta_{C_m \lvert \xi_i \rvert^{1/n}} \biggr)_{m \geq 1} \KiWsk{} \Vertl(\lvert \xi_1 \rvert^{1/n}),
\end{equation*}
since then \(\bigl( \frac{1}{m} \sum_{i = 1}^m \delta_{m^{1/p} \, R_i} \bigr)_{m \geq 1} \to \Vertl(\lvert \xi_1 \rvert^{1/n})\) in distribution and, because the latter is constant in \(\Masz_1(\RZ)\), also in probability.

\Absatz%
We apply Lemma~\ref{lem:kgz_wsk}. Let \(f \in \LipB(\RZ)\), then
\begin{multline*}
\biggl\lvert \frac{1}{m} \sum_{i = 1}^m f(C_m \lvert \xi_i \rvert^{1/n}) - \Erw[f(\lvert \xi_1 \rvert^{1/n})] \biggr\rvert\\
\begin{aligned}
&\leq \biggl\lvert \frac{1}{m} \sum_{i = 1}^m f(C_m \lvert \xi_i \rvert^{1/n}) - \frac{1}{m} \sum_{i = 1}^m f(\lvert \xi_i \rvert^{1/n}) \biggr\rvert + \biggl\lvert \frac{1}{m} \sum_{i = 1}^m f(\lvert \xi_i \rvert^{1/n}) - \Erw[f(\lvert \xi_1 \rvert^{1/n})] \biggr\rvert\\
&\leq \frac{1}{m} \sum_{i = 1}^m \bigl\lvert f(C_m \lvert \xi_i \rvert^{1/n}) - f(\lvert \xi_i \rvert^{1/n}) \bigr\rvert + \biggl\lvert \frac{1}{m} \sum_{i = 1}^m f(\lvert \xi_i \rvert^{1/n}) - \Erw[f(\lvert \xi_1 \rvert^{1/n})] \biggr\rvert\\
&\leq \lvert f \rvert_{\Lip} \lvert C_m - 1 \rvert \, \frac{1}{m} \sum_{i = 1}^m \lvert \xi_i \rvert^{1/n} + \biggl\lvert \frac{1}{m} \sum_{i = 1}^m f(\lvert \xi_i \rvert^{1/n}) - \Erw[f(\lvert \xi_1 \rvert^{1/n})] \biggr\rvert;
\end{aligned}
\end{multline*}
the last line converges a.s.\ to zero because the sums obey the SLLN, and thus also in probability.

\Absatz%
Essentially the same argument is valid for \(\frac{1}{m} \sum_{i = 1}^m \delta_{m^{1/p} \, X_i}\). Write \(m^{1/p} \, X_i \GlVert C_m \lvert \xi_i \rvert^{1/n} \Theta_i\) with \(C_m\) as before, and let \(f \in \LipB(\RZ^n)\), where the Lipschitz constant is taken with respect to \(\lVert \cdot \rVert_q\), then
\begin{multline*}
\biggl\lvert \frac{1}{m} \sum_{i = 1}^m f(C_m \lvert \xi_i \rvert^{1/n} \Theta_i) - \Erw[f(\lvert \xi_1 \rvert^{1/n} \Theta_1)] \biggr\rvert\\
\begin{aligned}
&\leq \biggl\lvert \frac{1}{m} \sum_{i = 1}^m f(C_m \lvert \xi_i \rvert^{1/n} \Theta_i) - \frac{1}{m} \sum_{i = 1}^m f(\lvert \xi_i \rvert^{1/n} \Theta_i) \biggr\rvert\\
&\quad + \biggl\lvert \frac{1}{m} \sum_{i = 1}^m f(\lvert \xi_i \rvert^{1/n} \Theta_i) - \Erw[f(\lvert \xi_1 \rvert^{1/n} \Theta_1)] \biggr\rvert\\
&\leq \frac{1}{m} \sum_{i = 1}^m \bigl\lvert f(C_m \lvert \xi_i \rvert^{1/n}  \Theta_i) - f(\lvert \xi_i \rvert^{1/n} \Theta_i) \bigr\rvert + \biggl\lvert \frac{1}{m} \sum_{i = 1}^m f(\lvert \xi_i \rvert^{1/n} \Theta_i) - \Erw[f(\lvert \xi_1 \rvert^{1/n} \Theta_1)] \biggr\rvert\\
&\leq \lvert f \rvert_{\Lip} \lvert C_m - 1 \rvert \, \frac{1}{m} \sum_{i = 1}^m \lvert \xi_i \rvert^{1/n} + \biggl\lvert \frac{1}{m} \sum_{i = 1}^m f(\lvert \xi_i \rvert^{1/n} \Theta_i) - \Erw[f(\lvert \xi_1 \rvert^{1/n} \Theta_1)] \biggr\rvert;
\end{aligned}
\end{multline*}
again the sums obey the SLLN and hence the desired convergence is implied.

\Absatz%
\textit{Case~\(p = \infty\):} Notice that by the stochastic representation we are dealing with independent random variables and thus the convergence is immediate. \qedhere
\end{asparaenum}
\end{proof}

\begin{proof}[Proof of Theorem~\ref{sa:mpb_unendl}]
\begin{asparaenum}[(a)]
\item%
Recall
\begin{equation*}
(m^{1/p} \, n^{1/q} \, X_{i, j})_{i \leq m, j \leq n} = (m^{1/p} R_i \cdot n^{1/q} \Theta_{i, j})_{i \leq m, j \leq n}.
\end{equation*}
Lemma~\ref{lem:mpb_rtheta}, (b) and (c), imply the convergence in distribution of \((m^{1/p} \, n^{1/q} \, X_{i, j})_{i \leq k, j \leq l}\) as claimed, where the joint convergence of \((R_i)_{i \leq m}\), \(\Theta_1\),\ldots, \(\Theta_m\) may be argued either by their independence or by Slutsky's theorem.

\item%
\textit{Case~\(p < \infty\):} Write \(C_{m, n} := U^{1/(m n)} \bigl( \frac{1}{m} \sum_{i = 1}^m \lvert \xi_i \rvert^{p/n} \bigr)^{-1/p}\) and \(D_{i, n} := \lvert \xi_i \rvert^{1/n} (n^{-1/q} \lVert (\eta_{i, j})_{j \leq n} \rVert_q)^{-1}\), then
\begin{equation*}
(C_{m, n})_{n \geq 1} \KiWsk{} 1 \quad \text{and} \quad (D_{i, n})_{n \geq 1} \KiWsk{} 1.
\end{equation*}
Now take any \(f \in \LipB(\RZ)\) and consider
\begin{equation*}
\biggl\lvert \frac{1}{m n} \sum_{i = 1}^m \sum_{j = 1}^n f(C_{m, n} D_{i, n} \eta_{i, j}) - \Erw[f(\eta_{1, 1})] \biggr\rvert;
\end{equation*}
we have to show that the probability of this expression being smaller than any positive number converges to one. So let \(\varepsilon > 0\). Define \(B_{m, n, \varepsilon} := \sum_{i = 1}^m \chF_{[\lvert D_{i, n} - 1 \rvert \geq \varepsilon]}\), then \(B_{m, n, \varepsilon}\) is binomially distributed with parameters \(m\) and \(\Wsk[\lvert D_{1, n} - 1 \rvert \geq \varepsilon]\), and there holds \((\frac{1}{m} \, B_{m, n, \varepsilon})_{n \geq 1} \KiWsk{} 0\): indeed, let \(\delta > 0\), then
\begin{align*}
\Wsk\Bigl[ \Bigl\lvert \frac{1}{m} B_{m, n, \varepsilon} \Bigr\rvert \geq \delta \Bigr] &\leq \frac{1}{m^2 \, \delta^2} \Var[B_{m, n, \varepsilon}]\\
&= \frac{1}{m \, \delta^2} \Wsk[\lvert D_{1, n} - 1 \rvert \geq \varepsilon] \Wsk[\lvert D_{1, n} - 1 \lvert < \varepsilon],
\end{align*}
and because of \((D_{1, n})_{n \geq 1} \KiWsk{} 1\) the latter converges to zero as \(n \to \infty\), irrespective of whether \(m\) is fixed or diverges. We also have the laws of large numbers \(\bigl( \frac{1}{m n} \sum_{i = 1}^m \sum_{j = 1}^n f(\eta_{i, j}) \bigr)_{n \geq 1} \KiWsk{} \Erw[f(\eta_{1, 1})]\) and \(\bigl( \frac{1}{m n} \sum_{i = 1}^m \sum_{j = 1}^n \lvert \eta_{i, j} \rvert \bigr)_{n \geq 1} \KiWsk{} M_q^1\). Now there exists an \(n_0 \in \NZ\) such that, for any \(n \geq n_0\), the probability of the event that \(\lvert C_{m, n} - 1 \rvert \leq \varepsilon\) and \(2 \lVert f \rVert_\infty \frac{1}{m} B_{m, n, \varepsilon} \leq \varepsilon\) and \(\bigl\lvert \frac{1}{m n} \sum_{i = 1}^m \sum_{j = 1}^n f(\eta_{i, j}) - \Erw[f(\eta_{1, 1})] \bigr\rvert \leq \varepsilon\) and \(\bigl\lvert \frac{1}{m n} \sum_{i = 1}^m \sum_{j = 1}^n \lvert \eta_{i, j} \rvert - M_q^1 \bigr\rvert \leq \varepsilon\) all hold true is at least, say, \(1 - \varepsilon\). Let \(n \geq n_0\), then on the same event, for any \(i \in [1, m]\) such that \(\lvert D_{i, n} - 1 \rvert < \varepsilon\), we have
\begin{equation*}
\lvert C_{m, n} D_{i, n} - 1 \rvert \leq \lvert C_{m, n} \rvert \lvert D_{i, n} - 1 \rvert + \lvert C_{m, n} - 1 \rvert \leq (1 + \varepsilon) \varepsilon + \varepsilon = \varepsilon^2 + 2 \varepsilon,
\end{equation*}
and therewith
\begin{multline*}
\biggl\lvert \frac{1}{m n} \sum_{i = 1}^m \sum_{j = 1}^n f(C_{m, n} D_{i, n} \eta_{i, j}) - \Erw[f(\eta_{1, 1})] \biggr\rvert\\
\begin{aligned}
&\leq \biggl\lvert \frac{1}{m n} \sum_{i = 1}^m \sum_{j = 1}^n f(C_{m, n} D_{i, n} \eta_{i, j}) - \frac{1}{m n} \sum_{i = 1}^m \sum_{j = 1}^n f(\eta_{i, j}) \biggr\rvert\\
&\quad + \biggl\lvert \frac{1}{m n} \sum_{i = 1}^m \sum_{j = 1}^n f(\eta_{i, j}) - \Erw[f(\eta_{1, 1})] \biggr\rvert\\
&\leq \frac{1}{m n} \sum_{i = 1}^m \sum_{j = 1}^n \bigl\lvert f(C_{m, n} D_{i, n} \eta_{i, j}) - f(\eta_{i, j}) \bigr\rvert + \varepsilon\\
&= \varepsilon + \frac{1}{m n} \sum_{i = 1}^m \sum_{j = 1}^n \chF_{[\lvert D_{i, n} - 1 \rvert < \varepsilon]} \bigl\lvert f(C_{m, n} D_{i, n} \eta_{i, j}) - f(\eta_{i, j}) \bigr\rvert\\
&\quad + \frac{1}{m n} \sum_{i = 1}^m \sum_{j = 1}^n \chF_{[\lvert D_{i, n} - 1 \rvert \geq \varepsilon]} \bigl\lvert f(C_{m, n} D_{i, n} \eta_{i, j}) - f(\eta_{i, j}) \bigr\rvert\\
&\leq \varepsilon + \frac{\lvert f \rvert_{\Lip}}{m n} \sum_{i = 1}^m \sum_{j = 1}^n \chF_{[\lvert D_{i, n} - 1 \rvert < \varepsilon]} \lvert C_{m, n} D_{i, n} - 1 \rvert \lvert \eta_{i, j} \rvert\\
&\quad + \frac{1}{m n} \sum_{i = 1}^m \sum_{j = 1}^n \chF_{[\lvert D_{i, n} - 1 \rvert \geq \varepsilon]} \bigl( \lvert f(C_{m, n} D_{i, n} \eta_{i, j}) \rvert + \lvert f(\eta_{i, j}) \rvert \bigr)\\
&\leq \varepsilon + \frac{\lvert f \rvert_{\Lip}}{m n} (\varepsilon^2 + 2 \varepsilon) \sum_{i = 1}^m \sum_{j = 1}^n \lvert \eta_{i, j} \rvert + \frac{2 \lVert f \rVert_\infty B_{m, n, \varepsilon}}{m}\\
&\leq \varepsilon + \lvert f \rvert_{\Lip} (\varepsilon^2 + 2 \varepsilon) (M_q^1 + \varepsilon) + \varepsilon.
\end{aligned}
\end{multline*}
Because this estimate holds for all \(n \geq n_0\) with probability at least \(1 - \varepsilon\), convergence in probability is established.

\Absatz%
\textit{Case~\(p = \infty\):} Again let \(f \in \LipB(\RZ)\), then with the same notation and techniques as before,
\begin{align*}
\biggl\lvert \frac{1}{m n} \sum_{i = 1}^m \sum_{j = 1}^n f(D_{i, n} \eta_{i, j}) - \Erw[f(\eta_{1, 1})] \biggr\rvert &\leq \biggl\lvert \frac{1}{m n} \sum_{i = 1}^m \sum_{j = 1}^n f(D_{i, n} \eta_{i, j}) - \frac{1}{m n} \sum_{i = 1}^m \sum_{j = 1}^n f(\eta_{i, j}) \biggr\rvert\\
&\quad + \biggl\lvert \frac{1}{m n} \sum_{i = 1}^m \sum_{j = 1}^n f(\eta_{i, j}) - \Erw[f(\eta_{1, 1})] \biggr\rvert\\
&\leq \frac{1}{m n} \sum_{i = 1}^m \sum_{j = 1}^n \bigl\lvert f(D_{i, n} \eta_{i, j}) - f(\eta_{i, j}) \bigr\rvert + o(1).
\end{align*}
The remaining argument is the same as in the case \(p < \infty\), only formally \(C_{m, n} = 1\) throughout. \qedhere
\end{asparaenum}
\end{proof}

\section{Proofs of the weak limit theorems}
\label{sec:bew_gws}

In this section we present the proofs of the weak limit theorems, that is, Theorem~\ref{sa:zgs1}, Theorem~\ref{sa:zgs2}, and Theorem~\ref{sa:zgs3}, as well as of Corollaries~\ref{sa:kritfall1}, \ref{sa:kritfall2}, and~\ref{sa:kritfall3}.

\subsection{Proofs of the weak limit theorems}
\label{subsec:bew_gws}

Recall that Theorem~\ref{sa:zgs1} treats the regime \(m \to \infty\) while \(n\) is fixed.

\begin{proof}[Proof of Theorem~\ref{sa:zgs1}]
\textit{Case~\(p_1 < \infty\):} Appealing to Proposition~\ref{sa:stoch_darst} we have
\begin{equation*}
\lVert X^m \rVert_{p_2, q_2} \GlVert U^{1/(m n)} \, \frac{\bigl( \sum_{i = 1}^m \lvert \xi_i \rvert^{p_2/n} \lVert \Theta_i \rVert_{q_2}^{p_2} \bigr)^{1/p_2}}{\bigl( \sum_{i = 1}^m \lvert \xi_i \rvert^{p_1/n} \bigr)^{1/p_1}}.
\end{equation*}
Define
\begin{align*}
\Xi_m &:= \frac{1}{\sqrt{m}} \sum_{i = 1}^m \bigl( \lvert \xi_i \rvert^{p_1/n} - 1 \bigr)\\
\intertext{and}
\Eta_m &:= \frac{1}{\sqrt{m}} \sum_{i = 1}^m \bigl( \lvert \xi_i \rvert^{p_2/n} \lVert \Theta_i \rVert_{q_2}^{p_2} - M_{p_1/n}^{p_2/n} \Erw[\lVert \Theta_1 \rVert_{q_2}^{p_2}] \bigr),
\end{align*}
then by the multivariate CLT we know
\begin{equation*}
\left( \begin{pmatrix} \Xi_m \\ \Eta_m \end{pmatrix} \right)_{m \geq 1} \KiVert{} \Nvert_2(\mathbf{0}, \Sigma)
\end{equation*}
with covariance\-/matrix
\begin{equation*}
\Sigma := \begin{pmatrix} \frac{p_1}{n} & C_{p_1/n}^{p_1/n, p_2/n} \Erw[\lVert \Theta_1 \rVert_{q_2}^{p_2}] \\ C_{p_1/n}^{p_1/n, p_2/n} \Erw[\lVert \Theta_1 \rVert_{q_2}^{p_2}] & \Var[\lvert \xi_1 \rvert^{p_2/n} \lVert \Theta_1 \rVert_{q_2}^{p_2}] \end{pmatrix}.
\end{equation*}
For brevity's sake we set \(\mu := M_{p_1/n}^{p_2/n} \Erw[\lVert \Theta_1 \rVert_{q_2}^{p_2}]\). Therewith we get
\begin{align*}
\lVert X^m \rVert_{p_2, q_2} &\GlVert U^{1/(m n)} \frac{(m \mu + \sqrt{m} \, \Eta_m)^{1/p_2}}{(m + \sqrt{m} \, \Xi_m)^{1/p_1}}\\
&= \frac{m^{1/p_2} \, \mu^{1/p_2}}{m^{1/p_1}} \, U^{1/(m n)} \frac{\bigl( 1 + \frac{\Eta_m}{\mu \sqrt{m}} \bigr)^{1/p_2}}{\bigl( 1 + \frac{\Xi_m}{\sqrt{m}} \bigr)^{1/p_1}}\\
&= \frac{\mu^{1/p_2}}{m^{1/p_1 - 1/p_2}} \biggl( 1 + \frac{\log(U)}{m n} - \frac{\Xi_m}{p_1 \sqrt{m}} + \frac{\Eta_m}{p_2 \mu \sqrt{m}} + R\Bigl( \frac{\log(U)}{m}, \frac{\Xi_m}{\sqrt{m}}, \frac{\Eta_m}{\sqrt{m}} \Bigr) \biggr),
\end{align*}
where for the third equality we have performed the Taylor\-/expansion
\begin{equation*}
\ez^{u/n} \frac{\bigl( 1 + \frac{y}{\mu} \bigr)^{1/p_2}}{\bigl( 1 + x \bigr)^{1/p_1}} = 1 + \frac{u}{n} - \frac{x}{p_1} + \frac{y}{p_2 \mu} + R(u, x, y),
\end{equation*}
where the remainder satisfies \(\lvert R(u, x, y) \rvert \leq M \lVert (u, x, y) \rVert_2^2\) in a suitable neighbourhood of \((0, 0, 0)\). Rearranging yields
\begin{equation*}
\sqrt{m} \biggl( \frac{m^{1/p_1 - 1/p_2}}{\mu^{1/p_2}} \, \lVert X^m \rVert_{p_2, q_2} - 1 \biggr) \GlVert \frac{\log(U)}{n \sqrt{m}} -\frac{\Xi_m}{p_1} + \frac{\Eta_m}{p_2 \mu} + \sqrt{m} R\Bigl( \frac{\log(U)}{m}, \frac{\Xi_m}{\sqrt{m}}, \frac{\Eta_m}{\sqrt{m}} \Bigr).
\end{equation*}
We have \(m^{1/4} \bigl( \frac{\log(U)}{m}, \frac{\Xi_m}{\sqrt{m}}, \frac{\Eta_m}{\sqrt{m}} \bigr) = (m^{-3/4} \log(U), m^{-1/4} \Xi_m, m^{-1/4} \Eta_m)\), and this converges in probability to \((0, 0, 0)\) as \(m \to \infty\) by appealing to Slutsky's theorem and the known distributional convergence of \((\Xi_m, \Eta_m)\). The remainder lemma then implies \(\bigl( \sqrt{m} \, R\bigl( \frac{\log(U)}{m}, \frac{\Xi_m}{\sqrt{m}}, \frac{\Eta_m}{\sqrt{m}} \bigr) \bigr)_{m \geq 1} \KiWsk{} 0\). Since we also know \((m^{-1/2} \log(U))_{m \geq 1} \Konv{} 0\) almost surely and thus in probability, by Slutsky's theorem the right\-/hand\-/side of the last display converges to the random variable \(\sigma N\), where \(N \sim \Nvert(0, 1)\) and
\begin{equation*}
\sigma^2 = \Bigl( -\frac{1}{p_1}, \frac{1}{p_2 \mu} \Bigr) \Sigma \begin{pmatrix} -\frac{1}{p_1} \\ \frac{1}{p_2 \mu} \end{pmatrix};
\end{equation*}
a simple calculation shows that this is the desired variance.

\textit{Case~\(p_1 = \infty\):} Omit \(U^{1/(m n)}\) and \(\sum_{i = 1}^m \lvert \xi_1 \rvert^{p_1/n}\) from the probabilistic representation and reiterate the argument.
\end{proof}

The regime for Theorem~\ref{sa:zgs2} is \(n \to \infty\) while \(m\) is fixed.

\begin{proof}[Proof of Theorem~\ref{sa:zgs2}]
\begin{asparaenum}[(a)]
\item
\textit{Case~\(p_1 < \infty\):} We define, for \(i \in [1, m]\),
\begin{align*}
\Xi_{n, i} &:= \sqrt{n} (\lvert \xi_i \rvert^{p_1/n} - 1),\\
\Eta_{n, i} &:= \sqrt{n} \biggl( \frac{n^{p_2 (1/q_1 - 1/q_2)}}{(M_{q_1}^{q_2})^{p_2/q_2}} \lVert \Theta_i \rVert_{q_2}^{p_2} - 1 \biggr);
\end{align*}
then, since \(\xi_1, \dotsc, \xi_m, \Theta_1, \dotsc, \Theta_m\) are independent for each \(n \in \NZ\), so are \(\Xi_{n, 1}, \dotsc, \Xi_{n, m}, \Eta_{n, 1}, \dotsc, \Eta_{n, m}\), and by Lemma~\ref{lem:eigenschaften_xi}, 2.(a), \(((\Xi_{n, i})_{i \leq m})_{n \geq 1} \KiVert{} (\sqrt{p_1} N_{1, i})_{i \leq m}\) and by Lemma~\ref{lem:zgs_lp} \(((\Eta_{n, i})_{i \leq m})_{n \geq 1} \KiVert{} (p_2 \sigma N_{2, i})_{i \leq m}\) with \((N_{1, i})_{i \leq m}, (N_{2, i})_{i \leq m} \sim \Nvert(\mathbf{0}, I_m)\) independent. This leads to
\begin{align*}
\lVert X^n \rVert_{p_2, q_2} &\GlVert U^{1/(m n)} \frac{\bigl( \sum_{i = 1}^m \lvert \xi_i \rvert^{p_2/n} \lVert \Theta_i \rVert_{q_2}^{p_2} \bigr)^{1/p_2}}{\bigl( \sum_{i = 1}^m \lvert \xi_i \rvert^{p_1/n} \bigr)^{1/p_1}}\\
&= \frac{(M_{q_1}^{q_2})^{1/q_2}}{m^{1/p_1 - 1/p_2} \, n^{1/q_1 - 1/q_2}} \, U^{1/(m n)} \frac{\bigl( \frac{1}{m} \sum_{i = 1}^m \bigl( 1 + \frac{\Xi_{n, i}}{\sqrt{n}} \bigr)^{p_2/p_1} \bigl( 1 + \frac{\Eta_{n, i}}{\sqrt{n}} \bigr) \bigr)^{1/p_2}}{\bigl( \frac{1}{m} \sum_{i = 1}^m \bigl( 1 + \frac{\Xi_{n, i}}{\sqrt{n}} \bigr) \bigr)^{1/p_1}}\\
&= \frac{(M_{q_1}^{q_2})^{1/q_2}}{m^{1/p_1 - 1/p_2} \, n^{1/q_1 - 1/q_2}} \biggl( 1 + \frac{\log(U)}{m n} + \frac{1}{p_2 m} \sum_{i = 1}^m \frac{\Eta_{n, i}}{\sqrt{n}}\\
&\mspace{220mu} + R\biggl( \frac{\log(U)}{n}, \Bigl( \frac{\Xi_{n, i}}{\sqrt{n}} \Bigr)_{i \leq m}, \Bigl( \frac{\Eta_{n, i}}{\sqrt{n}} \Bigr)_{i \leq m} \biggr) \biggr),
\end{align*}
where we have introduced the Taylor polynomial expansion
\begin{equation*}
\ez^{u/m} \, \frac{\bigl( \frac{1}{m} \sum_{i = 1}^m (1 + x_i)^{p_2/p_1} (1 + y_i) \bigr)^{1/p_2}}{\bigl( \frac{1}{m} \sum_{i = 1}^m (1 + x_i) \bigr)^{1/p_1}} = 1 + \frac{u}{m} + \frac{1}{p_2 m} \sum_{i = 1}^m y_i + R\bigl( u, (x_i)_{i \leq m}, (y_i)_{i \leq m} \bigr)
\end{equation*}
(the partial derivatives of first order w.r.t.\ \(x_1, \dotsc, x_m\) are indeed zero), where again the remainder term satsifies \(\lvert R\bigl( u, (x_i)_{i \leq m}, (y_i)_{i \leq m} \bigr) \rvert \leq M \bigl\lVert \bigl( u, (x_i)_{i \leq m}, (y_i)_{i \leq m} \bigr) \bigr\rVert_2^2\) in a neighbourhood of \(\mathbf{0}\).
Rearranging yields
\begin{align*}
\sqrt{n} \biggl( \frac{m^{1/p_1 - 1/p_2} n^{1/q_1 - 1/q_2}}{(M_{q_1}^{q_2})^{1/q_2}} \lVert X^n \rVert_{p_2, q_2} - 1 \biggr) &\GlVert \frac{\log(U)}{m \sqrt{n}} + \frac{1}{p_2 m} \sum_{i = 1}^m \Eta_{n, i}\\
&\quad + \sqrt{n} R\biggl( \frac{\log(U)}{n}, \Bigl( \frac{\Xi_{n, 1}}{\sqrt{n}} \Bigr)_{i \leq m}, \Bigl( \frac{\Eta_{n, i}}{\sqrt{n}} \Bigr)_{i \leq m} \biggr),
\end{align*}
and we apply the usual argument: \(((\Xi_{n, i})_{i \leq m})_{n \geq 1}\) and \(((\Eta_{n, i})_{i \leq m})_{n \geq 1}\) converge in distribution, hence from Slutsky's theorem we infer \(n^{1/4} \bigl( \frac{\log(U)}{n}, \bigl( \frac{\Xi_{n, 1}}{\sqrt{n}} \bigr)_{i \leq m}, \bigl( \frac{\Eta_{n, i}}{\sqrt{n}} \bigr)_{i \leq m} \bigr) = \bigl( n^{-3/4} \log(U),\\ (n^{-1/4} \Xi_{n, i})_{i \leq m}, (n^{-1/4} \Eta_{n, i})_{i \leq m} \bigr) \Konv{n \to \infty} \mathbf{0}\) in distribution and in probability; the remainder lemma then gives \(\bigl( \sqrt{n} R\bigl( \frac{\log(U)}{n}, \bigl( \frac{\Xi_{n, 1}}{\sqrt{n}} \bigr)_{i \leq m}, \bigl( \frac{\Eta_{n, i}}{\sqrt{n}} \bigr)_{i \leq m} \bigr) \bigr)_{n \geq 1} \KiWsk{} 0\); we also have \(\bigl( \frac{\log(U)}{m \sqrt{n}} \bigr)_{n \geq 1} \to 0\) almost surely and in probability; and a final use of Slutsky's theorem leads to the desired result.

\Absatz%
\textit{Case~\(p_1 = \infty\):} Now according to Lemma~\ref{lem:eigenschaften_xi}, 2.(b),
\begin{equation*}
\Xi_{n, i} := n (1 - \lvert \xi_i \rvert^{p_2/n}) \KiVert{n \to \infty} p_2 E_i,
\end{equation*}
where \((E_i)_{i \leq m} \sim \Exp(1)^{\otimes m}\) is independent of \((N_{2, i})_{i \leq m}\) introduced before; and therewith
\begin{align*}
\lVert X^n \rVert_{p_2, q_2} &\GlVert \frac{(M_{q_1}^{q_2})^{1/q_2}}{m^{-1/p_2} \, n^{1/q_1 - 1/q_2}} \biggl( \frac{1}{m} \sum_{i = 1}^m \Bigl( 1 - \frac{\Xi_{n, i}}{n} \Bigr) \Bigl( 1 + \frac{\Eta_{n, i}}{\sqrt{n}} \Bigr) \biggr)^{1/p_2}\\
&= \frac{(M_{q_1}^{q_2})^{1/q_2}}{m^{-1/p_2} \, n^{1/q_1 - 1/q_2}} \biggl( 1 - \sum_{i = 1}^m \frac{\Xi_{n, i}}{p_2 m n} + \sum_{i = 1}^m \frac{\Eta_{n, i}}{p_2 m \sqrt{n}}\\
&\mspace{190mu}+ R\biggl( \Bigl( \frac{\Xi_{n, 1}}{n} \Bigr)_{i \leq m}, \Bigl( \frac{\Eta_{n, i}}{\sqrt{n}} \Bigr)_{i \leq m} \biggr) \biggr),
\end{align*}
and the rest follows as before, with the modification \(\sqrt{n} \bigl( \frac{\Xi_{n, i}}{n} \bigr)_{i \leq m} = (n^{-1/2} \, \Xi_{n, i})_{i \leq m} \KiWsk{n \to \infty} \mathbf{0}\) and similarly for the remainder term.

\item
Here, as in the following case, \(\lVert \Theta_i \rVert_{q_2} = \lVert \Theta_i \rVert_{q_1} = 1\) and therefore we have
\begin{equation*}
\lVert X^n \rVert_{p_2, q_1} \GlVert U^{1/(m n)} \, \frac{\bigl( \sum_{i = 1}^m \lvert \xi_i \rvert^{p_2/n} \bigr)^{1/p_2}}{\bigl( \sum_{i = 1}^m \lvert \xi_i \rvert^{p_1/n} \bigr)^{1/p_1}}.
\end{equation*}
We perform Taylor expansion of the same function as in (a), case \(p_1 < \infty\), but restricted to \((y_i)_{i \leq m} = \mathbf{0}\) and writing out second\-/order terms, to wit,
\begin{equation*}
\ez^{u/m} \, \frac{\bigl( \frac{1}{m} \sum_{i = 1}^m (1 + x_i)^{p_2/p_1} \bigr)^{1/p_2}}{\bigl( \frac{1}{m} \sum_{i = 1}^m (1 + x_i) \bigr)^{1/p_1}} = 1 + \frac{u}{m} + \frac{u^2}{2 m^2} + \frac{p_2 - p_1}{2 p_1^2 m^2} \, \trapo{x} A x + R(u, x),
\end{equation*}
where \(A = (a_{i, j})_{i, j \leq m} \in \RZ^{m \times m}\) is given by \(a_{i, i} = m - 1\) and \(a_{i, j} = -1\) for all \(i, j \in [1, m]\) with \(i \neq j\), and the remainder term satisfies \(\lvert R(u, x) \rvert \leq M \lVert (u, x) \rVert_2^3\) with some \(M > 0\) for all \(\lVert (u, x) \rVert_2\) sufficiently small. So this gives
\begin{align*}
\lVert X^n \rVert_{p_2, q_1} &\GlVert m^{1/p_2 - 1/p_1} \biggl( 1 + \frac{\log(U)}{m n} + \frac{\log(U)^2}{2 m^2 n^2} + \frac{p_2 - p_1}{2 p_1^2 m^2} \trapo{\Bigl( \frac{\Xi_{n, i}}{\sqrt{n}} \Bigr)_{i \leq m}} A \Bigl( \frac{\Xi_{n, i}}{\sqrt{n}} \Bigr)_{i \leq m}\\
&\mspace{360mu} + R\biggl( \frac{\log(U)}{n}, \Bigl( \frac{\Xi_{n, i}}{\sqrt{n}} \Bigr)_{i \leq m} \biggr) \biggr),
\end{align*}
or equivalently via rearrangement,
\begin{align*}
m n \bigl( 1 - m^{1/p_1 - 1/p_2} \lVert X^n \rVert_{p_2, q_1} \bigr) &\GlVert -\log(U) - \frac{\log(U)^2}{2 m n} + \frac{p_1 - p_2}{2 p_1^2 m} \trapo{(\Xi_{n, i})_{i \leq m}} A (\Xi_{n, i})_{i \leq m}\\
&\mspace{190mu} - m n R\biggl( \frac{\log(U)}{n}, \Bigl( \frac{\Xi_{n, i}}{\sqrt{n}} \Bigr)_{i \leq m} \biggr).
\end{align*}
We choose \((l, \alpha_n, \beta_n) := (3, n^{1/3}, n)\) for the remainder lemma; indeed, \(n^{1/3} \bigl( \frac{\log(U)}{n}, \bigl( \frac{\Xi_{n, i}}{\sqrt{n}} \bigr)_{i \leq m} \bigr) = \bigl( n^{-2/3} \log(U), n^{-1/6} (\Xi_{n, i})_{i \leq m} \bigr)\) converges to \(\mathbf{0}\) in probability as \(n \to \infty\), therefore the remainder lemma implies \(\bigl( n R\bigl( \frac{\log(U)}{n}, \bigl( \frac{\Xi_{n, i}}{\sqrt{n}} \bigr)_{i \leq m} \bigr) \bigr)_{n \geq 1} \KiWsk{} 0\). Additionally we have \((\frac{\log(U)^2}{n})_{n \geq 1} \to 0\) almost surely and hence in probability. Thus via Slutsky's theorem we obtain
\begin{equation*}
\bigl( m n \bigl( 1 - m^{1/p_1 - 1/p_2} \lVert X^n \rVert_{p_2, q_1} \bigr) \bigr)_{n \geq 1} \KiVert{} -\log(U) + \frac{p_1 - p_2}{2 p_1 m} \trapo{(N_{1, i})_{i \leq m}} A (N_{1, i})_{i \leq m},
\end{equation*}
and it remains to argue that the right\-/hand side has the claimed distribution. That \(-\log(U) \sim \Exp(1)\), is common lore. Since \((\xi_i)_{i \leq m}\) is independent from \(U\), \((N_{1, i})_{i \leq m}\) can be assumed independent from \(U\). The matrix \(A\) is symmetric and has  eigenvalues \(m\) with multiplicity \(m - 1\) and \(0\) with multiplicity \(1\), hence its spectral decomposition reads \(A = O \diag(m, \dotsc, m, 0) \trapo{O}\) with orthogonal \(O \in \RZ^{m \times m}\). The standard Gaussian distribution is orthogonally invariant, that is \((N_i)_{i \leq m} := \trapo{O} (N_{1, i})_{i \leq m} \sim \Nvert(\mathbf{0}, I_m)\), and thereby
\begin{equation*}
\trapo{(N_{1, i})_{i \leq m}} A (N_{1, i})_{i \leq m} = \trapo{(N_i)_{i \leq m}} \diag(m, \dotsc, m, 0) (N_i)_{i \leq m} = m \sum_{i = 1}^{m - 1} N_i^2.
\end{equation*}
Because \((N_i)_{i \leq m}\) still is independent from \(U\) we have finished.

\item
Using the same expansion as in (a), case \(p_1 = \infty\), and restricting to \((y_i)_{i \leq m} = \mathbf{0}\) like in (b) while naming \(R'(x) := R(x, \mathbf{0})\), we arrive at
\begin{align*}
\lVert X^n \rVert_{p_2, q_1} &\GlVert \biggl( \sum_{i = 1}^m \lvert \xi_i \rvert^{p_2/n} \biggr)^{1/p_2}\\
&= m^{1/p_2} \biggl( 1 - \sum_{i = 1}^m \frac{\Xi_{n, i}}{p_2 m n} + R'\biggl( \Bigl( \frac{\Xi_{n, i}}{n} \Bigr)_{i \leq m} \biggr) \biggr).
\end{align*}
The result follows, after a rearrangement, from \(\frac{1}{p_1} (\Xi_{n, i})_{i \leq m} \KiVert{} \Exp(1)^{\otimes m}\), managing the remainder term as in (b) above.\qedhere
\end{asparaenum}
\end{proof}

In Theorem~\ref{sa:zgs3} now we consider \(n \to \infty\) and \(m = m(n) \to \infty\). The proof features the Lyapunov CLT: let \(((Z_{n, i})_{i \leq m})_{n \geq 1}\) be an array of \(\RZ\)\=/valued random variables with independent rows (i.e., for any \(n \in \NZ\) the variables \(Z_{n, 1}, \dotsc, Z_{n, m}\) are independent), and call \(s_n := \bigl( \sum_{i = 1}^m \Var[Z_{n, i}] \bigr)^{1/2}\). If \(s_n > 0\) for all \(n \in \NZ\) and \emph{Lyapunov's condition} is statisfied, sc., there exists some \(\delta > 0\) with
\begin{equation}\label{eq:ljapunov_bed}
\lim_{n \to \infty} \frac{1}{s_n^{2 + \delta}} \sum_{i = 1}^m \Erw\bigl[ \lvert Z_{n, i} - \Erw[Z_{n, i}] \rvert^{2 + \delta} \bigr] = 0,
\end{equation}
then
\begin{equation*}
\frac{1}{s_n} \sum_{i = 1}^m \bigl( Z_{n, i} - \Erw[Z_{n, i}] \bigr) \KiVert{n \to \infty} \Nvert(0, 1).
\end{equation*}
As an aside, note that actually Lyapunov's condition implies Lindeberg's condition which in its turn implies the CLT.

\begin{proof}[Proof of Theorem~\ref{sa:zgs3}]
\begin{asparaenum}[(a)]
\item \textit{Case~\(p_1 < \infty\):} Recall the representation
\begin{equation*}
\lVert X^n \rVert_{p_2, q_2} \GlVert U^{1/(m n)} \, \frac{\bigl( \sum_{i = 1}^m \lvert \xi_i \rvert^{p_2/n} \lVert \Theta_i \rVert_{q_2}^{p_2} \bigr)^{1/p_2}}{\bigl( \sum_{i = 1}^m \lvert \xi_i \rvert^{p_1/n} \bigr)^{1/p_1}}.
\end{equation*}
Define the random variables \(\Xi_{n, i}^1\), \(\Xi_{n, i}^2\), and \(\Xi_{n, i}^3\) by
\begin{align*}
\Xi_{n, i}^1 &:= \lvert \xi_i \rvert^{p_1/n} - 1, & \Xi_{n, i}^2 &:= \frac{\lvert \xi_i \rvert^{p_2/n} - M_{p_1/n}^{p_2/n}}{M_{p_1/n}^{p_2/n}},\\
\Xi_{n, i}^3 &:= \frac{\lVert \Theta_i \rVert_{q_2}^{p_2} - \Erw[\lVert \Theta_1 \rVert_{q_2}^{p_2}]}{\Erw[\lVert \Theta_1 \rVert_{q_2}^{p_2}]},
\end{align*}
and their sums
\begin{equation*}
Z_n^k := \sum_{i = 1}^m \Xi_{n, i}^k \quad \text{for \(k \in \{1, 2, 3\}\)\qquad{}and} \qquad Z_n^4 := \sum_{i = 1}^m \Xi_{n, i}^2 \Xi_{n, i}^3,
\end{equation*}
then
\begin{align*}
\lVert X^n \rVert_{p_2, q_2} &\GlVert \frac{(M_{p_1/n}^{p_2/n} \Erw[\lVert \Theta_1 \rVert_{q_2}^{p_2}])^{1/p_2}}{m^{1/p_1 - 1/p_2}} \, U^{1/(m n)} \, \frac{\bigl( \frac{1}{m} \sum_{i = 1}^m (1 + \Xi_{n, i}^2) (1 + \Xi_{n, i}^3) \bigr)^{1/p_2}}{\bigl( \frac{1}{m} \sum_{i = 1}^m (1 + \Xi_{n, i}^1) \bigr)^{1/p_1}}\\
&= \frac{(M_{p_1/n}^{p_2/n} \Erw[\lVert \Theta_1 \rVert_{q_2}^{p_2}])^{1/p_2}}{m^{1/p_1 - 1/p_2}} \, U^{1/(m n)} \frac{\bigl( 1 + \frac{1}{m} Z_n^2 + \frac{1}{m} Z_n^3 + \frac{1}{m} Z_n^4 \bigr)^{1/p_2}}{\bigl( 1 + \frac{1}{m} Z_n^1 \bigr)^{1/p_1}}\\
&= \frac{(M_{p_1/n}^{p_2/n} \Erw[\lVert \Theta_1 \rVert_{q_2}^{p_2}])^{1/p_2}}{m^{1/p_1 - 1/p_2}} \biggl( 1 + \frac{\log(U)}{m n} - \frac{Z_n^1}{p_1 m} + \frac{Z_n^2 + Z_n^3 + Z_n^4}{p_2 m}\\
&\mspace{235mu} + R\biggl( \frac{\log(U)}{m n}, \frac{Z_n^1}{m}, \frac{Z_n^2}{m}, \frac{Z_n^3}{m}, \frac{Z_n^4}{m} \biggr) \biggr),
\end{align*}
where we have introduced the Taylor expansion
\begin{equation*}
\ez^u \, \frac{(1 + z_2 + z_3 + z_4)^{1/p_2}}{(1 + z_1)^{1/p_1}} = 1 + u - \frac{z_1}{p_1} + \frac{z_2 + z_3 + z_4}{p_2} + R(u, z_1, z_2, z_3, z_4),
\end{equation*}
with the remainder term satisfying \(\lvert R(u, z_1, z_2, z_3, z_4) \rvert \leq M \lVert (u, z_1, z_2, z_3, z_4) \rVert_2^2\) in a suitable neighbourhood of \(\mathbf{0}\). Then we can rearrange as follows,
\begin{align*}
\sqrt{m n} \biggl( \frac{m^{1/p_1 - 1/p_2}}{(M_{p_1/n}^{p_2/n} \Erw[\lVert \Theta_1 \rVert_{q_2}^{p_2}])^{1/p_2}} \lVert X^n \rVert_{p_2, q_2} - 1 \biggr) &\GlVert \frac{\log(U)}{\sqrt{m n}} - \frac{\sqrt{n} \, Z_n^1}{p_1 \sqrt{m}} + \frac{\sqrt{n} (Z_n^2 + Z_n^3 + Z_n^4)}{p_2 \sqrt{m}}\\
&\quad + \sqrt{m n} \, R\biggl( \frac{\log(U)}{m n}, \frac{Z_n^1}{m}, \frac{Z_n^2}{m}, \frac{Z_n^3}{m}, \frac{Z_n^4}{m} \biggr),
\end{align*}
and we are going to argue that only \(\sqrt{\frac{n}{m}} \, Z_n^3\) makes a nontrivial contribution to the limit as \(n \to \infty\), in the sense that all other terms converge to zero in probability; Slutsky's theorem yields the result then.

\Absatz%
Clearly \(\bigl( \frac{\log(U)}{\sqrt{m n}} \bigr)_{n \geq 1} \Kfs{} 0\). Next, \(\Xi_{n, i}^1\), \(\Xi_{n, i}^2\), and \(\Xi_{n, i}^3\) are centred, and therefore so is \(Z_n^k\) for \(k \in \{1, 2, 3, 4\}\). The respective variances are as follows, where we use Lemma~\ref{lem:eigenschaften_xi}, 1.(a),
\begin{align*}
\Var\biggl[ \frac{Z_n^1}{m} \biggr] &= \frac{1}{m} \Var\bigl[ \lvert \xi_1 \rvert^{p_1/n} - 1 \bigr] = \frac{1}{m} \, \frac{1}{p_1 n} \Bigl( 1 + \BigO\Bigl( \frac{1}{n} \Bigr) \Bigr) = \BigTheta\Bigl( \frac{1}{m n} \Bigr);\\
\intertext{analogously,}
\Var\biggl[ \frac{Z_n^2}{m} \biggr] &= \frac{\Var\bigl[ \lvert \xi_1 \rvert^{p_2/n} - M_{p_1/n}^{p_2/n} \bigr]}{m (M_{p_1/n}^{p_2/n})^2} = \frac{1}{m} \, \frac{\frac{p_2^2}{p_1 n} \bigl( 1 + \BigO\bigl( \frac{1}{n} \bigr) \bigr)}{\bigl( 1 + \BigO\bigl( \frac{1}{n} \bigr) \bigr)^2} = \BigTheta\Bigl( \frac{1}{m n} \Bigr).
\end{align*}
Defining \(Y_n\) as in Lemma~\ref{lem:zgs_lp} and employing \((\Erw[Y_n^k])_{n \geq 1} \to p_2^k \sigma^k \Erw[N^k]\) as stated there, we get
\begin{align*}
\Erw[\lVert \Theta_1 \rVert_{q_2}^{p_2}] &= \frac{(M_{q_1}^{q_2})^{p_2/q_2}}{n^{p_2 (1/q_1 - 1/q_2)}} \Bigl( 1 + \frac{\Erw[Y_n]}{\sqrt{n}} \Bigr) = \frac{(M_{q_1}^{q_2})^{p_2/q_2}}{n^{p_2 (1/q_1 - 1/q_2)}} \Bigl( 1 + \smallO\Bigl( \frac{1}{\sqrt{n}} \Bigr) \Bigr) \label{eq:erw_norm_theta}\\
\intertext{and}
\Var[\lVert \Theta_1 \rVert_{q_2}^{p_2}] &= \frac{(M_{q_1}^{q_2})^{2 p_2/q_2}}{n^{2 p_2 (1/q_1 - 1/q_2)}} \, \frac{\Var[Y_n]}{n} = \frac{p_2^2 \sigma^2 (M_{q_1}^{q_2})^{2 p_2/q_2}}{n^{2 p_2 (1/q_1 - 1/q_2) + 1}} \bigl( 1 + \smallO(1) \bigr); \notag
\end{align*}
these lead to
\begin{equation*}
\Var\biggl[ \frac{Z_n^3}{m} \biggr] = \frac{1}{m} \, \frac{\Var[\lVert \Theta_1 \rVert_{q_2}^{p_2}]}{\Erw[\lVert \Theta_1 \rVert_{q_2}^{p_2}]^2} = \frac{p_2^2 \sigma^2}{m n} \bigl( 1 + \smallO(1) \bigr),
\end{equation*}
and therewith also to
\begin{equation*}
\Var\biggl[ \frac{Z_n^4}{m} \biggr] = \frac{1}{m} \, \frac{\Var[\lvert \xi_1 \rvert^{p_2/n}] \Var[\lVert \Theta_1 \rVert_{q_2}^{p_2}]}{(M_{p_1/n}^{p_2/n})^2 \Erw[\lVert \Theta_1 \rVert_{q_2}^{p_2}]^2} = \BigTheta\Bigl( \frac{1}{m n^2} \Bigr).
\end{equation*}
We further note that
\begin{align*}
\sum_{i = 1}^m \Erw[\lvert \Xi_{n, i}^3 \rvert^3] &= \frac{m}{\Erw[\lVert \Theta_1 \rVert_{q_2}^{p_2}]^3} \Erw\biggl[ \frac{(M_{q_1}^{q_2})^{3 p_2/q_2}}{n^{3 p_2 (1/q_1 - 1/q_2)}} \, \biggl\lvert \frac{Y_n - \Erw[Y_n]}{\sqrt{n}} \biggr\rvert^3 \biggr]\\
&= \frac{m}{n^{3/2}} p_2^3 \sigma^3 \Erw[\lvert N \rvert^3] \bigl( 1 + \smallO(1) \bigr).
\end{align*}
But then the array \(((\Xi_{n, i}^3)_{i \leq m})_{n \geq 1}\) satisfies Lyapunov's condition~\eqref{eq:ljapunov_bed} with \(\delta = 1\) since
\begin{equation*}
\frac{\sum_{i = 1}^m \Erw[\lvert \Xi_{n, i}^3 \rvert^3]}{\Var[Z_n^3]^{3/2}} = \frac{\BigTheta(m n^{-3/2})}{\BigTheta(m^{3/2} n^{-3/2})} = \BigTheta\Bigl( \frac{1}{\sqrt{m}} \Bigr),
\end{equation*}
and hence we get the CLT \((\Var[Z_n^3]^{-1/2} Z_n^3)_{n \geq 1} \KiVert{} N\), or rather
\begin{equation*}
\biggl( \sqrt{\frac{n}{m}} \, Z_n^3 \biggr)_{n \geq 1} \KiVert{} p_2 \sigma N.
\end{equation*}
On the other hand there is still \(\Var\bigl[ \sqrt{\frac{n}{m}} \, Z_n^4 \bigr] = \BigTheta(\frac{1}{n})\), and a little bit less obviously (employ Lemma~\ref{lem:eigenschaften_xi} again; alternatively, Lemma~\ref{lem:xi_differenz} with weaker asymptotics),
\begin{equation*}
\Var\biggl[ \sqrt{\frac{n}{m}} \Bigl( \frac{Z_n^2}{p_2} - \frac{Z_n^1}{p_1} \Bigr) \biggr] = n \biggl( \frac{V_{p_1/n}^{p_2/n}}{(p_2 M_{p_1/n}^{p_2/n})^2} - \frac{2 C_{p_1/n}^{p_1/n, p_2/n}}{p_1 p_2 M_{p_1/n}^{p_2/n}} + \frac{V_{p_1/n}^{p_1/n}}{p_1^2} \biggr) = \frac{(p_2 - p_1)^2}{2 p_1^2 \, n} \Bigl( 1 + \BigO\Bigl( \frac{1}{n} \Bigr) \Bigr)
\end{equation*}
wherefore by \v{C}eby\v{s}\"{e}v's inequality both \(\sqrt{\frac{n}{m}} \bigl( \frac{Z_n^2}{p_2} - \frac{Z_n^1}{p_1} \bigr)\) and \(\sqrt{\frac{n}{m}} \, Z_n^4\) converge to zero in probability. For the remainder term we consider
\begin{equation*}
(m n)^{1/4} \biggl( \frac{\log(U)}{m n}, \frac{Z_n^1}{m}, \frac{Z_n^2}{m}, \frac{Z_n^3}{m}, \frac{Z_n^4}{m} \biggr) = \biggl( \frac{\log(U)}{(m n)^{3/4}}, \frac{n^{1/4} \, Z_n^1}{m^{3/4}}, \frac{n^{1/4} \, Z_n^2}{m^{3/4}}, \frac{n^{1/4} \, Z_n^3}{m^{3/4}}, \frac{n^{1/4} \, Z_n^4}{m^{3/4}} \biggr).
\end{equation*}
Again \(((m n)^{-3/4} \log(U))_{n \geq 1} \Kfs{} 0\) is obvious. For \(k \in \{1, 2, 3\}\) we have \(\Var[m^{-3/4} n^{1/4} Z_n^k] = \BigTheta((m n)^{-1/2})\), and also \(\Var[m^{-3/4} n^{1/4} Z_n^4] = \BigTheta(m^{-1/2} n^{-3/2})\); hence the respective components converge to zero in probability via \v{C}eby\v{s}\"ev's inequality. This establishes that the conditions of the remainder lemma are met, and finally the remainder term converges to zero in probability.

\Absatz%
\textit{Case~\(p_1 = \infty\):} There is no need to iterate the argument in its entirety; for one, omit \(U\) and \(\Xi_n^1\) from the previous case, and for another, we have different asymptotics for the variances; to wit, referring to Lemma~\ref{lem:eigenschaften_xi}, 1.(b),
\begin{equation*}
\Var\biggl[ \frac{Z_n^2}{m} \biggr] = \frac{1}{m} \, \frac{V_\infty^{p_2/n}}{(M_\infty^{p_2/n})^2} = \frac{\frac{p_2^2}{n^2} \bigl( 1 + \BigO\bigl( \frac{1}{n} \bigr) \bigr)}{m \bigl( 1 + \BigO\bigl( \frac{1}{n} \Bigr) \bigr)^2} = \frac{p_2^2}{m n^2} \Bigl( 1 + \BigO\Bigl( \frac{1}{n} \Bigr) \Bigr),
\end{equation*}
and correspondingly,
\begin{equation*}
\Var\biggl[ \frac{Z_n^4}{m} \biggr] = \frac{p_2^4 \sigma^2}{m n^3} \bigl( 1 + \smallO(1) \bigr).
\end{equation*}
These imply \(\Var\bigl[ \sqrt{\frac{n}{m}} \, Z_n^2 \bigr] = \BigTheta(\frac{1}{n})\) and \(\Var\bigl[ \sqrt{\frac{n}{m}} \, Z_n^4 \bigr] = \BigTheta(\frac{1}{n^2})\), hence \(\bigl( \sqrt{\frac{n}{m}} \, Z_n^2 \bigr)_{n \in \NZ} \KiWsk{} 0\) and \(\bigl( \sqrt{\frac{n}{m}} \, Z_n^4 \bigr)_{n \in \NZ} \KiWsk{} 0\) via \v{C}eby\v{s}\"ev's inequality. For the remainder term we consider
\begin{equation*}
(m n)^{1/4} \biggl( \frac{Z_n^2}{m}, \frac{Z_n^3}{m}, \frac{Z_n^4}{m} \biggr) = \biggl( \frac{n^{1/4} \, Z_n^2}{m^{3/4}}, \frac{n^{1/4} \, Z_n^3}{m^{3/4}}, \frac{n^{1/4} \, Z_n^4}{m^{3/4}} \biggr).
\end{equation*}
We have \(\Var[m^{-3/4} n^{1/4} Z_n^2] = \BigTheta(m^{-1/2} n^{-3/2})\) and also \(\Var[m^{-3/4} n^{1/4} Z_n^4] = \BigTheta(m^{-1/2} n^{-5/2})\), and \(\Var[m^{-3/4} n^{1/4} Z_n^3] = \BigTheta((m n)^{-1/2})\) remains unchanged; hence the respective components converge to zero in probability via \v{C}eby\v{s}\"ev's inequality. The remainder lemma does the rest.

\item%
Define \(\Xi_{n, i}^1\), \(\Xi_{n, i}^2\), \(Z_n^1\), and \(Z_n^2\) as in (a), then we have the probabilistic representation
\begin{align*}
\lVert X^n \rVert_{p_2, q_1} &\GlVert U^{1/(m n)} \, \frac{\bigl( \sum_{i = 1}^m \lvert \xi_i \rvert^{p_2/n} \bigr)^{1/p_2}}{\bigl( \sum_{i = 1}^m \lvert \xi_i \rvert^{p_1/n} \bigr)^{1/p_1}}\\
&= U^{1/(m n)} \, \frac{\bigl( m M_{p_1/n}^{p_2/n} + M_{p_1/n}^{p_2/n} Z_n^2 \bigr)^{1/p_2}}{(m + Z_n^1)^{1/p_1}}\\
&= \frac{(M_{p_1/n}^{p_2/n})^{1/p_2}}{m^{1/p_1 - 1/p_2}} \, U^{1/(m n)} \, \frac{\bigl( 1 + \frac{Z_n^2}{m} \bigr)^{1/p_2}}{\bigl( 1 + \frac{Z_n^1}{m} \bigr)^{1/p_1}}\\
&= \frac{(M_{p_1/n}^{p_2/n})^{1/p_2}}{m^{1/p_1 - 1/p_2}} \biggl( 1 + \frac{\log(U)}{m n} - \frac{Z_n^1}{p_1 m} + \frac{Z_n^2}{p_2 m} + R\Bigl( \frac{\log(U)}{m n}, \frac{Z_n^1}{m}, \frac{Z_n^2}{m} \Bigr) \biggr),
\end{align*}
where we have used the same Taylor expansion as in (a), but evaluated at \(z_3 = z_4 = 0\), and the remainder term satsifies \(\lvert R(u, z_1, z_2) \rvert \leq M \lVert (u, z_1, z_2) \rVert_2^2\) with some \(M > 0\) in a neighbourhood of \((0, 0, 0)\). Rearranging yields
\begin{equation*}
\sqrt{m} \, n \biggl( \frac{m^{1/p_1 - 1/p_2}}{(M_{p_1/n}^{p_2/n})^{1/p_2}} \, \lVert X^n \rVert_{p_2, q_1} - 1 \biggr) \GlVert \frac{\log(U)}{\sqrt{m}} + \frac{n}{\sqrt{m}} \Bigl( \frac{Z_n^2}{p_2} - \frac{Z_n^1}{p_1} \Bigr) + \sqrt{m} \, n R\Bigl( \frac{\log(U)}{m n}, \frac{Z_n^1}{m}, \frac{Z_n^2}{m} \Bigr).
\end{equation*}
We know \((m^{-1/2} \log(U))_{n \geq 1} \Kfs{} 0\); in order to apply the remainder lemma we have to ensure
\begin{equation*}
m^{1/4} n^{1/2} \Bigl( \frac{\log(U)}{m n}, \frac{Z_n^1}{m}, \frac{Z_n^2}{m} \Bigr) \KiWsk{n \to \infty} (0, 0, 0),
\end{equation*}
but we have \(m^{1/4} n^{1/2} \frac{\log(U)}{m n} = m^{-3/4} n^{-1/2} \log(U) \Kfs{n \to \infty} 0\), and with reference to the proof of (a), \(\Var\bigl[ m^{1/4} n^{1/2} \frac{Z_n^k}{m} \bigr] = m^{1/2} n \BigTheta(\frac{1}{m n}) = \BigTheta(m^{-1/2}) \Konv{n \to \infty} 0\) for \(k \in \{1, 2\}\), hence via \v{C}eby\v{s}\"{e}v's inequality \(m^{1/4} n^{1/2} \bigl( \frac{Z_n^1}{m}, \frac{Z_n^2}{m} \bigr) \KiWsk{n \to \infty} (0, 0)\).

\Absatz%
It remains to show that \(\frac{n}{\sqrt{m}} \bigl( \frac{Z_n^2}{p_2} - \frac{Z_n^1}{p_1} \bigr)\) converges to the claimed distribution. Written out, the term in parentheses reads
\begin{equation*}
\frac{Z_n^2}{p_2} - \frac{Z_n^1}{p_1} = \sum_{i = 1}^m \biggl( \frac{\lvert \xi_i \rvert^{p_2/n} - M_{p_1/n}^{p_2/n}}{p_2 M_{p_1/n}^{p_2/n}} - \frac{\lvert \xi_i \rvert^{p_1/n} - 1}{p_1} \biggr);
\end{equation*}
call \(Z_{n, i} := \frac{\lvert \xi_i \rvert^{p_2/n} - M_{p_1/n}^{p_2/n}}{p_2 M_{p_1/n}^{p_2/n}} - \frac{\lvert \xi_i \rvert^{p_1/n} - 1}{p_1}\), then \(\Erw[Z_{n, i}] = 0\), and we are going to demonstate that the array \(((Z_{n, i})_{i \leq m})_{n \geq 1}\) satisfies Lyapunov's condition. Let \(\delta > 0\), then from Lemma~\ref{lem:xi_differenz} we get, for any \(i \in [1, m]\),
\begin{align*}
\Erw\bigl[ \lvert Z_{n, i} \rvert^{2 + \delta} \bigr] = n^{-2 - \delta} \Erw\bigl[ \lvert n Z_{n, i} \rvert^{2 + \delta} \bigr] = n^{-2 - \delta} \Bigl\lvert \frac{p_2 - p_1}{2 p_1} \Bigr\rvert^{2 + \delta} \Erw\bigl[ \lvert N^2 - 1 \rvert^{2 + \delta} \bigr] (1 + \smallO(1)),
\end{align*}
and as we have established already in the proof of (a),
\begin{equation*}
s_n^2 := \Var\biggl[ \sum_{i = 1}^m Z_{n, i} \biggr] = \frac{m (p_2 - p_1)^2}{2 p_1^2 n^2} \Bigl( 1 + \BigO\Bigl( \frac{1}{n} \Bigr) \Bigr).
\end{equation*}
These now show
\begin{align*}
\frac{1}{s_n^{2 + \delta}} \sum_{i = 1}^m \Erw\bigl[ \lvert Z_{n, i} \rvert^{2 + \delta} \bigr] &= \frac{m n^{-2 - \delta} \Bigl\lvert \frac{p_2 - p_1}{2 p_1} \bigr\rvert^{2 + \delta} \Erw\bigl[ \lvert N^2 - 1 \rvert^{2 + \delta} \bigr] (1 + \smallO(1))}{m^{1 + \delta/2} n^{-2 - \delta} \bigl\lvert \frac{p_2 - p_1}{\sqrt{2} \, p_1} \bigr\rvert^{2 + \delta} \bigl( 1 + \BigO\bigl( \frac{1}{n} \bigr) \bigr)}\\
&= \frac{1}{m^{\delta/2}} \, \frac{\Erw\bigl[ \lvert N^2 - 1 \rvert^{2 + \delta} \bigr]}{2^{1 + \delta/2}} \, (1 + \smallO(1)) \Konv{n \to \infty} 0,
\end{align*}
that is, Lyapunov's condition is satisfied, and hence we have the CLT
\begin{equation*}
\frac{1}{s_n} \sum_{i = 1}^m Z_{n, i} = \frac{\sqrt{2} \, p_1}{\lvert p_2 - p_1 \rvert} \, \frac{n}{\sqrt{m}} \Bigl( 1 + \BigO\Bigl( \frac{1}{n} \Bigr) \Bigr) \Bigl( \frac{Z_n^2}{p_2} - \frac{Z_n^1}{p_1} \Bigr) \KiVert{n \to \infty} N \sim \Nvert(0, 1),
\end{equation*}
equivalently,
\begin{equation*}
\frac{n}{\sqrt{m}} \Bigl( \frac{Z_n^2}{p_2} - \frac{Z_n^1}{p_1} \Bigr) \KiVert{n \to \infty} \frac{\lvert p_2 - p_1 \rvert}{\sqrt{2} \, p_1} \, N,
\end{equation*}
which concludes this part's proof.

\item%
Keeping \(Z_n^2\) as before, we have
\begin{align*}
\lVert X^n \rVert_{p_2, q_1} &\GlVert \biggl( \sum_{i = 1}^m \lvert \xi_i \rvert^{p_2/n} \biggr)^{1/p_2}\\
&= m^{1/p_2} (M_\infty^{p_2/n})^{1/p_2} \biggl( 1 + \frac{Z_n^2}{m} \biggr)^{1/p_2}\\
&= m^{1/p_2} (M_\infty^{p_2/n})^{1/p_2} \biggl( 1 + \frac{Z_n^2}{p_2 m} + R\biggl( \frac{Z_n^2}{m} \biggr) \biggr),
\end{align*}
equivalently
\begin{equation*}
\sqrt{m} \, n \biggl( \frac{\lVert X^n \rVert_{p_2, q_1}}{m^{1/p_2} (M_\infty^{p_2/n})^{1/p_2}} - 1 \biggr) \GlVert \frac{n}{p_2 \sqrt{m}} \, Z_n^2 + \sqrt{m} \, n R\biggl( \frac{Z_n^2}{m} \biggr),
\end{equation*}
where the remainder fulfils \(\lvert R(x) \rvert \leq M x^2\) in some neighbourhood of zero. Like in the proof of (a), case \(p_1 = \infty\), we use Lemma~\ref{lem:eigenschaften_xi}, 1.(b), to see
\begin{equation*}
\Var\biggl[ m^{1/4} n^{1/2} \, \frac{Z_n^2}{m} \biggr] = \sqrt{m} \, n \, \frac{p_2^2}{m n^2} \Bigl( 1 + \BigO\Bigl( \frac{1}{n} \Bigr) \Bigr) = \frac{p_2^2}{\sqrt{m} \, n} \Bigl( 1 + \BigO\Bigl( \frac{1}{n} \Bigr) \Bigr),
\end{equation*}
and via \v{C}eby\v{s}\"ev's inequality this immediately implies \(\bigl( m^{1/4} n^{1/2} \frac{Z_n^2}{m} \bigr)_{n \geq 1} \KiWsk{} 0\), and thus by the remainder lemma the remainder term converges to zero.

\Absatz%
Since \(Z_n^2 = \sum_{i = 1}^m \frac{\lvert \xi_i \rvert^{p_2/n} - M_\infty^{p_2/n}}{M_\infty^{p_2/n}}\), we are going to prove that the triangular array \(\bigl( \bigl( \frac{\lvert \xi_i \rvert^{p_2/n} - M_\infty^{p_2/n}}{M_\infty^{p_2/n}} \bigr)_{i \leq m} \bigr)_{n \geq 1}\) satisfies Lyapunov's condition with any \(\delta \in (0, \infty)\) (the components are centred already); so let \(\delta \in (0, \infty)\). Employing Lemma~\ref{lem:eigenschaften_xi}, 1.(b), again we have, for any \(i \in [1, m]\),
\begin{equation*}
\Erw\biggl[ \biggl\lvert \frac{\lvert \xi_i \rvert^{p_2/n} - M_\infty^{p_2/n}}{M_\infty^{p_2/n}} \biggr\rvert^{2 + \delta} \biggr] = \frac{\bigl( \frac{p_2}{n} \bigr)^{2 + \delta} \Erw[\lvert E - 1 \rvert^{2 + \delta}] (1 + \smallO(1))}{(1 + \smallO(1))^{2 + \delta}} = \frac{p_2^{2 + \delta} \Erw[\lvert E - 1 \rvert^{2 + \delta}]}{n^{2 + \delta}} (1 + \smallO(1)),
\end{equation*}
so together with the by now well\-/known \(\Var[Z_n^2] = \frac{m p_2^2}{n^2} \bigl( 1 + \BigO(\frac{1}{n}) \bigr)\) we get
\begin{align*}
\frac{\sum_{i = 1}^m \Erw\bigl[ \bigl\lvert \frac{\lvert \xi_i \rvert^{p_2/n} - M_\infty^{p_2/n}}{M_\infty^{p_2/n}} \bigr\rvert^{2 + \delta} \bigr]}{\Var[Z_n^2]^{1 + \delta/2}} &= \frac{m n^{-2 - \delta} p_2^{2 + \delta} \Erw[\lvert E - 1 \rvert^{2 + \delta}] (1 + \smallO(1))}{m^{1 + \delta/2} n^{-2 - \delta} p_2^{2 + \delta} \bigl( 1 + \BigO(\frac{1}{n}) \bigr)}\\
&= \frac{\Erw[\lvert E - 1 \rvert^{2 + \delta}]}{m^{\delta/2}} \, (1 + \smallO(1)) \Konv{n \to \infty} 0.
\end{align*}
So this shows \(\bigl( \Var[Z_n^2]^{-1/2} \, Z_n^1 \bigr)_{n \geq 1} = \bigl( \frac{n}{p_2 \sqrt{m}} \bigl( 1 + \BigO(\frac{1}{n}) \bigr) Z_n^2 \bigr)_{n \geq 1} \KiVert{} N \sim \Nvert(0, 1)\), and finally
\begin{equation*}
\biggl( \frac{n}{p_2 \sqrt{m}} \, Z_n^2 \biggr)_{n \geq 1} \KiVert{} N,
\end{equation*}
and through an application of Slutsky's theorem the proof is finished. \qedhere
\end{asparaenum}
\end{proof}

\subsection{Proofs of the corollaries}
\label{subsec:bew_volumen}

The key observation is the following: let \(X \sim \Unif(\Kug{p_1, q_1}{m, n})\), then \(\inv{(r_{p_1, q_1}^{m, n})} \, X \sim \Unif(\inv{(r_{p_1, q_1}^{m, n})} \, \Kug{p_1, q_1}{m, n})\), and therewith, for any \(t \in [0, \infty)\),
\begin{align}
V^{m, n}(t) &= \vol_{m, n}\bigl( \inv{(r_{p_1, q_1}^{m, n})} \, \Kug{p_1, q_1}{m, n} \cap t \inv{(r_{p_2, q_2}^{m, n})} \, \Kug{p_2, q_2}{m, n} \bigr)\notag\\
&= \Wsk\bigl[ \inv{(r_{p_1, q_1}^{m, n})} \, X \in t \inv{(r_{p_2, q_2}^{m, n})} \, \Kug{p_2, q_2}{m, n} \bigr] = \Wsk\biggl[ \lVert X \rVert_{p_2, q_2} \leq \frac{r_{p_1, q_1}^{m, n}}{r_{p_2, q_2}^{m, n}} \, t \biggr]. \label{eq:vmnt}
\end{align}

\begin{proof}[Proof of Corollary~\ref{sa:kritfall1}]
We use Theorem~\ref{sa:zgs1}, thus, continuing from Equation~\eqref{eq:vmnt},
\begin{align*}
V^{m, n}(t) &= \Wsk\biggl[ \sqrt{m} \biggl( \frac{m^{1/p_1 - 1/p_2}}{\bigl( M_{p_1/n}^{p_2/n} \Erw[\lVert \Theta_1 \rVert_{q_2}^{p_2}] \bigl)^{1/p_2}} \lVert X^m \rVert_{p_2, q_2} - 1 \biggr)\\
&\qquad\quad\leq \sqrt{m} \biggl( \frac{m^{1/p_1 - 1/p_2}}{\bigl( M_{p_1/n}^{p_2/n} \Erw[\lVert \Theta_1 \rVert_{q_2}^{p_2}] \bigl)^{1/p_2}} \, \frac{r_{p_1, q_1}^{m, n}}{r_{p_2, q_2}^{m, n}} \, t - 1 \biggr) \biggr].
\end{align*}
The last random variable converges in distribution to a centred nondegenerate normally distributed random variable \(N\), hence we must determine the limit of the right-hand-side. Recall the definition of the radii \(r_{p_i, q_i}^{m, n}\) at the beginning of Section~\ref{subsec:ss}; expanding the gamma\-/functions in the volumes \(\vol_{m n}(\Kug{p_i, q_i}{m, n})\) (which see Equation~\eqref{eq:kugvol}) using Stirling's formula we arrive at
\begin{equation}\label{eq:asymptotik_Ap1q1p2q2n}
\frac{m^{1/p_1 - 1/p_2}}{\bigl( M_{p_1/n}^{p_2/n} \Erw[\lVert \Theta_1 \rVert_{q_2}^{p_2}] \bigr)^{1/p_2}} \, \frac{r_{p_1, q_1}^{m, n}}{r_{p_2, q_2}^{m, n}} = \begin{cases} A_{p_1, q_1; p_2, q_2; n} \bigl( 1 + \BigO(\frac{1}{m}) \bigr) & \text{if } p_1 < \infty, \\ A_{p_1, q_1; p_2, q_2; n} \bigl( 1 + \BigO(\frac{\log(m)}{m}) \bigr) & \text{if } p_1 = \infty. \end{cases}
\end{equation}
This implies
\begin{equation*}
\lim_{m \to \infty} \frac{m^{1/p_1 - 1/p_2}}{\bigl( M_{p_1/n}^{p_2/n} \Erw[\lVert \Theta_1 \rVert_{q_2}^{p_2}] \bigr)^{1/p_2}} \, \frac{r_{p_1, q_1}^{m, n}}{r_{p_2, q_2}^{m, n}} = A_{p_1, q_1; p_2, q_2; n},
\end{equation*}
and therewith the cases \(t A_{p_1, q_1; p_2, q_2; n} < 1\) or \(t A_{p_1, q_1; p_2, q_2; n} > 1\) are immediately accounted for. In the threshold case \(t A_{p_1, q_1; p_2, q_2; n} = 1\) we need the information in Equation~\eqref{eq:asymptotik_Ap1q1p2q2n} that the correction\-/term is of order \(\smallO(m^{-1/2})\) in either case; this yields
\begin{align*}
\sqrt{m} \biggl( \frac{m^{1/p_1 - 1/p_2}}{\bigl( M_{p_1/n}^{p_2/n} \Erw[\lVert \Theta_1 \rVert_{q_2}^{p_2}] \bigr)^{1/p_2}} \, \frac{r_{p_1, q_1}^{m, n}}{r_{p_2, q_2}^{m, n}} \, t - 1 \biggr) &= \sqrt{m} \Bigl( t A_{p_1, q_1; p_2, q_2; n} \Bigl( 1 + \smallO\Bigl( \frac{1}{\sqrt{m}} \Bigr) \Bigr) - 1 \Bigr)\\
&= \sqrt{m} \smallO\Bigl( \frac{1}{\sqrt{m}} \Bigr) = \smallO(1),
\end{align*}
and finally
\begin{equation*}
\lim_{m \to \infty} V^{m, n}(t) = \Wsk[N \leq 0] = \frac{1}{2}. \qedhere
\end{equation*}
\end{proof}

\begin{proof}[Proof of Corollary~\ref{sa:kritfall2}]
\textit{Case~\(q_1 \neq q_2\):} Here we invest Theorem~\ref{sa:zgs2}, that is,
\begin{align*}
V^{m, n}(t) &= \Wsk\biggl[ \sqrt{n} \biggl( \frac{m^{1/p_1 - 1/p_2} \, n^{1/q_1 - 1/q_2}}{(M_{q_1}^{q_2})^{1/q_2}} \lVert X^n \rVert_{p_2, q_2} - 1 \biggr)\\
&\qquad\qquad \leq \sqrt{n} \biggl( \frac{m^{1/p_1 - 1/p_2} \, n^{1/q_1 - 1/q_2}}{(M_{q_1}^{q_2})^{1/q_2}} \, \frac{r_{p_1, q_1}^{m, n}}{r_{p_2, q_2}^{m, n}} \, t - 1 \biggr) \biggr].
\end{align*}
Like in the proof of Corollary~\ref{sa:kritfall1} we use asymptotic expansion; to wit, this reads
\begin{equation}\label{eq:asymptotik_Aq1q2_mfest1}
\frac{m^{1/p_1 - 1/p_2} \, n^{1/q_1 - 1/q_2}}{(M_{q_1}^{q_2})^{1/q_2}} \, \frac{r_{p_1, q_1}^{m, n}}{r_{p_2, q_2}^{m, n}} = \begin{cases} A_{q_1, q_2} \bigl( 1 + \BigO(\frac{1}{n}) \bigr) & \text{if } q_1 < \infty, \\ A_{q_1, q_2} \bigl( 1 + \BigO\bigl( \frac{\log(n)}{n} \bigr) \bigr) & \text{if } q_1 = \infty. \end{cases}
\end{equation}
On the one hand this yields
\begin{equation*}
\lim_{n \to \infty} \frac{m^{1/p_1 - 1/p_2} \, n^{1/q_1 - 1/q_2}}{(M_{q_1}^{q_2})^{1/q_2}} \, \frac{r_{p_1, q_1}^{m, n}}{r_{p_2, q_2}^{m, n}} = A_{q_1, q_2},
\end{equation*}
and therefore the result is immediate for \(t A_{q_1, q_2} < 1\) or \(t A_{q_1, q_2} > 1\). On the other hand, for the threshold\-/case \(t A_{q_1, q_2} = 1\) Equation~\eqref{eq:asymptotik_Aq1q2_mfest1} tells us that either way we are off by at most \(\smallO(n^{-1/2})\). Thus
\begin{align*}
\sqrt{n} \biggl( \frac{m^{1/p_1 - 1/p_2} \, n^{1/q_1 - 1/q_2}}{(M_{q_1}^{q_2})^{1/q_2}} \, \frac{r_{p_1, q_1}^{m, n}}{r_{p_2, q_2}^{m, n}} \, t - 1 \biggr) &= \sqrt{n} \Bigl( t A_{q_1, q_2} \Bigl( 1 + \smallO\Bigl( \frac{1}{\sqrt{n}} \Bigr) \Bigr) - 1 \Bigr)\\
&= \sqrt{n} \smallO\Bigl( \frac{1}{\sqrt{n}} \Bigr) = \smallO(1),
\end{align*}
and the conclusion follows as in the proof of Corollary~\ref{sa:kritfall1}.

\textit{Case~\(q_1 = q_2\) and \(p_1 < \infty\):} The starting point is similar to the above,
\begin{align*}
V^{m, n}(t) &= \Wsk\biggl[ m n \biggl( 1 - \frac{m^{1/p_1}}{m^{1/p_2}} \lVert X^n \rVert_{p_2, q_1} \biggr) \geq m n \biggl( 1 - \frac{m^{1/p_1}}{m^{1/p_2}} \, \frac{r_{p_1, q_1}^{m, n}}{r_{p_2, q_1}^{m, n}} \, t \biggr) \biggr].
\end{align*}
The asymptotic reads
\begin{equation}\label{eq:asymptotik_Aq1q2_mfest2}
\frac{m^{1/p_1}}{m^{1/p_2}} \, \frac{r_{p_1, q_1}^{m, n}}{r_{p_2, q_1}^{m, n}} = 1 - \frac{(m - 1) \log(p_1/p_2)}{2 m n} + \BigO\Bigl( \frac{1}{n^2} \Bigr);
\end{equation}
this implies
\begin{equation*}
\lim_{n \to \infty} \frac{m^{1/p_1}}{m^{1/p_2}} \, \frac{r_{p_1, q_1}^{m, n}}{r_{p_2, q_1}^{m, n}} = 1 = A_{q_1, q_1},
\end{equation*}
and again the result follows easily for \(t A_{q_1, q_1} = t < 1\) or \(t > 1\). In the threshold case \(t = 1\) there follows from Equation~\eqref{eq:asymptotik_Aq1q2_mfest2},
\begin{equation*}
m n \biggl( 1 - \frac{m^{1/p_1}}{m^{1/p_2}} \, \frac{r_{p_1, q_1}^{m, n}}{r_{p_2, q_1}^{m, n}} \, t \biggr) = \frac{(m - 1) \log(p_1/p_2)}{2} + \BigO\Bigl( \frac{1}{n} \Bigr),
\end{equation*}
and therefore
\begin{equation*}
\lim_{n \to \infty} V^{m, n}(1) = \Wsk\biggl[ E + \frac{p_1 - p_2}{2 p_1} \sum_{i = 1}^{m - 1} N_i^2 \geq \frac{(m - 1) \log(p_1/p_2)}{2} \biggr].
\end{equation*}
For \(m = 1\) this simplifies to
\begin{equation*}
\lim_{n \to \infty} V^{1, n}(1) = \Wsk[E \geq 0] = 1.
\end{equation*}
In the case \(m \geq 2\) note that \(\sum_{i = 1}^{m - 1} N_i^2 \sim \Gamma(\frac{m - 1}{2}, 2)\) (the chi\=/squared distribution with \(m - 1\) degrees of freedom). Because \(E\) and \(\sum_{i = 1}^{m - 1} N_i^2\) are independent we can compute the given probability explicitly as follows, where we have to distinguish whether \(p_1 < p_2\) or \(p_1 > p_2\). Here we treat only the former in detail; the latter is done analogously. First we have
\begin{multline*}
\Wsk\biggl[ E + \frac{p_1 - p_2}{2 p_1} \sum_{i = 1}^{m - 1} N_i^2 \geq \frac{(m - 1) \log(p_1/p_2)}{2} \biggr]\\
= \int_0^\infty \frac{x^{(m - 3)/2}}{2^{(m - 1)/2} \Gamma(\frac{m - 1}{2})} \ez^{-x/2} \int_{\frac{(m - 1) \log(p_1/p_2)}{2} - \frac{(p_1 - p_2) x}{2 p_1}}^\infty \ez^{-y} \chF_{\RZ_{\geq 0}}(y) \, \dif y \, \dif x;
\end{multline*}
then for any \(x > 0\) we have \(\frac{(m - 1) \log(p_1/p_2)}{2} - \frac{(p_1 - p_2) x}{2 p_1} \geq 0\) iff \(x \geq \frac{p_1 (m - 1) \log(p_1/p_2)}{p_1 - p_2}\), and since \(p_1 < p_2\) this last expression is positive. Hence for \(x < \frac{p_1 (m - 1) \log(p_1/p_2)}{p_1 - p_2}\) the inner integral is \(1\), and else it is
\begin{equation*}
\int_{\frac{(m - 1) \log(p_1/p_2)}{2} - \frac{(p_1 - p_2) x}{2 p_1}}^\infty \ez^{-y} \chF_{\RZ_{\geq 0}}(y) \, \dif y = \ez^{-\frac{(m - 1) \log(p_1/p_2)}{2} + \frac{(p_1 - p_2) x}{2 p_1}} = \Bigl( \frac{p_2}{p_1} \Bigr)^{(m - 1)/2} \, \ez^{\frac{x}{2} - \frac{p_2 x}{2 p_1}}.
\end{equation*}
So by splitting up the outer integral we reach
\begin{align*}
\Wsk\biggl[ E + \frac{p_1 - p_2}{2 p_1} &\sum_{i = 1}^{m - 1} N_i^2 \geq \frac{(m - 1) \log(p_1/p_2)}{2} \biggr]\\
&= \int_0^{\frac{p_1 (m - 1) \log(p_1/p_2)}{p_1 - p_2}} \frac{x^{(m - 3)/2}}{2^{(m - 1)/2} \Gamma(\frac{m - 1}{2})} \ez^{-x/2} \, \dif x\\
&\quad + \int_{\frac{p_1 (m - 1) \log(p_1/p_2)}{p_1 - p_2}}^\infty \frac{x^{(m - 3)/2}}{2^{(m - 1)/2} \Gamma(\frac{m - 1}{2})} \ez^{-x/2} \Bigl( \frac{p_2}{p_1} \Bigr)^{(m - 1)/2} \, \ez^{\frac{x}{2} - \frac{p_2 x}{2 p_1}} \, \dif x;
\end{align*}
now the first summand obviously equals
\begin{equation*}
\int_0^{\frac{p_1 (m - 1) \log(p_1/p_2)}{p_1 - p_2}} \frac{x^{(m - 3)/2}}{2^{(m - 1)/2} \Gamma(\frac{m - 1}{2})} \ez^{-x/2} \, \dif x = \Gamma\Bigl( \frac{m - 1}{2}, 2 \Bigr)\Biggl( \biggl( 0, \frac{p_1 (m - 1) \log(\frac{p_1}{p_2})}{p_1 - p_2} \biggr] \Biggr),
\end{equation*}
and the second summand evaluates to
\begin{align*}
\int_{\frac{p_1 (m - 1) \log(p_1/p_2)}{p_1 - p_2}}^\infty \frac{x^{(m - 3)/2}}{2^{(m - 1)/2} \Gamma(\frac{m - 1}{2})} &\ez^{-x/2} \Bigl( \frac{p_2}{p_1} \Bigr)^{(m - 1)/2} \, \ez^{\frac{x}{2} - \frac{p_2 x}{2 p_1}} \, \dif x\\
&= \int_{\frac{p_1 (m - 1) \log(p_1/p_2)}{p_1 - p_2}}^\infty \frac{x^{(m - 3)/2}}{\bigl( \frac{2 p_1}{p_2} \bigr)^{(m - 1)/2} \Gamma(\frac{m - 1}{2})} \ez^{-\frac{x}{2 p_1/p_2}} \, \dif x\\
&= \Gamma\Bigl( \frac{m - 1}{2}, \frac{2 p_1}{p_2} \Bigr)\Biggl( \biggl( \frac{p_1 (m - 1) \log(\frac{p_1}{p_2})}{p_1 - p_2}, \infty \biggr) \Biggr),
\end{align*}
and because of \(\min\{1, \frac{p_1}{p_2}\} = \frac{p_1}{p_2}\), \(\max\{1, \frac{p_1}{p_2}\} = 1\) this equals the claimed expression.

\textit{Case~\(q_1 = q_2\) and \(p_1 = \infty\):} Now we are working with
\begin{equation*}
V^{m, n}(t) = \Wsk\biggl[ m n (1 - m^{-1/p_2} \lVert X^n \rVert_{p_2, q_1}) \geq m n \biggl( 1 - \frac{r_{\infty, q_1}^{m, n}}{m^{1/p_2} \, r_{p_2, q_1}^{m, n}} \, t \biggr) \biggr].
\end{equation*}
This time we have
\begin{equation}\label{eq:asymptotik_Aq1q2_mfest3}
\frac{r_{\infty, q_1}^{m, n}}{m^{1/p_2} \, r_{p_2, q_1}^{m, n}} = 1 - \frac{(m - 1) \log(n)}{2 m n} + \BigO\Bigl( \frac{1}{n} \Bigr),
\end{equation}
whence we see
\begin{equation*}
\lim_{n \to \infty} \frac{r_{\infty, q_1}^{m, n}}{m^{1/p_2} \, r_{p_2, q_1}^{m, n}} = 1 = A_{q_1, q_1},
\end{equation*}
and the noncritical cases follow as before. For the threshold\-/value \(t = 1\), Equation~\eqref{eq:asymptotik_Aq1q2_mfest3} results in
\begin{equation*}
m n \biggl( 1 - \frac{r_{\infty, q_1}^{m, n}}{m^{1/p_2} \, r_{p_2, q_1}^{m, n}} \biggr) = \frac{(m - 1) \log(n)}{2} + \BigO(1),
\end{equation*}
which leads us to
\begin{equation*}
\lim_{n \to \infty} V^{m, n}(1) = \Wsk\biggl[ \sum_{i = 1}^m E_i \geq \infty \biggr] = 0. \qedhere
\end{equation*}
\end{proof}

\begin{proof}[Proof of Corollary~\ref{sa:kritfall3}]
\textit{Case~\(q_1 \neq q_2\):} Referring to Theorem~\ref{sa:zgs3} we have
\begin{align*}
V^{m, n}(t) &= \Wsk\biggl[ \sqrt{m n} \biggl( \frac{m^{1/p_1 - 1/p_2}}{(M_{p_1/n}^{p_2/n} \Erw[\lVert \Theta_1 \rVert_{q_2}^{p_2}])^{1/p_2}} \lVert X^n \rVert_{p_2, q_2} - 1 \biggr)\\
&\qquad\quad \leq \sqrt{m n} \biggl( \frac{m^{1/p_1 - 1/p_2}}{(M_{p_1/n}^{p_2/n} \Erw[\lVert \Theta_1 \rVert_{q_2}^{p_2}])^{1/p_2}} \, \frac{r_{p_1, q_1}^{m, n}}{r_{p_2, q_2}^{m, n}} \, t - 1 \biggr) \biggr].
\end{align*}
Notice that in any case
\begin{equation*}
\lim_{n \to \infty} \frac{m^{1/p_1} \, n^{1/q_1} \, r_{p_1, q_1}^{m, n}}{m^{1/p_2} \, n^{1/q_2} \, r_{p_2, q_2}^{m, n}} = (M_{q_1}^{q_2})^{1/q_2} A_{q_1, q_2}
\end{equation*}
and
\begin{equation*}
\lim_{n \to \infty} \frac{1}{(M_{p_1/n}^{p_2/n} \Erw[\lVert n^{1/q_1 - 1/q_2} \Theta_1 \rVert_{q_2}^{p_2}])^{1/p_2}} = (M_{q_1}^{q_2})^{-1/q_2},
\end{equation*}
and from these follow the limit values for the cases \(t A_{q_1, q_1} < 1\) and \(t A_{q_1, q_2} > 1\). In the remaining case \(t A_{q_1, q_2} = 1\), by our assumptions the probability converges to the claimed value \(\Wsk[\sigma N \leq M] = \Phi(\inv{\sigma} \, M)\).

\textit{Case \(q_1 = q_2\) and \(p_1 < \infty\):} Again we rewrite
\begin{align*}
V^{m, n}(t) &= \Wsk\biggl[ \sqrt{m} \, n \biggl( \frac{m^{1/p_1 - 1/p_2}}{(M_{p_1/n}^{p_2/n})^{1/p_2}} \lVert X^n \rVert_{p_2, q_1} - 1 \biggr)\\
&\qquad\quad \leq \sqrt{m} \, n \biggl( \frac{m^{1/p_1 - 1/p_2}}{(M_{p_1/n}^{p_2/n})^{1/p_2}} \, \frac{r_{p_1, q_1}^{m, n}}{r_{p_2, q_1}^{m, n}} \, t - 1 \biggr) \biggr].
\end{align*}
The asymptotic expansions needed here are
\begin{equation*}
\frac{m^{1/p_1}}{m^{1/p_2}} \, \frac{r_{p_1, q_1}^{m, n}}{r_{p_2, q_1}^{m, n}} = 1 + \frac{\log(p_2/p_1)}{2 n} \Bigl( 1 - \frac{1}{m} + \BigTheta\Bigl( \frac{1}{n} \Bigr) \Bigr) + \BigO\Bigl( \frac{1}{m n^2} \Bigr)
\end{equation*}
and
\begin{equation*}
(M_{p_1/n}^{p_2/n})^{-1/p_2} = 1 - \frac{p_2 - p_1}{2 p_1 n} + \BigO\Bigl( \frac{1}{n^2} \Bigr),
\end{equation*}
they lead to the claimed results concerning \(t A_{q_1, q_2} = t < 1\) or \(t > 1\). For \(t = 1\), plugging in yields
\begin{multline*}
\sqrt{m} \, n \biggl( \frac{m^{1/p_1 - 1/p_2}}{(M_{p_1/n}^{p_2/n})^{1/p_2}} \, \frac{r_{p_1, q_1}^{m, n}}{r_{p_2, q_1}^{m, n}} \, t - 1 \biggr)\\
\begin{aligned}
&= \sqrt{m} \, n \biggl( \Bigl( 1 - \frac{p_2 - p_1}{2 p_1 n} + \BigO\Bigl( \frac{1}{n^2} \Bigr) \Bigr) \Bigl( 1 + \frac{\log(p_2/p_1)}{2 n} \Bigl( 1 - \frac{1}{m} + \BigTheta\Bigl( \frac{1}{n} \Bigr) \Bigr) + \BigO\Bigl( \frac{1}{m n^2} \Bigr) \Bigr) - 1 \biggr)\\
&= \sqrt{m} \, n \biggl( \frac{1}{2 n} \Bigl( \log\Bigl( \frac{p_2}{p_1} \Bigr) - \frac{p_2 - p_1}{p_1} - \frac{\log(p_2/p_1)}{m} + \BigTheta\Bigl( \frac{1}{n} \Bigr) \Bigr) + \BigO\Bigl( \frac{1}{m n^2} \Bigr) \biggr)\\
&= \frac{\sqrt{m}}{2} \Bigl( \log\Bigl( \frac{p_2}{p_1} \Bigr) - \frac{p_2 - p_1}{p_1} - \frac{1}{m} + \BigTheta\Bigl( \frac{1}{n} \Bigr) \Bigr) + \BigO\Bigl( \frac{1}{\sqrt{m} \, n} \Bigr);
\end{aligned}
\end{multline*}
and since the logarithm is strictly concave and \(p_1 \neq p_2\) (forced by \(q_1 = q_2\)) we know
\begin{equation*}
\log\Bigl( \frac{p_2}{p_1} \Bigr) - \frac{p_2 - p_1}{p_1} < \frac{p_2}{p_1} - 1 - \frac{p_2 - p_1}{p_1} = 0,
\end{equation*}
hence the limit as \(n \to \infty\) is minus infinity. This gives the claimed limits.

\textit{Case \(q_1 = q_2\) and \(p_1 = \infty\):} Lastly we have
\begin{align*}
V^{m, n}(t) &= \Wsk\biggl[ \sqrt{m} \, n \biggl( \frac{\lVert X^n \rVert_{p_2, q_1}}{m^{1/p_2} (M_\infty^{p_2/n})^{1/p_2}} - 1 \biggr)\\
&\qquad\quad \leq \sqrt{m} \, n \biggl( \frac{1}{m^{1/p_2} (M_\infty^{p_2/n})^{1/p_2}} \, \frac{r_{\infty, q_1}^{m, n}}{r_{p_2, q_1}^{m, n}} \, t - 1 \biggr) \biggr].
\end{align*}
As usual we expand,
\begin{align*}
\frac{r_{\infty, q_1}^{m, n}}{m^{1/p_2} \, r_{p_2, q_1}^{m, n}} &= 1 - \frac{\log(2 \pi n/p_2)}{2 n} \Bigl( 1 + \BigO\Bigl( \frac{\log(n)}{n} \Bigr) \Bigr)\\
&\quad + \frac{\log(2 \pi m n/p_2)}{2 m n} \Bigl( 1 + \BigO\Bigl( \frac{\log(n)}{n} \Bigr) \Bigr) + \BigO\Bigl( \frac{\log(m n)^2}{m^2 n^2} \Bigr)
\end{align*}
and
\begin{equation*}
(M_\infty^{p_2/n})^{-1/p_2} = 1 + \frac{1}{n} + \BigO\Bigl( \frac{1}{n^2} \Bigr),
\end{equation*}
and they account for the cases \(t A_{q_1, q_2} = t < 1\) or \(t > 1\). Lastly concerning \(t = 1\), plugging in gives us
\begin{multline*}
\sqrt{m} \, n \biggl( \frac{1}{m^{1/p_2} (M_\infty^{p_2/n})^{1/p_2}} \, \frac{r_{\infty, q_1}^{m, n}}{r_{p_2, q_1}^{m, n}} - 1 \biggr)\\
\begin{aligned}
&= \sqrt{m} \, n \biggl( \Bigl( 1 + \frac{1}{n} + \BigO\Bigl( \frac{1}{n^2} \Bigr) \Bigr) \Bigl( 1 - \frac{\log(2 \pi n/p_2)}{2 n} \Bigl( 1 + \BigO\Bigl( \frac{\log(n)}{n} \Bigr) \Bigr)\\
&\qquad\qquad\quad + \frac{\log(2 \pi m n/p_2)}{2 m n} \Bigl( 1 + \BigO\Bigl( \frac{\log(n)}{n} \Bigr) \Bigr) + \BigO\Bigl( \frac{\log(m n)^2}{m^2 n^2} \Bigr) \Bigr) - 1 \biggr)\\
&= \sqrt{m} \, n \biggl( -\frac{\log(2 \pi n/p_2)}{2 n} \Bigl( 1 + \BigO\Bigl( \frac{1}{\log(n)} \Bigr) \Bigr)\\
&\qquad\qquad\quad + \frac{\log(2 \pi m n/p_2)}{2 m n} \Bigl( 1 + \BigO\Bigl( \frac{\log(n)}{n} \Bigr) \Bigr) + \BigO\Bigl( \frac{\log(m n)^2}{m^2 n^2} \Bigr) \biggr)\\
&= \sqrt{m} \log(2 \pi n/p_2) \biggl( -\frac{1}{2} \Bigl( 1 + \BigO\Bigl( \frac{1}{\log(n)} \Bigr) \Bigr)\\
&\qquad\qquad\quad + \frac{\log(2 \pi m n/p_2)}{2 m \log(2 \pi n/p_2)} \Bigl( 1 + \BigO\Bigl( \frac{\log(n)}{n} \Bigr) \Bigr) \biggr) + \BigO\Bigl( \frac{\log(m n)^2}{m^{3/2} n} \Bigr).
\end{aligned}
\end{multline*}
Now we observe
\begin{equation*}
\frac{\log(2 \pi m n/p_2)}{m \log(2 \pi n/p_2)} = \frac{\log(m)}{m \log(2 \pi n/p_2)} + \frac{1}{m},
\end{equation*}
which converges to zero. So in total we get
\begin{equation*}
\lim_{n \to \infty} \sqrt{m} \, n \biggl( \frac{1}{m^{1/p_2} (M_\infty^{p_2/n})^{1/p_2}} \, \frac{r_{\infty, q_1}^{m, n}}{r_{p_2, q_1}^{m, n}} - 1 \biggr) = -\infty,
\end{equation*}
and the conclusion easily follows. Notice that this result is consistent with the previous case \(p_2 < p_1 < \infty\).
\end{proof}

\appendix

\section{Appendix: Higher\-/order mixed\-/norm spaces}

It is not difficult to generalize the idea of mixed norms to higher orders in the following sense. Let \(k \in \NZ\), let \(\vek{p}_k = (p_j)_{j \leq k} \in (0, \infty]^k\) and \(\vek{n}_k = (n_j)_{j \leq k} \in \NZ^k\), then on \(\RZ^{\times \vek{n}_k} := \RZ^{n_1 \times \dotsb \times n_k}\) define the \emph{\(k\)\textsuperscript{th}\-/order mixed norm with exponents \(p_1, \dotsc, p_k\)} recursively by \(\lVert \cdot \rVert_{\vek{p}_1} := \lVert \cdot \rVert_{p_1}\), and for \(k \geq 2\),
\begin{equation*}
\lVert (x_i)_{i \in \times \vek{n}_k} \rVert_{\vek{p}_k} := \bigl\lVert \bigl( \lVert (x_{i, i_k})_{i \in \times \vek{n}_{k - 1}} \rVert_{\vek{p}_{k - 1}} \bigr)_{i_k \leq n_k} \bigr\rVert_{p_k}.
\end{equation*}
We call \(\ellpe{\vek{p}_k}{\vek{n}_k} := (\RZ^{\times \vek{n}_k}, \lVert \cdot \rVert_{\vek{p}_k})\) the (real) \emph{finite\-/dimensional \(k\)\textsuperscript{th}\-/order mixed\-/norm sequence space,} then we have the recursion \(\ellpe{\vek{p}_k}{\vek{n}_k} = \ellpe{p_k}{n_k}(\ellpe{\vek{p}_{k - 1}}{\vek{n}_{k - 1}})\). Let \(\Kug{\vek{p}_k}{\vek{n}_k}\) be its closed unit ball, whose (\(n_1 \dotsm n_k\))\-/dimensional Lebesgue volume we denote by \(\KugVol{\vek{p}_k}{\vek{n}_k}\), and \(\Sph{\vek{p}_k}{\vek{n}_k}\) its unit sphere with associated cone measure \(\KegM{\vek{p}_k}{\vek{n}_k}\). Via the usual identification \((\CZ, \lvert \cdot \rvert) \cong (\RZ^2, \lVert \cdot \rVert_2)\) we may also subsume complex mixed\-/norm spaces under the real ones, to wit
\begin{equation*}
\ellpe{\vek{p}_k}{\vek{n}_k}(\CZ) \cong \ellpe{\vek{p}_k}{\vek{n}_k}(\ellpe{2}{2}) = \ellpe{(2, \vek{p}_k)}{(2, \vek{n}_k)}.
\end{equation*}
Notice that compared to the definition in Section~\ref{subsec:ellpe} we have reversed the order of indices here in order to write the recursion in a more natural manner (append new indices at end, not at beginning), so what we have notated \(\Kug{p, q}{m, n}\) there, corresponds to \(\Kug{(q, p)}{(n, m)}\) here.

The recursive structure of \(\ellpe{\vek{p}_k}{\vek{n}_k}\) lends itself well to calculate \(\KugVol{\vek{p}_k}{\vek{n}_k}\) in a recursive way as well. For the following assume \(k \geq 2\). Then we have
\begin{align*}
\KugVol{\vek{p}_k}{\vek{n}_k} &= \int_{\RZ^{\times \vek{n}_k}} \chF_{\Kug{\vek{p}_k}{\vek{n}_k}}(x) \, \dif x\\
&= \int_{(\RZ^{\times \vek{n}_{k - 1}})^{n_k}} \chF_{\Kug{\vek{p}_k}{\vek{n}_k}}\bigl( ((x_{i, i_k})_{i \in \times\vek{n}_{k - 1}})_{i_k \leq n_k} \bigr) \, \dif x.
\end{align*}
On each of the \(n_k\) component spaces introduce \(\ellpe{\vek{p}_{k - 1}}{\vek{n}_{k - 1}}\)\-/polar coordinates, i.e., \((x_{i, i_k})_{i \in \times\vek{n}_{k - 1}} = r_{i_k} \theta_{i_k}\) with \(r_{i_k} \in [0, \infty)\) and \(\theta_{i_k} \in \Sph{\vek{p}_{k - 1}}{\vek{n}_{k - 1}}\) for each \(i_k \in [1, n_k]\); therewith we get
\begin{equation*}
\lVert ((x_{i, i_k})_{i \in \times\vek{n}_{k - 1}})_{i_k \leq n_k} \rVert_{\vek{p}_k} = \bigl\lVert \bigl( \lVert r_{i_k} \theta_{i_k} \rVert_{\vek{p}_{k - 1}} \bigr)_{i_k \leq n_k} \bigr\rVert_{p_k} = \lVert (r_{i_k})_{i_k \leq n_k} \rVert_{p_k}
\end{equation*}
and hence \(\chF_{\Kug{\vek{p}_k}{\vek{n}_k}}\bigl( ((x_{i, i_k})_{i \in \times\vek{n}_{k - 1}})_{i_k \leq n_k} \bigr) = \chF_{\Kug{p_k}{n_k}}((r_{i_k})_{i_k \leq n_k})\). This yields
\begin{align*}
\KugVol{\vek{p}_k}{\vek{n}_k} &= \bigl( n_1 \dotsm n_{k - 1} \KugVol{\vek{p}_{k - 1}}{\vek{n}_{k - 1}} \bigr)^{n_k} \int_{[0, \infty)^{n_k}} \int_{(\Sph{\vek{p}_{k - 1}}{\vek{n}_{k - 1}})^{n_k}} \chF_{\Kug{p_k}{n_k}}(r) \prod_{i_k = 1}^{n_k} r_{i_k}^{n_1 \dotsm n_{k - 1} - 1} \, \dif(\KegM{\vek{p}_{k - 1}}{\vek{n}_{k - 1}})^{\otimes n_k}(\theta) \, \dif r\\
&= (\KugVol{\vek{p}_{k - 1}}{\vek{n}_{k - 1}})^{n_k} \int_{[0, \infty)^{n_k}} \chF_{\Kug{p_k}{n_k}}(r) \prod_{i_k = 1}^{n_k} \bigl( n_1 \dotsm n_{k - 1} r_{i_k}^{n_1 \dotsm n_{k - 1} - 1} \bigr) \, \dif r.
\end{align*}
Now transform \(s_{i_k} := r_{i_k}^{n_1 \dotsm n_{k - 1}}\) for each \(i_k \in [1, n_k]\); recall \(\lVert (x_i)_{i \leq n} \rVert_p = \lVert (\lvert x_i \rvert^\alpha)_{i \leq n} \rVert_{p/\alpha}^{1/\alpha}\) for any \(\alpha \in (0, \infty)\), which gives
\begin{equation*}
\lVert r \rVert_{p_k} = \lVert (r_{i_k}^{n_1 \dotsm n_{k - 1}})_{i_k \leq n_k} \rVert_{p_k/(n_1 \dotsm n_{k - 1})}^{1/(n_1 \dotsm n_{k - 1})} = \lVert s \rVert_{p_k/(n_1 \dotsm n_{k - 1})}^{1/(n_1 \dotsm n_{k - 1})},
\end{equation*}
hence \(\chF_{\Kug{p_k}{n_k}}(r) = \chF_{\Kug{p_k/(n_1 \dotsm n_{k - 1})}{n_k}}(s)\) and thus
\begin{align*}
\KugVol{\vek{p}_k}{\vek{n}_k} &= (\KugVol{\vek{p}_{k - 1}}{\vek{n}_{k - 1}})^{n_k} \int_{[0, \infty)^{n_k}} \chF_{\Kug{p_k/(n_1 \dotsm n_{k - 1})}{n_k}}(s) \, \dif s\\
&= \frac{(\KugVol{\vek{p}_{k - 1}}{\vek{n}_{k - 1}})^{n_k} \, \KugVol{p_k/(n_1 \dotsm n_{k - 1})}{n_k}}{2^{n_k}},
\end{align*}
which is the desired recursive formula. Via induction on \(k\) this leads to the explicit formula
\begin{equation*}
\KugVol{\vek{p}_k}{\vek{n}_k} = 2^{n_1 \dotsm n_k} \prod_{j = 1}^k \frac{(\KugVol{p_j/(n_1 \dotsm n_{j - 1})}{n_j})^{n_{j + 1} \dotsm n_k}}{2^{n_j \dotsm n_k}} = 2^{n_1 \dotsm n_k} \prod_{j = 1}^k \frac{\Gamma(\frac{n_1 \dotsm n_{j - 1}}{p_j} + 1)^{n_j \dotsm n_k}}{\Gamma(\frac{n_1 \dotsm n_j}{p_j} + 1)^{n_{j + 1} \dotsm n_k}}.
\end{equation*}
In the special case of \(\ellpe{(p, q)}{(m, n)}(\CZ)\) this yields
\begin{equation*}
\KugVol{(2, p, q)}{(2, m, n)} = 2^{2 m n} \, \frac{\Gamma(\frac{1}{2} + 1)^{2 m n} \, \Gamma(\frac{2}{p} + 1)^{m n} \, \Gamma(\frac{2 m}{q} + 1)^n}{\Gamma(\frac{2}{2} + 1)^{m n} \, \Gamma(\frac{2 m}{p} + 1)^n \, \Gamma(\frac{2 m n}{q} + 1)} = \pi^{m n} \, \frac{\Gamma(\frac{2}{p} + 1)^{m n} \, \Gamma(\frac{2 m}{q} + 1)^n}{\Gamma(\frac{2 m}{p} + 1)^n \, \Gamma(\frac{2 m n}{q} + 1)},
\end{equation*}
and this equals the value given in \cite[Theorem~5]{KV2017}.

\subsection*{Acknowledgement}
Michael Juhos and Joscha Prochno have been supported by the Austrian Science Fund (FWF) Project P32405 \textit{Asymptotic Geometric Analysis and Applications} and by the FWF Project F5513-N26 which is a part of the Special Research Program \emph{Quasi-Monte Carlo Methods: Theory and Applications}. Zakhar Kabluchko has been supported by the German Research Foundation under Germany’s Excellence Strategy EXC 2044~-- 390685587, \textit{Mathematics M\"unster: Dynamics~-- Geometry~-- Structure} and by the DFG priority program SPP 2265 \textit{Random Geometric Systems}.

\bibliographystyle{plain}
\bibliography{mixedlplqspaces_bib}

\end{document}